\newtheorem{definition}{Definition}
\newtheorem{lemma}[definition]{Lemma}
\newtheorem{proposition}[definition]{Proposition}
\newtheorem{remark}[definition]{Remark}
\newtheorem{theorem}[definition]{Theorem}
\newtheorem{conj}[definition]{Conjecture}
\newtheorem*{TakThm}{Takens' theorem}
\newtheorem*{KTThm}{Koopman--Takens theorem}
\newcommand{\R}{\mathbb{R}}
\newcommand{\N}{\mathbb{N}}
\newcommand{\C}{\mathbb{C}}
\newcommand{\ep}{\varepsilon}
\renewcommand{\epsilon}{\varepsilon}
\newcommand{\set}[1]{\mathbb{#1}}
\newcommand{\Meng}[2]{\left\{#1\mathrel{}\middle|\mathrel{}#2\right\}}
\newcommand{\abs}[1]{\left\lvert#1\right\rvert}
\newcommand{\norm}[1]{\left\lVert#1\right\rVert}
\newcommand{\cA}{\mathcal{A}}
\newcommand{\cK}{\mathcal{K}}
\newcommand{\cU}{\mathcal{U}}
\newcommand{\bT}{\mathbb{T}}
\newcommand{\Gd}{{\text{G}$_{\delta}$}}
\newcommand{\mpt}{\mathrm{MPT}}
\newcommand{\refl}{\mathcal{R}}
\newcommand{\FT}{\mathcal{F}}
\newcommand{\homeo}{\mathrm{Homeo}}
\newcommand{\revision}[1]{{#1}}
\title{A Koopman--Takens theorem: Linear least squares prediction of nonlinear time series}
\author[1,2]{P\'eter~Koltai}
\author[3]{Philipp Kunde}
\affil[1]{Department of Mathematics, University of Bayreuth, Germany}
\affil[2]{Institute of Mathematics, Freie Universit\"at Berlin, Germany}
\affil[3]{Faculty of Mathematics and Computer Science, Jagiellonian University in Kraków, Poland}
\date{}
\begin{document}
	
	\maketitle
	
	\begin{abstract}
		The least squares linear filter, also called the Wiener filter, is a popular tool to predict the next element(s) of time series by linear combination of time-delayed observations. We consider observation sequences of deterministic dynamics, and ask: Which pairs of observation function and dynamics are predictable? If one allows for nonlinear mappings of time-delayed observations, then Takens' well-known theorem implies that a set of pairs, large in a specific topological sense, exists for which an exact prediction is possible. We show that a similar statement applies for the linear least squares filter in the infinite-delay limit, by considering the forecast problem for invertible measure-preserving maps and the Koopman operator on square-integrable functions.
		
		\bigskip
		\textbf{Keywords:} least squares filter, Wiener filter, Hankel DMD, Takens' theorem, Koopman operator, cyclic vector
		
		\bigskip
		\textbf{MSC classification:} 
		37A30, 
		37A46, 
		37C20, 
		37M10, 
		37M25, 
		47A16. 

	\end{abstract}


	\section{Introduction}
	
	The prediction (or forecast or extrapolation) of time series is important in diverse applications~\cite{HBS,HsiehEtAl,Fish,SugiharaEtAl,SugiharaMay} and attracts lively research activity~\cite{BGS20,BGS22,baranski2022prediction,FS87,KostelichYorke,SaYoCa91,packard1980geometry,Takens,Voss,WangGuet2022}.
	It can be framed as follows. Given a discrete-time dynamics $T:\set{X} \to \set{X}$ on state space $\set{X}$ and observation function $f:\set{X}\to\C$, are we able to assess from the time series $f(x), f(Tx), \ldots, f(T^n x)$ its next element~$f(T^{n+1}x)$?
	
	We are going to consider for this problem the linear least (mean) squares filter~\cite[section~9.7]{percival1993spectral}. Going back to Wiener~\cite{wiener1949extrapolation} and Kolmogorov~\cite{kolmogorov1941stationary,kolmogorov1941interpolation}, it is also called \emph{Wiener filter}. It computes linear combination coefficients for $d$ consecutive observations $f(T^ix), \ldots, f(T^{i+d-1}x)$ such that the square error of the linear combination to the next observations in the sequence, $f(T^{i+d}x)$, averaged over the sequence is minimal. The positive integer $d$ is called \emph{delay depth}.
	
	If we were to allow nonlinear transformations to map the time-delayed observations,
	Takens' celebrated theorem \cite{Takens} implies that an exact prediction is possible for a ``large'' set of pairs of map $T$ and observable~$f$:
	\begin{TakThm}
		Let $\set{M}$ be a compact manifold of dimension $m$. The set of pairs of mappings and observables $(T,f)\in \mathrm{Diff}^2(\set{M}) \times C^2(\set{M},\R)$, for which the delay-coordinate map 
		\[
		\Phi_{T,f}:\set{M} \to \R^{2m+1}, \ \Phi_{T,f}(x)=\left(f(x),f(Tx), \ldots , f(T^{2m}x)\right)
		\]
		is an embedding, is open and dense with respect to the $C^1$ topology.
	\end{TakThm}

	To be more precise, exact predictability is implied by the consequence of Takens' theorem that the mapping $\left(f(T^ix), \ldots, f(T^{i+d-1}x) \right) \mapsto \left(f(T^{i+1}x), \ldots, f(T^{i+d}x) \right)$ with $d=2m+1$ is one-to-one as the composition $\Phi_{T,f}(x) \mapsto x \mapsto Tx \mapsto \Phi_{T,f}(Tx)$ of (smooth) one-to-one mappings.
	Note that a part of this pipeline is the \emph{reconstruction} step $\Phi_{T,f}(x) \mapsto x$, possible by the injectivity of
	the map $\Phi_{T,f}$.
	Closer to our way of looking at the problem, \smash{\cite{baranski2022prediction}} considers the (less restrictive) problem of predicting from $d$ consecutive terms of the time series its future values. Notice that such \emph{prediction} takes place not in the original phase space $\set{M}$, but in $\Phi_{T,f}(\set{M})\subset \R^d$, considered as a model space for the system. See also~\cite{Takens2}. 
	
	The descriptor ``large'' from above refers to the adjectives `open and dense' from Takens' theorem---often called genericity in this context. It is customary to call a property generic if it holds for all points of a set that is large in a topological sense. This largeness, however, has no unique description throughout the literature. Indeed, in most cases the word ``generic'' refers to a property that holds for a set of points containing a dense \Gd{} set. Here, a set is called~{\Gd} if it is a countable intersection of open sets, cf.\ Definition~\ref{def:Gd} below. Note that this is slightly weaker than the genericity used in Takens' theorem itself.
	
	Takens' theorem has been extended over the years in different directions, such as less smoothness of the setting~\cite{SaYoCa91,BGS20,BGS22,Gutman,Robinson} (see also \cite{robinson_Book,baranski2022prediction} for a comprehensive overview) or stronger concepts for the largeness of the set for which it holds. One such concept is \emph{prevalence}~\cite{SaYoCa91}, which is a generalization of ``Lebesgue-almost every'' for infinite-dimensional normed spaces; cf.\ Definition~\ref{def:prevalence}.
	
	Takens-type theorems serve as a justification of the validity of time-delay based approaches used in applications, and have been the basis of further methodological developments, see e.g.~\cite{broomhead1986extracting,FS87,SugiharaMay,SugiharaEtAl,KostelichYorke,HsiehEtAl,dellnitz2016computation,brunton2017chaos,Fish,kamb2020time,gottwald2021combining}.
	
	At this point the prediction problem seems to reduce to learning the scalar function $\left(f(x), \ldots, f(T^{d-1}x) \right) \mapsto f(T^dx)$ for a sufficiently large delay depth. However, this function is nonlinear and the curse of dimensionality becomes a burden, as no additional exploitable structural property of it is known. Although, for instance, deep-learning based approaches have been deployed to this learning problem with success~\cite{lapedes1987nonlinear,young2023deep,bakarji2023discovering,WangGuet2022}, training such artificial neuronal networks remains a nontrivial optimization problem and performance guarantees are not available. 
	\revision{Similar difficulties arise for ``composite'' methods, where for a fixed set of nonlinear ansatz functions a linear combination optimizing the forecast error is sought~\cite{gottwald2021supervised,gottwald2021combining,wulkow2021data} and for the related family of regression methods based on the Mori--Zwanzig formalism~\cite{zwanzig1961statistical,Mori65}, utilizing a projection-based quantification of how the unobserved past propagates to the present and future~\cite{gouasmi2017apriori,lin2021data,gilani2021lernel,lin23regression}.}
	Due to its simplicity and (also theoretic) accessibility, the linear filter remains a viable tool for prediction. In addition, as we will see below, the forecast problem is all about approximating a linear operator.
	
	Indeed, we interpret the prediction problem by the linear least squares filter in the setting of ergodic theory, with an essentially invertible measure-preserving transformation $T: (\set{X},\mathfrak{B}, \mu) \to (\set{X},\mathfrak{B}, \mu)$ and its associated Koopman operator~$\cU :L^2_\mu \to L^2_\mu$ with~$\cU f = f\circ T$~\cite{koopman1931hamiltonian,koopman1932dynamical}, often also called composition operator.
	We will see in Proposition~\ref{prop:L2projU} below that the least squares filter in the limit of infinite data corresponds to an orthogonal projection of $\cU$ to the Krylov subspace~$\cK_d = \mathrm{span}\{ f\circ T^{-d+1}, \ldots, f\circ T^{-1}, f\} \subset L^2_{\mu}$ spanned by time-delayed observables. 
	
	Without an explicit mentioning of it, a first connection of the linear least squares filter and the Koopman operator in this projective sense seems to have appeared in~\cite{arbabi2017ergodic}. Therein, the focus was on the connection to a more recent linear method, Dynamic Mode Decomposition (DMD), which arose in the context of fluid dynamics~\cite{SS08}. The relationship of DMD and the Koopman operator was observed in~\cite{rowley2009spectral}, after which generalizations such as Extended DMD~\cite{williams2015data} appeared. This led to the reintroduction of the Wiener filter with the name ``Hankel DMD'' in~\cite{arbabi2017ergodic}.
	
	The marriage of Koopman operator and delay embedding has since then witnessed a lively activity. Intermittency in chaotic systems was investigated by the ``Hankel alternative view of Koopman analysis''~\cite{brunton2017chaos,kamb2020time}, spectral approximation results of Hankel DMD have been derived in~\cite{korda2020data}, while others have also sought for spectral approximation beyond the linear filter scenario~\cite{GiDa19,korda2020data,giannakis2021delay,colbrook2021rigorous,colbrook2023mpedmd,valva2023consistent}.
	
	Note that prediction requires the inference of $\cU f(x)$ from past observations \linebreak[4] $\ldots, f\circ T^{-2}(x), f\circ T^{-1}(x), f(x)$. The usual trade-off in Koopman-operator based methods is that the original nonlinear but (potentially) finite-dimensional system is considered through a linear, but infinite-dimensional operator. It is thus not surprising that in general perfect prediction by linear manipulations of time-delayed observations cannot be achieved, merely in the limit of infinite delay depth, i.e.\ ``asymptotically''.
	We will consider two notions of \emph{asymptotic linear predictiveness}. The first is (strong) predictiveness, requiring that the closure $\overline{\bigcup_{d\in \N}\cK_d} = L^2_{\mu}$; i.e., that the Krylov spaces ``fill''~$L^2_{\mu}$, cf.\ Definition~\ref{def:predictability}. The second, weak predictiveness, we define by~$\smash{ \cU f \in \overline{\bigcup_{d\in \N}\cK_d} }$, cf.\ Definition~\ref{def:weakpred}.
	With regard to the above discussion after Takens' theorem, we note that the stronger version of predictiveness here is closer to the notion of reconstruction (of the Koopman operator), while the weaker one is in line with what one considered above as prediction in model space. 
	
	In the following we consider the Wiener filter and our results will mainly concern (strong) linear asymptotic predictiveness. We provide some Takens-type predictiveness theorems where the prediction map is linear. Using the notions of standard probability space without atoms (that is, a measure space isomorphic to $[0,1]$ with Lebesgue measure) and weak topology on measure-preserving essential bijections from Section~\ref{subsec:basicsMT}, our main results are summarized in a simplified fashion as follows: 
	\begin{KTThm}
		Let $(\set{X},\mathcal{B},\mu)$ be a standard probability space without atoms and let $\mpt$ be the space of $\mu$-preserving essential bijections on $\set{X}$ with the weak topology.
		\begin{enumerate}[(a)]
			\item Theorem~\ref{thm:DensePrevalent}: For a dense set of transformations $T \in \mpt$, the collection of asymptotically linearly predictive observables~$f$ is prevalent and generic in $L^2_{\mu}$.
			\item Proposition~\ref{prop:DenseGd}: For a generic set of transformations $T \in \mpt$, the collection of asymptotically linearly predictive observables~$f$ is generic in $L^2_{\mu}$.
			\item Proposition~\ref{prop:DiscSpecWeakPredict}: For a dense set of transformations $T \in \mpt$, every $L^2_\mu$-observable $f$ is weakly linearly asymptotically predictive. Specifically, the conclusion holds for all transformations $T$ with discrete spectrum.
		\end{enumerate}
	\end{KTThm}
	Practically this means that for a large set of systems almost any measurement procedure (i.e., observable) results in a predictive scenario. However, it also means, that on finite delay depths the Wiener filters of predictive and of unpredictive systems can be arbitrary close (Proposition~\ref{prop:filterapprox}).
	A result of our analysis is also that there are mixing systems that are predictive (section~\ref{subsec:RankOne}). Since mixing systems ``lose their memory'' over time, it was not clear a priori whether observations from arbitrary far in the past can contribute enough for this to be the case.
	
	We can compare the Takens and Koopman--Takens theorems along different lines:\\
	(i) While for Takens' theorem the (often unknown) dimension of the manifold $\set{M}$ provides a bound on the minimal delay depth for predictability to hold, in the Koopman--Takens setting statements hold in the asymptotic limit of infinite delay depth. In fact, in this latter ergodic setting the structure of a manifold is not needed; we will only require Lebesgue--Rokhlin spaces below (section~\ref{subsec:basicsMT}). \revision{However, in section~\ref{subsec:homeo} we can extend our results to the setting of measure-preserving homeomorphisms on compact connected manifolds.}\\
	(ii) Takens-type results require the mapping and the observable to be at least (sometimes Lipschitz) continuous, while our framework works with measurable maps and observables.
	It should also be observed that the prediction problem can be considered for both invertible and non-invertible transformations~$T$, while the possibility of dynamical reconstruction (embedding) by delay coordinates maps is generally restricted to invertible systems in the Takens framework. It is important to note that from a theoretical point of view, reconstruction implies prediction but not vice versa. In the Koopman--Takens framework we only consider essentially invertible maps.\\
	(iii) For Takens' theorem, one can identify a class of $T\in\mathrm{Diff}^2(\set{M})$ such that the conclusion of the theorem holds: If $T$ has finitely many periodic points of period at most $2m$ and for any periodic point $x$ of period $p\leq 2m$ the eigenvalues of $\mathrm{D}T^p(x)$ are all distinct, then for generic $f \in C^2(\set{M},\R)$ the map $\Phi_{T,f}$ is an embedding~\cite[Theorem~2]{huke2006embedding}.
	In our setting, one can also give a class of systems for which weak predictiveness holds: Mappings with discrete spectrum are weakly predictive for \emph{every} $L^2_\mu$-observable (see Proposition~\ref{prop:DiscSpecWeakPredict}).
	
	The paper is structured as follows.
	Section~\ref{sec:filter} sets up the prediction problem in an ergodic-theoretic context and shows that an important role is played by so-called cyclic vectors of the Koopman operator. The size of the set of predictive pairs of dynamics and observables is considered in section~\ref{sec:genericity}, where we give different genericity results for predictiveness---both of constructive and nonconstructive nature. Further practical properties of the least squares linear filter for time series analysis are collected in section~\ref{sec:filter_properties}. We illustrate our findings on four numerical examples in section~\ref{sec:examples}, before we conclude with section~\ref{sec:outlook}.

	\section{The least-squares filter}
	\label{sec:filter}
	
	\subsection{Preliminaries on operator-based considerations in ergodic theory}
	
	The measure-theoretic consideration of dynamical systems starts with a measure space $(\set{X},\mathfrak{B}, \mu)$ and a measure-preserving transformation (mpt)~$T:\set{X}\to \set{X}$, i.e.,~$\mu = \mu\circ T^{-1}$. If the $\sigma$-algebra $\mathfrak{B}$ is clear from the context, we suppress it in the notation, and write~$(\set{X},\mu)$. We will work with probability spaces, thus~$\mu(\set{X})=1$.
	
	The system (or, if no ambiguity arises, the \emph{dynamics} or the measure) is called ergodic if there are only trivial invariant sets, that is, for every set $E\in \mathcal{B}$ with $\mu(T^{-1}(E) \triangle E)=0$ we have $\mu(E)=0$ or $\mu(E)=1$. Every system can be restricted to subsets $\set{X}' \subset \set{X}$, such that $T\big\vert_{\set{X}'}$ is ergodic; so-called ergodic components.
	For ergodic systems, the Birkhoff Ergodic Theorem states that for $f\in L^p_\mu := L^p_\mu(\set{X})$, $1\le p < \infty$, the orbital averages converge, more precisely
	\[
	\lim_{n\to \infty} \frac1n \sum_{i=0}^{n-1} f\left( T^i x\right) = \int_{\set{X}} f\,d\mu
	\]
	for $\mu$-a.e.\ $x\in\set{X}$ and also as a function in~$L^p_\mu$. Note that this is a functional version of the indecomposability statement defining ergodicity: The orbital averages of any observables converge to a constant. Again, if $T$ is not ergodic, this statement holds restricted to its ergodic components. 
	
	This functional view on ergodic theory (also) motivates to consider observables and their evolution under the dynamics, giving rise to the Koopman operator~$\cU: L^2_\mu\to L^2_\mu$, $\cU f = f\circ T$. The analogously defined operator could be considered on $L^p_\mu$, $1\le p\le \infty$, as well, but the additional structure of a Hilbert space and the possibility to consider autocorrelations leads to our choice of~$L^2_\mu$. Sometimes, to omit ambiguity, we write~$\cU_T$ to denote the Koopman operator associated to~$T$. Since $T$ preserves the measure, we have that $\smash{\| \cU f\|_{L^2_\mu} = \|f\|_{L^2_\mu} }$, and if $T$ is (essentially) invertible, then $\cU$ is \emph{unitary}. Then, in particular, $\cU_{T^{-1}} = \cU_T^{-1}$ and the spectrum of $\cU$ is contained in~$\{z\in\C \mid |z|=1\}$, since both $\cU$ and $\cU^{-1}$ are non-expansive.
	This spectrum can be further classified as follows. The mpt $T$ (or the operator $\cU$) is said to have (purely) \emph{discrete spectrum}, if $\cU$ has a countable set of eigenvalues with associated eigenfunctions yielding an orthonormal basis $\phi_i$, $i\in\N$, of~$L^2_\mu$. Note that $\lambda_1=1$ is always an eigenvalue of $\cU$ with eigenfunction~$\phi_1 = \mathds{1}$ (the constant function with value one). If 1 is the only eigenvalue of $\cU$, then $T$ (and $\cU$) is said to have (purely) \emph{continuous spectrum}. This is for instance the case for mixing systems, while rotations of abelian groups have purely discrete spectrum~\cite[\S3.3]{Wal00}. One speaks of $T$ having mixed spectrum if neither of the above pure cases hold. We note that the terminology is consistent with the functional-analytic partitioning of spectra into the point, continuous, and residual parts. Indeed, unitary operators have no residual spectrum: one way to see this known fact is to combine \cite[Theorem~9.2-4]{kreyszig1991introductory} and \cite[section~10.6]{kreyszig1991introductory}.
	
	If $f,g \in L^2_\mu$, then $f\, \cU^k g \in L^1_\mu$ for $k\ge 0$, and the Birkhoff Ergodic Theorem yields
	\begin{equation}
		\label{eq:tuples}
		\lim_{n\to \infty} \frac1n \sum_{i=0}^{n-1} f\left( T^i x\right) g\left( T^{i+k} x\right) = \lim_{n\to \infty} \frac1n \sum_{i=0}^{n-1} (f\, g\circ T^k)\left( T^i x\right) =  \int_{\set{X}} f\, \cU^k g \,d\mu
	\end{equation}
	for $\mu$-a.e.~$x\in \set{X}$.
	
	Finally, the form of the observation sequence suggests to consider sequences of the form~$(f,\cU f, \cU^2 f,\ldots)$.
	If the so-called \emph{cyclic subspace} of $f$, defined by 
	\[
	C_f := C_{f,\,\cU} := \overline{\mathrm{span} \{ f,\cU f, \cU^2 f,\ldots\}},
	\]
	is equal to~$L^2_\mu$, then we call $f$ a \emph{cyclic vector} of~$\cU$. As we will see, such cyclic observables play an important role for the prediction problem.

	\subsection{The filter as $L^2$-orthogonal projection}
	
	In all what follows, we assume $T$ to be a $\mu$-ergodic essentially invertible mpt on~$\set{X}$. Let a trajectory
	\[
	x,Tx,\ldots, T^mx
	\]
	of length $(m+1)$ be given. For a given delay depth $d\in\N$, let us consider the linear least squares filtering problem for forecasting the next observation from the previous $d$ ones:
	\begin{equation}
		\label{eq:filter_finite}
		\tilde{J}_m(\tilde{c}) := \frac{1}{m-d+1}\sum_{t=0}^{m-d} \abs{f(T^{t+d}x) - \sum_{i=0}^{d-1} \tilde{c}_i f(T^{t+i}x) }^2 \to \min_{\tilde{c}\in\C^{d}}!,
	\end{equation}
	where $\to\min_{\tilde{c}\in\C^{d}}!$ expresses the goal of minimizing $J(\tilde{c})$ over $\tilde{c}=(\tilde{c}_0,\dots,\tilde{c}_{d-1})\in\C^{d}$.
	For $\mu$-a.e.\ $x\in \set{X}$ the value function converges as $m\to\infty$ by~\eqref{eq:tuples} and we obtain the limiting filtering problem
	\begin{equation}
		\label{eq:filter}
		\tilde{J}(\tilde{c}) := \int \abs{ f(T^d x) - \sum_{i=0}^{d-1} \tilde{c}_i f(T^ix) }^2 \, d\mu(x) \to \min_{\tilde{c}\in\C^{d}}!
	\end{equation}
	Replacing $\tilde{J}_m$ by $\tilde{J}$ is reasonable in the following sense.
	\begin{lemma}
		\label{lem:filter_convergence}
		Let $d\in\N$ be fixed and let $\{f, f\circ T,\ldots,f\circ T^{d-1}\}$ be a linearly independent set in~$L^2_\mu$. Then the minimizer of $\tilde{J}_m$ converges for almost every~$x\in\set{X}$ to the unique minimizer of $\smash{\tilde{J}}$ for~$m\to\infty$.
	\end{lemma}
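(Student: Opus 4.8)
The plan is to reduce the assertion to the Birkhoff Ergodic Theorem together with the continuity of the least-squares solution map in terms of its data, i.e.\ the relevant Gram matrix and cross-correlation vector.

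First I would pass to normal equations. For fixed $d$ and every $m\geq d$ the map $\tilde J_m$ of~\eqref{eq:filter_finite} is a convex quadratic functional of $\tilde c\in\C^{d}$ whose minimizers are exactly the solutions of the normal equations $G_m(x)\,\tilde c=b_m(x)$, where
\[
\big(G_m(x)\big)_{ij} \;=\; \frac{1}{m-d+1}\sum_{t=0}^{m-d} f(T^{t+j}x)\,\overline{f(T^{t+i}x)},\qquad 0\le i,j\le d-1,
\]
and $(b_m(x))_i$ is given by the same formula with $f(T^{t+d}x)$ in place of $f(T^{t+j}x)$. Likewise, the minimizers of $\tilde J$ of~\eqref{eq:filter} are precisely the solutions of $G\,\tilde c=b$ with $G_{ij}=\langle f\circ T^{j},f\circ T^{i}\rangle_{L^2_\mu}$ and $b_i=\langle f\circ T^{d},f\circ T^{i}\rangle_{L^2_\mu}$; this is just the characterization of the $L^2_\mu$-orthogonal projection of $f\circ T^{d}$ onto $\mathrm{span}\{f,\dots,f\circ T^{d-1}\}$. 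By hypothesis this spanning set is linearly independent, so $G$ is positive definite (a Gram matrix of linearly independent vectors), hence invertible, and $\tilde J$ has the unique minimizer $\tilde c^{\star}=G^{-1}b$.

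Next I would recognise $G_m$ and $b_m$ as ergodic averages: $G_m(x)=\tfrac{1}{m-d+1}\sum_{t=0}^{m-d}F(T^{t}x)$ and $b_m(x)=\tfrac{1}{m-d+1}\sum_{t=0}^{m-d}\beta(T^{t}x)$ for the matrix- and vector-valued functions $F(x)_{ij}=(\cU^{j}f)(x)\,\overline{(\cU^{i}f)(x)}$ and $\beta(x)_i=(\cU^{d}f)(x)\,\overline{(\cU^{i}f)(x)}$, all of whose entries lie in $L^1_\mu$ by Cauchy--Schwarz and $f\in L^2_\mu$ (cf.~\eqref{eq:tuples}). Applying the Birkhoff Ergodic Theorem to each of the finitely many scalar entries and intersecting the corresponding full-measure sets, we obtain that $G_m(x)\to G$ and $b_m(x)\to b$ as $m\to\infty$ for $\mu$-a.e.\ $x$.

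It then remains to invoke the continuity of matrix inversion. Fix such an $x$. Since $G$ is invertible and the invertible $d\times d$ matrices form an open set on which inversion is continuous, there is an $m_{0}(x)$ with $G_m(x)$ invertible---and hence, being a Gram matrix, positive definite---for all $m\geq m_{0}(x)$; for those $m$ the functional $\tilde J_m$ is strictly convex with the unique minimizer $\tilde c_m(x)=G_m(x)^{-1}b_m(x)$, and $\tilde c_m(x)\to G^{-1}b=\tilde c^{\star}$. I do not expect a genuine obstacle here; the lemma is a routine combination of the Birkhoff Ergodic Theorem with finite-dimensional least squares. The only points that deserve care are (i) realising $G_m$ \emph{directly} as a Birkhoff average over the shifts $x\mapsto T^{t}x$, so that the summation ranges match and no boundary corrections of the form $\tfrac1m f(T^{m}x)\,\overline{f(T^{m+k}x)}$ arise (these would vanish $\mu$-a.e.\ by a standard telescoping argument from the Birkhoff theorem, but it is cleaner to avoid them), and (ii) that uniqueness of the minimizer of $\tilde J_m$ is only an \emph{eventual} property---for $m$ with $m-d+1<d$ the matrix $G_m$ is necessarily singular---which is harmless since the claim concerns the limit $m\to\infty$.
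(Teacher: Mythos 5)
Your proposal is correct and takes essentially the same route as the paper: both reduce the problem to the normal equations of the quadratic loss, use linear independence to get a positive definite limiting Gram matrix (Hessian), invoke the Birkhoff Ergodic Theorem for the finitely many matrix and vector entries, and conclude via continuity of $(G,b)\mapsto G^{-1}b$ on invertible matrices. Your write-up merely spells out the Gram-matrix details and the eventual invertibility of $G_m$ more explicitly than the paper does.
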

	\begin{proof}
		The linear independence assumption guarantees that $\tilde{J}:\C^d \to \R$ is a quadratic loss function with symmetric positive definite $d\times d$ Hessian matrix $H$. Thus, $\tilde{J}$ has a unique minimizer, that can be characterized as the unique solution of a linear system $H \tilde{c} = b$ with a suitable right-hand side~$b$. Also, $\tilde{J}_m:\C^d\to\R$ is a quadratic loss function, whose finitely many parameters converge by the Birhoff Ergodic Theorem for a.e.\ $x\in\set{X}$ to those of~$\tilde{J}$. Thus, $\tilde{J}_m$ has almost surely a unique minimizer for sufficiently large~$m$, which converges essentially because $(H,b) \mapsto H^{-1}b$ is continuous for invertible matrices~$H$.
	\end{proof}
	
	From now on, we will thus consider the infinite-data case. More precisely, we will work with the following equivalent formulation of problem~\eqref{eq:filter}. Note that by invariance of $\mu$ with respect to $T$ and hence with respect to $T^{d-1}$ as well, we have with $c_i := \tilde{c}_{d-1-i}$ that
	\begin{align}	
		\tilde{J}(\tilde{c}) &\stackrel{\phantom{j=d-1-i}}{=} \int \abs{ f(T x) - \sum_{i=0}^{d-1} \tilde{c}_i f(T^{i-d+1}x) }^2 \, d\mu(x) \nonumber\\
		&\stackrel{j=d-1-i}{=} \int \abs{ f(T x) - \sum_{j=0}^{d-1} c_j f(T^{-j} x) }^2 \, d\mu(x)	=: J(c). \label{eq:filter_new}
	\end{align}
	We note that at this point it is convenient to write the (finite) trajectory, by redefining its starting point, as
	\[
	T^{-m}x, \ldots, T^{-1}x,x,
	\]
	whenever we interpret the ergodic limit in the finite-data case.
	
	To interpret the minimizer of $J$ in~\eqref{eq:filter_new}, we define the Krylov ``basis'' $\mathcal{B}_d:= (f,  f\circ T^{-1}, \ldots, f\circ T^{-d+1})$ and the associated Krylov space $\mathcal{K}_d := \mathrm{span}\, \mathcal{B}_d$. The Krylov basis is a basis of $\mathcal{K}_d$ if and only if it forms a linearly independent set.
	
	\begin{remark}
		Note that if $f$ is a cyclic vector of $\cU^{-1}$, then $\mathcal{B}_d$ is a basis for any~$d\ge 1$, assuming that $L^2_\mu$ is infinite-dimensional.
	\end{remark}
	
	Since $J(c) \to \min!$ is the best approximation problem in $L^2_\mu$, the minimizer $c = (c_0,\ldots,c_{d-1})$ of $J$ induces a function
	\[
	\cU_d f:= \sum_{j=0}^{d-1} c_j f \circ T^{-j} \in \mathcal{K}_d
	\]
	that is the unique $L^2_{\mu}$-orthogonal projection of $\cU f$ onto~$\mathcal{K}_d$. There will be no confusion of notation between $\cU_T$ and $\cU_d$, as $T$ will always be a map and $d$ an integer.
	If $\mathcal{B}_d$ is a basis, the associated coefficient vector $c$ is unique. The orthogonal projection property is equivalent to the variational equation (by setting the gradient of $J$ to zero)
	\begin{equation}
		\label{eq:vareq}
		\langle \cU_d f - \cU f, f\circ T^{-i} \rangle_{L^2_\mu} = 0\qquad \forall i=0,\ldots,d-1.
	\end{equation}
	We can extend the operator $\cU_d$ to $\mathcal{K}_d$ by defining it to have the matrix representation
	\begin{equation}
		\label{eq:Ud}
		U_d = \begin{bmatrix}
			c_0 & 1 & & & \\
			c_1 & 0 & 1 & & \\
			\vdots & & \ddots & \ddots & \\
			c_{d-2} & & & 0 & 1 \\
			c_{d-1} & & & & 0
		\end{bmatrix}
	\end{equation}
	with respect to $\mathcal{B}_d$, where the first column is the vector $c$, the first upper diagonal contains ones, and all other entries are zero. By setting up the variational equation for the $L^2_\mu$-orthogonal projection of the Koopman operator $\cU$ on the space $\mathcal{K}_d$, it is immediate to see the following 
	\begin{proposition}
		\label{prop:L2projU}
		The operator $\cU_d: \mathcal{K}_d \to \mathcal{K}_d$ with matrix representation $U_d$ with respect to $\mathcal{B}_d$ is the $L^2_\mu$-orthogonal projection of $\cU$ onto~$\mathcal{K}_d$.
	\end{proposition}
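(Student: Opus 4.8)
The plan is to make explicit what is meant by ``the $L^2_\mu$-orthogonal projection of $\cU$ onto $\mathcal K_d$'': writing $P_d:L^2_\mu\to\mathcal K_d$ for the orthogonal projection onto the (finite-dimensional, hence closed) subspace $\mathcal K_d$, this operator is the \emph{compression} $P_d\cU|_{\mathcal K_d}:\mathcal K_d\to\mathcal K_d$. Since both $P_d\cU|_{\mathcal K_d}$ and $\cU_d$ are linear operators on $\mathcal K_d$, it suffices to check that they coincide on the basis $\mathcal B_d=(f,f\circ T^{-1},\ldots,f\circ T^{-d+1})$ (which is linearly independent under the standing hypothesis that makes the matrix representation $U_d$ with respect to $\mathcal B_d$ meaningful in the first place), and I will do this column by column of $U_d$.

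First I would treat the ``shift'' columns, i.e.\ the basis vectors $f\circ T^{-j}$ with $1\le j\le d-1$. One has $\cU(f\circ T^{-j})=f\circ T^{-j}\circ T=f\circ T^{-(j-1)}$, which already belongs to $\mathcal K_d$; hence $P_d\cU(f\circ T^{-j})=f\circ T^{-(j-1)}$. Reading off the $j$-th column of $U_d$ in \eqref{eq:Ud}, which for $j\ge 1$ is the unit vector with a single $1$ in the $(j-1)$-st slot, gives $\cU_d(f\circ T^{-j})=f\circ T^{-(j-1)}$ as well. So the two operators agree on $f\circ T^{-1},\ldots,f\circ T^{-d+1}$.

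Next I would treat the first column, i.e.\ the basis vector $f$ itself. Here $\cU f=f\circ T$ need not lie in $\mathcal K_d$, so $P_d\cU f$ is the genuine best approximation of $f\circ T$ in $\mathcal K_d$. By construction this is exactly the element $\cU_d f=\sum_{j=0}^{d-1}c_j f\circ T^{-j}$ built from the minimizer $c$ of $J$ in \eqref{eq:filter_new}: optimality of $c$ is equivalent to the normal (Galerkin) equations \eqref{eq:vareq}, $\langle\cU_d f-\cU f,f\circ T^{-i}\rangle_{L^2_\mu}=0$ for $i=0,\ldots,d-1$, which is precisely the statement $\cU_d f=P_d\cU f$. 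Since $\cU_d f$ is also what the first column $c$ of $U_d$ produces, the two operators agree on $f$ too, hence on all of $\mathcal K_d$, which is the claim.

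I do not expect a genuine obstacle here; the proof is essentially bookkeeping. The points that deserve care are: the index reversal $c_i=\tilde c_{d-1-i}$ used to pass from \eqref{eq:filter} to \eqref{eq:filter_new}, so that the Krylov basis is ordered as $f,f\circ T^{-1},\ldots$ and $U_d$ comes out as the companion-type matrix \eqref{eq:Ud}; the observation that every column of $U_d$ except the first implements the one-step shift $f\circ T^{-j}\mapsto f\circ T^{-(j-1)}$, which matches $\cU$ on those vectors precisely because they map back into $\mathcal K_d$; and the linear independence of $\mathcal B_d$, without which $U_d$ would not well-define $\cU_d$ (in that degenerate case one would simply take $\cU_d:=P_d\cU|_{\mathcal K_d}$ as the definition, and there would be nothing to prove).
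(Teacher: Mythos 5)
Your proof is correct and follows essentially the same route as the paper, which treats the claim as immediate from the variational (normal) equations \eqref{eq:vareq}: the shift columns agree because $\cU$ maps $f\circ T^{-j}$, $j\ge 1$, back into $\mathcal{K}_d$, and the first column agrees because the minimizer $c$ of $J$ in \eqref{eq:filter_new} is exactly the best-approximation/Galerkin solution. Your column-by-column bookkeeping merely makes explicit what the paper leaves as ``immediate.''
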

	
	\begin{remark}[Hankel DMD]
		A statement similar to Proposition~\ref{prop:L2projU} has been made in~\cite[section~3]{arbabi2017ergodic} and was used there in the case~$\cK_{d+1} = \cK_d$ for some~$d\in\N$. Therein, the filter was coined \emph{Hankel Dynamic Mode Decomposition (Hankel DMD)}, because it arises as the least squares solution of the (usually overdetermined) linear system
		\begin{equation}
			\label{eq:Hankel}
			\begin{bmatrix}
				f(T^{-1}x) & \cdots & f(T^{-d}x)\\
				f(T^{-2}x) & \cdots & f(T^{-d-1}x) \\
				\vdots & & \vdots \\
				f(T^{-m+d-1}x) & \cdots & f(T^{-m}x)
			\end{bmatrix}
			\begin{bmatrix}
				c_0\\ \vdots \\ c_{d-1}
			\end{bmatrix}
			=
			\begin{bmatrix}
				f(x) \\
				f(T^{-1}x) \\
				\vdots \\
				f(T^{-m+d}x)
			\end{bmatrix}
		\end{equation}
		involving a Hankel-type matrix (more precisely, a rectangular Toeplitz matrix) in the construction of (Extended) Dynamic Mode Decomposition~\cite{SS08,williams2015data}.
	\end{remark}
	
	\begin{remark}[Computational aspects]
		\revision{There are several issues that need to be taken into account when computing the filter numerically from finite data.}
		
		\revision{The problem of obtaining the filter vector $c$ via \eqref{eq:filter_new} inherits the numerical condition of the Krylov basis~$\mathcal{B}_d$. If the basis elements are highly non-orthogonal, the computation of $c$ is ill-conditioned. This need not effect the performance of the filter, though. This is because the filter's performance is measured by the objective function~$J$ in $L^2_\mu$ and not in the space of coefficients~$c$.}
		
		\revision{A more severe source of error is the finite-sample approximation~\eqref{eq:filter_finite} of~$J$. From an ergodic-theoretic perspective, the convergence in Lemma~\ref{lem:filter_convergence} as $m\to \infty$ can be arbitrarily slow. From the perspective of quadrature, integrals of the form~\eqref{eq:vareq} involve functions $f\circ T^{-d+1}$, which might become increasingly irregular as $d$ grows and thus slowing down convergence. Such integrals enter implicitly while computing inner products in the solution of the least squares problem~\eqref{eq:Hankel}. A remedy (simple, but without performance guarantee) is to compute the filter for larger sample sizes $m$ (or start with reducing the set, if obtaining new samples is not an option) and thus checking whether convergence can be safely assumed.} 
	\end{remark}
	
	We now return to our original question, whether the filter is able to continue an observation sequence $\ldots,f(T^{-1}x),f(x)$. Hence, we would like to infer $\cU f(x)$. \revision{While there are also other mathematical frameworks for quantifying predictability and information content in ensemble predictions (see \cite{Bröcker,Cai,Gneiting} and references therein), we focus on the \emph{mean squared forecast error} and can give the following result.}
	\begin{proposition}
		\label{prop:forecasterror}
		Let $f$ be a cyclic vector for~$\cU^{-1}$. Then, as $d\to\infty$, we have
		\[
		\| \,\cU_d f - \cU f \|_{L^2_\mu} \to 0.
		\]
	\end{proposition}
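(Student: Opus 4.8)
The plan is to reduce the claim to the elementary Hilbert-space fact that the orthogonal projections onto a nested increasing family of closed subspaces converge strongly to the projection onto the closure of their union. By Proposition~\ref{prop:L2projU} and the discussion preceding it, $\cU_d f$ is nothing but the $L^2_\mu$-orthogonal projection of $\cU f$ onto the Krylov space $\cK_d = \mathrm{span}\{f, f\circ T^{-1},\ldots, f\circ T^{-d+1}\}$; I would first note that this projection (and hence $\cU_d f$) is well-defined regardless of whether $\mathcal{B}_d$ happens to be linearly independent, so no basis assumption is needed at this point.

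Next I would record the two structural facts about the spaces $\cK_d$. On the one hand, $\cK_d \subseteq \cK_{d+1}$, so $(\cK_d)_{d\in\N}$ is a nested increasing sequence of finite-dimensional (hence closed) subspaces of $L^2_\mu$. On the other hand, since $T$ is essentially invertible we have $f\circ T^{-j} = \cU^{-j} f$, so
\[
\bigcup_{d\in\N} \cK_d = \mathrm{span}\{ f, \cU^{-1} f, \cU^{-2} f, \ldots \},
\]
and the hypothesis that $f$ is a cyclic vector for $\cU^{-1}$ says precisely that the closure of this span is $C_{f,\,\cU^{-1}} = L^2_\mu$ (equivalently, strong asymptotic predictiveness in the sense recalled in the introduction).

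Finally I would combine these: for any $h \in L^2_\mu$ and any $\varepsilon > 0$, density of $\bigcup_d \cK_d$ yields some $g \in \cK_{d_0}$ with $\|h - g\|_{L^2_\mu} < \varepsilon$; then for every $d \ge d_0$ the best-approximation property of the orthogonal projection gives $\|P_{\cK_d} h - h\|_{L^2_\mu} \le \|g - h\|_{L^2_\mu} < \varepsilon$, so $P_{\cK_d} h \to h$ as $d\to\infty$. Applying this with $h = \cU f$ gives $\cU_d f = P_{\cK_d} \cU f \to \cU f$, as claimed. I do not expect a genuine obstacle in this argument; the only point deserving care is that it is cyclicity with respect to $\cU^{-1}$, and not $\cU$, that is required here, reflecting that the filter extrapolates from the \emph{past} observations $f, f\circ T^{-1}, f\circ T^{-2},\ldots$ rather than the future ones.
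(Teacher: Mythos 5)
Your argument is correct and is essentially the paper's own proof: cyclicity of $f$ for $\cU^{-1}$ gives density of $\bigcup_{d}\cK_d$, and the nestedness of the Krylov spaces together with the best-approximation property of the orthogonal projection $\cU_d f = P_{\cK_d}\cU f$ (Proposition~\ref{prop:L2projU}) yields the convergence. You merely spell out the standard projection-convergence step that the paper leaves implicit, which is fine.
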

	\begin{proof}
		Since $f$ is a cyclic vector, the space $\bigcup_{d\ge 0} \cK_d$ is dense in $L^2_\mu$. Since $\cU_d$ is the orthogonal projection of $\cU$ to $\cK_d$, and the spaces $\cK_d$ are nested, i.e., $\cK_{d_1} \subset \cK_{d_2}$ for $d_1\le d_2$, this proves that $\cU_d f$ converges to $\cU f$.
	\end{proof}
	
	Proposition~\ref{prop:forecasterror} motivates the following notion of predictiveness.
	\begin{definition}[Predictiveness]
		\label{def:predictability}
		We call the pair $(T,f)$ of a mpt and an $L^2_{\mu}$-observable \emph{asymptotically linearly predictive in the $L^2$-sense}, if $f$ is a cyclic vector of~$\cU^{-1} = \cU_{T^{-1}}$. If $T$ and $\mu$ are clear from the context, we simply say that $f$ is \emph{predictive}.
	\end{definition}
	
	\begin{remark}
		\revision{The term ``predictive'' is motivated by the observation that if $f \in L^2_\mu$ is a cyclic vector of $\cU^{-1}$, then for any $g \in L^2_{\mu}$ we can approximate $\cU g$ by a linear combination of $\cU^{-j}f$, $j\in \N$, to any desired accuracy in $L^2_{\mu}$ norm by minimizing a modified version of $\tilde{J}(\tilde{c})$ in \eqref{eq:filter}.}
	\end{remark}
	
	An interesting question is, whether $L^2_\mu$ convergence in Proposition~\ref{prop:forecasterror} can be replaced by almost sure pointwise convergence.
	
	\begin{remark}[Almost sure asymptotic predictiveness]
		Convergence of a sequence in $L^2_\mu$ implies convergence almost everywhere for a subsequence. Thus, if $f$ is a cyclic vector of~$\cU^{-1}$, there is a sequence $(d_k)_{k\in \N}$ of delay depths for which ``almost sure asymptotic linear predictiveness'' holds.
		However, we do not know in general which subsequence this is.
		In practice, an a posteriori test for convergence of a sequence $\big(\cU_{d_k}f(x) \big)_{k\in\N}$ could be our only option.
		
		We also note that almost-everywhere pointwise convergence of orthogonal projections on $L^2$ spaces is considered in \cite[Theorems~2 and~3]{Dun77}, but it seems unclear whether the conditions therein (e.g., positivity) on the projection can be met in our situation (i.e., $L^2_\mu$-orthogonal projection on~$\cK_d$). It seems rather unlikely for an arbitrary observation function~$f$.
	\end{remark}
	
	Proposition~\ref{prop:forecasterror} shows that cyclic vectors of $\cU^{-1}$ are of particular interest and importance to the forecast properties of the filter. In section~\ref{sec:genericity} we shall discuss the size of the set of cyclic vectors and the size of the set of transformations that have ``many'' of them. Before doing so, we consider a weaker but ``sufficient'' notion of predictiveness.
	
	\subsection{On weak predictiveness}
	\label{ssec:ST}
	
	\revision{
		Observe that in order to approximate the next element of the time series by a linear filter, it is only required\footnote{For a possible nonlinear analogue of this in the context of Takens' theorem, see~\cite[Definition~1.8]{baranski2022prediction}.} that $\cU f$ can be arbitrarily well approximated in $L^2_\mu$ by polynomials in $\cU^{-1}$ applied to~$f$. }
	\revision{
		\begin{definition}[Weak predictiveness]
			\label{def:weakpred}
			We call the pair $(T,f)$, or simply the observable $f$, \emph{weakly predictive} if $f \circ T \in \overline{\bigcup_{d\in\N} \cK_d}$.
			Equivalently, weak predictiveness means $\lim_{d\to\infty} d_{L^2_\mu}(\cU f, \cK_d) = 0$, where $d_{L^2_\mu}$ denotes the distance in $L^2_\mu$ of a function to a subspace.
		\end{definition}
		In the following, we are almost exclusively going to consider the stronger notion of predictiveness implied by the existence of cyclic vectors, as in Definition~\ref{def:predictability} and Proposition~\ref{prop:forecasterror}. It is nevertheless worthwhile discussing the weaker notion presented here, since observables that are eigenfunctions of $\cU$ (such as constant functions) will always be weakly but never strongly predictive (except for pathological cases where $L^2_\mu$ is one-dimensional). In particular, we give a characterization of weak predictiveness in Theorem~\ref{thm:weakpred} and show in Proposition~\ref{prop:DiscSpecWeakPredict} that if the system has discrete spectrum, then \emph{every} observable $f$ is weakly predictive.
	}
	
	To analyze weak predictiveness of a system, we will use the Spectral Theorem for unitary operators. Let $\bT \subset \C$ denote the unit circle.
	In a simplified form (namely by restricting the operator in question to a cyclic subspace), the Spectral Theorem of unitary operators states that $\cU$ is isomorphic to the multiplication operator on $L^2_\nu(\bT)$ for a suitable measure~$\nu$. The general form shows isomorphy to a sum of such spaces. See, for instance, \cite[Chapter~8]{taylor2013partial} or~\cite[Chapter~18]{EFHN15} for the general statement and also for the form discussed next in section~\ref{subsec:STsummary}.

	\subsubsection{Spectral Theorem for unitary operators}
	\label{subsec:STsummary}
	
	As a warm-up, consider a unitary (with respect to the Euclidean inner product~$\langle \cdot,\cdot \rangle$) matrix $A \in \C^{k\times k}$ with eigenvalues $\lambda_i \in \bT$, $i=1,\ldots,k$, with an orthonormal set of eigenvectors~$v_i$. We can write the image of a vector $v = \sum_i c_i v_i$ under $p(A)$, where $p$ is a polynomial, as~$p(A)v = \sum_i p(\lambda_i) c_i v_i$. Then we have that
	\[
	\langle p(A)v,v \rangle = \sum_i p(\lambda_i) |c_i|^2 = \int_{\bT} p(\lambda) \, \Big( \sum_i |c_i|^2\delta_{\lambda_i} \Big)(d\lambda),
	\]
	where $\delta_x$ denotes the Dirac measure centered at~$x$. The measure $\nu = \sum_i |c_i|^2\delta_{\lambda_i}$ thus encodes a weighted version of the spectrum of $A$ associated with the vector~$v$.
	
	The above finite-dimensional construction can be generalized to unitary operators on Hilbert spaces. We will not consider the most general case, it will be sufficient to restrict a unitary operator $\cA:H \to H$ on the Hilbert space $H$ to the cyclic subspace $C_{v,\cA}$ of $\cA$ and $v\in H$:
	\[
	C_{v,\cA} = \overline{ \mathrm{span} \left\{ v,\cA v,\cA^2 v,\ldots \right\} }.
	\]
	Note that $C_{v,\cA}$ is invariant under~$\cA$. By Riesz duality, one finds a unique ($v$-dependent) measure $\nu$ (which we will call the ``trace of the spectral measure for $v$'', or simply ``trace measure'') with
	\[
	\langle h(\cA)v,v \rangle = \int_{\bT} h\, d \nu \quad \forall h\in C(\bT).
	\]
	In fact, this equality can be extended to hold for every $h\in L^2_{\nu}(\bT)$, and one can define a unitary transformation
	\[
	W: L^2_{\nu}(\bT) \to H,\quad W h:= h(\cA)v.
	\]
	Note that if $\mathds{1}$ denotes the constant function with value 1 on~$\mathbb{T}$, then~$W\mathds{1} = v$. Ultimately, by applying $W$ to the function $\zeta \mapsto \zeta h(\zeta)$, one has that
	\begin{equation}
		\label{eq:ST}
		W^{-1}\cA Wh (\zeta) = \zeta h(\zeta)\quad \forall h\in L^2_{\nu}(\bT).    
	\end{equation}
	Thus, on its spectrum, $\cA$ can be characterized by the multiplication operator $M$, with $(Mh)(\zeta) := \zeta h(\zeta)$, just as for matrices. The Spectral Theorem comprises the statements that $W$ is a unitary transformation and that \eqref{eq:ST} holds.
	
	\subsubsection{Application of the Spectral Theorem}
	
	For weak predictiveness, we would need $\cU^{-1}f \in C_{f,\, \cU}$ (for simplicity of notation, we swapped the roles of $T$ and $T^{-1}$). In other words, for any $\ep>0$ we would like to have a polynomial $p_{\ep}$ such that
	\[
	\| \cU^{-1}f - p_{\ep}(\cU)f\|_{L^2_\mu} < \ep.
	\]
	Note that the Spectral Theorem can be used in its above simplified form on the invariant cyclic subspace~$C_{f,\,\cU}$ for $v = f$ and $\mathcal{A} = \mathcal{U}$. With this and $W\mathds{1} = f$, one has that
	\begin{align*}
		\| \cU^{-1}f - p_{\ep}(\cU)f\|_{L^2_\mu} < \ep \quad &\Leftrightarrow \quad \| WM^{-1}W^{-1}f - Wp_{\ep}(M)W^{-1} f\|_{L^2_\mu} < \ep \\
		&\Leftrightarrow \quad \| M^{-1}\mathds{1} - p_{\ep}(M)\mathds{1}\|_{L^2_\nu} < \ep.
	\end{align*}
	Spelling out the last expression, we would like to approximate the function $\zeta \mapsto \zeta^{-1} $ arbitrarily well by polynomials in the $L^2_\nu$ norm. Note that if $\nu = \mathrm{Leb}$, then this is impossible, because on the complex circle $\bT$ the monomials $\zeta \mapsto \zeta^k$, $k\in\mathbb{Z}$, are orthogonal trigonometric polynomials.
	If, however, $\nu$ is a finite sum of weighted Dirac measures (for finitely many distinct eigenvalues $\lambda_i$), then there is a polynomial $p$ with $p(\lambda_i) = \frac{1}{\lambda_i}$ for all~$i$. This follows from the invertibility of a Vandermonde matrix.
	A much broader answer, containing these previous two examples, can be given by invoking an implication of Szeg{\H o}'s theorem:
	
	\begin{theorem}[{Kolmogorov's Density Theorem~\cite[Theorem 2.11.5]{simon2010szegHo}}]
		\label{thm:KolmogorovDensity}
		Let $\nu$ be a probability measure on the complex circle $\bT$ of the form $d\nu(\theta) = w(\theta)\,d\theta + d\nu_s(\theta)$, where $\nu_s$ is singular and~$d\theta$ measures arc length. Then the polynomials are dense in $L^2_\nu(\bT)$ if and only if
		\[
		\int \log(w(\theta))\,d\theta = -\infty.
		\]
	\end{theorem}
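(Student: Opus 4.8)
The plan is to recast the density question as a statement about the invariant-subspace structure of the multiplication operator $M$, $(Mh)(\zeta)=\zeta h(\zeta)$, on $L^2_\nu(\bT)$, and then to invoke (a form of) Szeg{\H o}'s theorem. Write $\mathcal{P}=\mathrm{span}\{1,\zeta,\zeta^2,\dots\}$ for the analytic polynomials and $\overline{\mathcal{P}}$ for its $L^2_\nu$-closure; the task is to decide when $\overline{\mathcal{P}}=L^2_\nu$. Recall from the discussion preceding the theorem that what is ultimately wanted is $\zeta^{-1}\in\overline{\mathcal{P}}$, so I will first show that these two conditions coincide.

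First I would prove the elementary equivalence $\overline{\mathcal{P}}=L^2_\nu \iff \zeta^{-1}\in\overline{\mathcal{P}} \iff \mathrm{dist}_{L^2_\nu}\!\big(1,\overline{\mathrm{span}}\{\zeta,\zeta^2,\dots\}\big)=0$. The first implication is trivial; for its converse, observe that $\overline{\mathcal{P}}$ is invariant under the unitary $M$ (because $M\mathcal{P}\subseteq\mathcal{P}$), and that $\zeta^{-1}\in\overline{\mathcal{P}}$ forces $M^{-1}\overline{\mathcal{P}}\subseteq\overline{\mathcal{P}}$ as well (check this on the generators $\zeta^k$, $k\ge0$). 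Hence $\overline{\mathcal{P}}$ is a closed subspace invariant under $M$ and $M^{-1}$ containing the constant $1$, so it contains every $\zeta^k$, $k\in\mathbb{Z}$, and therefore equals $L^2_\nu$ since the trigonometric polynomials are dense in $C(\bT)$ and hence in $L^2_\nu$. The last equivalence follows by applying the unitary $M$: $\zeta^{-1}\in\overline{\mathcal{P}}$ iff $1\in M\overline{\mathcal{P}}=\overline{M\mathcal{P}}=\overline{\mathrm{span}}\{\zeta,\zeta^2,\dots\}$.

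It then remains to show that $\mathrm{dist}_{L^2_\nu}\!\big(1,\overline{\mathrm{span}}\{\zeta,\zeta^2,\dots\}\big)=0$ if and only if $\int\log w\,d\theta=-\infty$; this is the implication of Szeg{\H o}'s theorem referred to, which I would either cite from \cite{simon2010szegHo} or argue directly as follows. If $\int\log w\,d\theta>-\infty$, let $D$ be the Szeg{\H o} (outer) function, $D(z)=\exp\big(\tfrac1{4\pi}\int\tfrac{e^{i\theta}+z}{e^{i\theta}-z}\log w(\theta)\,d\theta\big)$, so that $D\in H^2$, $|D|^2=w$ a.e.\ on $\bT$, and $D(0)\neq0$; define $g\in L^2_\nu$ by $g=\zeta^{-1}\overline{D}/D$ on the absolutely continuous part of $\nu$ and $g=0$ on its singular part. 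Then $g\neq0$, and for $k\ge0$ one has $\langle\zeta^k,g\rangle_{L^2_\nu}=\int\zeta^{k+1}(D/\overline{D})\,w\,d\theta=\int\zeta^{k+1}D^2\,d\theta=0$ because $D^2\in H^1$, so $g\perp\mathcal{P}$ and the polynomials are not dense. Conversely, if the polynomials are not dense, take $0\neq g\in L^2_\nu$ with $\int\zeta^{-k}g\,d\nu=0$ for all $k\ge0$; then $g\,d\nu$ has vanishing nonnegative Fourier coefficients, so by the F.\ and M.\ Riesz theorem $g\,d\nu=\phi\,d\theta$ is absolutely continuous with $\overline{\phi}\in H^1$ (and $\int\phi\,d\theta=0$). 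Matching absolutely continuous and singular parts gives $g=0$ $\nu_s$-a.e.\ and $\phi=gw$; since $g\neq0$, also $\phi\not\equiv0$, hence $\log|\phi|\in L^1(d\theta)$ (boundary values of a nonzero $H^1$ function are log-integrable), and $\int|\phi|^2/w\,d\theta=\int|g|^2w\,d\theta=\|g\|_{L^2_\nu}^2<\infty$. Applying Jensen's inequality to $\log$ and the probability measure $d\theta/(2\pi)$ yields $\int\log(|\phi|^2/w)\,d\theta<\infty$, so $\int\log w\,d\theta=2\int\log|\phi|\,d\theta-\int\log(|\phi|^2/w)\,d\theta>-\infty$, because the first integral is finite and the second is bounded above.

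The main obstacle is the second half of the last step: it is where the genuinely analytic content sits---morally, that the singular part $\nu_s$ is invisible to this extremal problem---and it draws on nontrivial Hardy-space theory (the F.\ and M.\ Riesz theorem and the log-integrability of boundary values of nonzero $H^1$ functions), just as the easier half relies on the existence and boundary behaviour of the outer function $D$. If one is content to invoke Szeg{\H o}'s theorem in its sharp quantitative form $\mathrm{dist}_{L^2_\nu}\!\big(1,\overline{\mathrm{span}}\{\zeta,\zeta^2,\dots\}\big)^2=\exp\big(\tfrac1{2\pi}\int\log w\,d\theta\big)$ (the right-hand side read as $0$ when $\log w\notin L^1$), then only the elementary reduction of the second paragraph is needed, and the proof is short.
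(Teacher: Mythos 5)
The paper does not prove this statement at all: Theorem~\ref{thm:KolmogorovDensity} is quoted verbatim as an external result from Simon's monograph \cite[Theorem~2.11.5]{simon2010szegHo}, so there is no internal proof to compare against. Your argument is, in essence, the standard proof of the Szeg{\H o}--Kolmogorov--Krein circle of results, and it is correct. The reduction in your second paragraph (density of the analytic polynomials $\iff$ $\zeta^{-1}\in\overline{\mathcal{P}}$ $\iff$ $\mathrm{dist}_{L^2_\nu}(1,\overline{\mathrm{span}}\{\zeta,\zeta^2,\dots\})=0$, using that $M$ is unitary on $L^2_\nu$, that $M^{-1}$-invariance can be checked on the monomials, and that trigonometric polynomials are dense in $L^2_\nu$ via Stone--Weierstra{\ss} and regularity of $\nu$) closely parallels the argument the paper itself uses in the proof of Theorem~\ref{thm:weakpred}, where the corresponding step is delegated to \cite[Lemma~2.11.3]{simon2010szegHo}. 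Your third paragraph then supplies exactly the analytic content that the paper outsources: the forward direction via the outer function $D$ with $|D|^2=w$, $D(0)\neq 0$ and the annihilating vector $g=\zeta^{-1}\overline{D}/D$ (correctly using $D^2\in H^1$), and the converse via F.\ and M.\ Riesz, the log-integrability of boundary values of nonzero $H^1$ functions, and the Jensen estimate; the small bookkeeping points (that $\log w\in L^1$ forces $w>0$ a.e.\ in the forward direction, that $\log|\phi|\in L^1$ forces $w>0$ a.e.\ in the converse, and that only an upper bound on $\int\log(|\phi|^2/w)\,d\theta$ is needed) are all handled. So your proposal proves the cited theorem rather than merely invoking it; if one is willing to cite Szeg{\H o}'s theorem in its sharp quantitative form, as you note, only the elementary reduction is needed, which is effectively the position the paper takes.
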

	
	\revision{
		\begin{theorem}
			\label{thm:weakpred}
			Let $d\nu(\theta) = w(\theta)\,d\theta + d\nu_s(\theta)$ denote the decomposition of the trace measure $\nu$ associated with $T$ and $f$ as in Theorem~\ref{thm:KolmogorovDensity}. Then $f$ is weakly predictive if and only if the \emph{Szeg{\H o} condition} $\int \log(w(\theta))\,d\theta = -\infty$ holds.
		\end{theorem}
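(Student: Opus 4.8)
The plan is to transport the statement into the spectral model of $\cU$ on the cyclic subspace generated by $f$, where weak predictiveness becomes the approximability of a single monomial, and then to upgrade this to full density of the analytic polynomials, so that Kolmogorov's Density Theorem (Theorem~\ref{thm:KolmogorovDensity}) applies and produces the Szeg{\H o} condition.

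First I would dispose of the trivial case $f=0$ and rescale so that $\|f\|_{L^2_\mu}=1$; this changes neither weak predictiveness nor the finiteness of $\int\log(w(\theta))\,d\theta$, and it makes the trace measure $\nu$ of $(T,f)$ — characterised by $\langle h(\cU)f,f\rangle=\int_\bT h\,d\nu$ for $h\in C(\bT)$ — a probability measure, as required in Theorem~\ref{thm:KolmogorovDensity}. Let $H_f:=\overline{\mathrm{span}\{\cU^k f:k\in\mathbb{Z}\}}$ be the reducing subspace generated by $f$. By the Spectral Theorem (section~\ref{subsec:STsummary}) there is a unitary $W\colon L^2_\nu(\bT)\to H_f$ with $W\mathds{1}=f$ and $W^{-1}\cU W=M$, where $M$ is multiplication by $\zeta$; in particular $W^{-1}\cU^k f=\zeta^k$ for all $k\in\mathbb{Z}$, with $\zeta^{-1}=\bar\zeta$ on $\bT$. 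Since $\cU f$ and every $\cU^{-j}f$ lie in $H_f$ and $\overline{\bigcup_d\cK_d}=\overline{\mathrm{span}\{\cU^{-j}f:j\ge 0\}}$, transporting the defining condition $\cU f\in\overline{\bigcup_d\cK_d}$ through the unitary $W^{-1}$ shows that $f$ is weakly predictive if and only if
\[
\zeta\in\overline{\mathrm{span}\{\bar\zeta^{\,j}:j\ge 0\}}^{\,L^2_\nu}.
\]
Applying the conjugate-linear isometry $g\mapsto\bar g$ of $L^2_\nu(\bT)$, this is equivalent to $\bar\zeta\in\cP$, where $\cP:=\overline{\mathrm{span}\{\zeta^k:k\ge 0\}}^{\,L^2_\nu}$ denotes the closure of the analytic polynomials in $L^2_\nu(\bT)$. (This is the reduction already indicated before the theorem statement, carried out here without the cosmetic swap of $T$ and $T^{-1}$, which merely reflects $\nu$ and leaves the Szeg{\H o} integral unchanged.)

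Next I would show that $\bar\zeta\in\cP$ already forces $\cP=L^2_\nu(\bT)$. The subspace $\cP$ is closed and, because $\zeta\cdot\zeta^k=\zeta^{k+1}\in\cP$ and $M$ is bounded, invariant under $M$; if moreover $\bar\zeta\in\cP$, then $\bar\zeta\cdot\zeta^k=\zeta^{k-1}\in\cP$ for $k\ge 1$ while $\bar\zeta\cdot\mathds{1}=\bar\zeta\in\cP$ by hypothesis, so — $M^{-1}$ being an isometry of $L^2_\nu(\bT)$ since $|\zeta|=1$ on $\bT$ — also $M^{-1}\cP\subseteq\cP$. Hence $\cP$ is invariant under every power $M^k$, $k\in\mathbb{Z}$, so it contains $\overline{\mathrm{span}\{\zeta^k:k\in\mathbb{Z}\}}$, which is all of $L^2_\nu(\bT)$ because the trigonometric polynomials are dense in $C(\bT)$ and hence in $L^2$ of any finite Borel measure on $\bT$. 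As the reverse implication is trivial, $f$ is weakly predictive if and only if the analytic polynomials are dense in $L^2_\nu(\bT)$, which by Theorem~\ref{thm:KolmogorovDensity} holds precisely when $\int\log(w(\theta))\,d\theta=-\infty$, as claimed.

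There is no deep obstacle here: the Spectral Theorem and Kolmogorov's Density Theorem carry the argument. The one step that needs an actual idea is the ``doubling'' in the previous paragraph — realising that the a priori weak requirement $\bar\zeta\in\cP$ (approximability of a single function) propagates, via $M$-invariance, to density of all analytic polynomials. The rest is bookkeeping: invoking the Spectral Theorem on the two-sided cyclic subspace $H_f$ so that the negative powers $\cU^{-j}f$ lie in the model space, using $\zeta^{-1}=\bar\zeta$ on $\bT$ so that $M$ is unitarily invertible, and normalising $\nu$ so that Theorem~\ref{thm:KolmogorovDensity} is literally applicable.
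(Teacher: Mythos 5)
Your proof is correct and takes essentially the same route as the paper's: reduce via the spectral theorem to the approximability of $\zeta^{-1}$ (equivalently $\bar\zeta$) by analytic polynomials in $L^2_\nu$, propagate this to all negative powers, conclude density of the trigonometric polynomials via Stone--Weierstra{\ss} and the density of $C(\bT)$ in $L^2_\nu$, and then apply Kolmogorov's Density Theorem for both directions. The only difference is minor: where the paper cites Lemma~2.11.3 of Simon's book to get $\zeta^{-n}\in\cP$ for all $n$, you prove this directly through the $M^{-1}$-invariance of the closed analytic span, a clean self-contained substitute.
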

	}
	\begin{proof}
		\revision{We recall that---with abbreviating notation such that $\zeta^n$ denotes the rational function~$\zeta\mapsto \zeta^n$ for $n\in\mathbb{Z}$---weak predictiveness is equivalent to that $\zeta^{-1}$ can be arbitrary well approximated by monomials~$\mathds{1},\zeta,\zeta^2,\ldots$ in~$L^2_\nu$. Equivalently, we write $\zeta^{-1} \in \overline{\mathrm{span}\{\mathds{1},\zeta,\zeta^2,\ldots \}}$.}
		
		\revision{
			``$\Leftarrow$''. Assume that the Szeg{\H o} condition holds. Then, by Theorem~\ref{thm:KolmogorovDensity} we have that $L^2_\nu = \overline{\mathrm{span}\{\mathds{1},\zeta,\zeta^2,\ldots \}}$, and in particular $\zeta^{-1} \in \overline{\mathrm{span}\{\mathds{1},\zeta,\zeta^2,\ldots \}}$.}
		
		\revision{
			``$\Rightarrow$''. The proof of this direction is due to Friedrich Philipp and is inspired by \cite[Lemma~7.4]{philipp2017bessel}. We include it with his permission.}
		
		\revision{
			Assume weak predictiveness, i.e., $\zeta^{-1} \in \overline{\mathrm{span}\{\mathds{1},\zeta,\zeta^2,\ldots \}}$. By \cite[Lemma~2.11.3]{simon2010szegHo} we have that $\zeta^{-n} \in \overline{\mathrm{span}\{\mathds{1},\zeta,\zeta^2,\ldots \}}$ for every $n\in\mathbb{N}$. In particular, for any two polynomials $p,q$ we have that $p(\zeta) + q(\bar\zeta) \in \overline{\mathrm{span}\{\mathds{1},\zeta,\zeta^2,\ldots \}}$, showing that $\overline{\mathrm{span}\{\mathds{1},\zeta,\zeta^2,\ldots \}}$ is closed under complex conjugation (here, $\bar\zeta$ stands for the complex conjugate of~$\zeta$). The Stone--Weierstra{\ss} theorem now implies that $\overline{\mathrm{span}\{\mathds{1},\zeta,\zeta^2,\ldots \}}$ is dense in~$C(\bT)$, the usual space of continuous functions on the unit circle. Since $\nu$ is a Borel probability measure on $\bT$, it is regular, and \cite[Theorem~3.14]{Rud87} shows that $C(\bT)$ is dense in~$L^2_\nu$. Theorem~\ref{thm:KolmogorovDensity} now implies that the Szeg{\H o} condition holds.}
	\end{proof}

    \revision{The above proof also shows that weak predictiveness implies that $\cU^n f$ can be approximated from past observations for \emph{every}~$n\in\mathbb{N}$, not merely for~$n=1$.}
	
	\begin{remark}[Time flip]
		\revision{
			Note that swapping the roles of $T$ and $T^{-1}$ is merely a reflection of the trace measure $\nu$ on the real axis. Thus, all statements on weak predictiveness that we consider here are valid both for $T$ and~$T^{-1}$. }
		
		\revision{
			Furthermore, the proof of Theorem~\ref{thm:weakpred} shows that weak predictiveness is equivalent to the cyclic subspace of $f$ under $\cU$ being equal to its cyclic subspace under $\cU^{-1}$, i.e., $C_{f,\cU} = C_{f,\cU^{-1}}$. To see this, first note that the proof shows the equivalence of weak predictiveness and $\overline{\mathrm{span}\{\ldots, \zeta^{-2},\zeta^{-1},\mathds{1},\zeta,\zeta^2,\ldots \}} = \overline{\mathrm{span}\{\mathds{1},\zeta,\zeta^2,\ldots \}} = L^2_\nu$. By complex conjugation one obtains that also $\overline{\mathrm{span}\{\mathds{1},\zeta^{-1},\zeta^{-2},\ldots \}} = L^2_\nu$, which implies our claim.
		}
	\end{remark}
	
	If $\cU$ has discrete spectrum, then for any observable $f$ the decomposition of the associated trace measure $\nu$ in Theorem~\ref{thm:KolmogorovDensity} has no absolutely continuous part, i.e.,~$w\equiv 0$. Theorem~\ref{thm:weakpred} thus implies:
	\begin{proposition}
		\label{prop:DiscSpecWeakPredict}
		If the invertible mpt~$T$ has discrete spectrum, then every observable $f\in L^2_\mu$ is weakly predictive.
	\end{proposition}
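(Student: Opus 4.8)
The plan is to reduce the claim to the characterization of weak predictiveness already established in Theorem~\ref{thm:weakpred}: it suffices to show that for \emph{every} $f\in L^2_\mu$ the trace measure $\nu$ associated with $T$ and $f$ has vanishing absolutely continuous part, so that the Szeg{\H o} condition $\int_{\bT}\log(w(\theta))\,d\theta = -\infty$ holds in its degenerate form $w\equiv 0$.

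First I would spell out what discrete spectrum gives operator-theoretically: $\cU$ admits an orthonormal basis $(\phi_i)_{i\in\N}$ of $L^2_\mu$ of eigenfunctions, $\cU\phi_i = \lambda_i\phi_i$ with $\lambda_i\in\bT$. Fixing $f\in L^2_\mu$ (the case $f=0$ being trivial) and expanding $f = \sum_i c_i\phi_i$ with $\sum_i|c_i|^2 = \|f\|_{L^2_\mu}^2 < \infty$, I would compute $h(\cU)f = \sum_i c_i\,h(\lambda_i)\,\phi_i$ for a polynomial $h$, hence
\[
\langle h(\cU)f, f\rangle_{L^2_\mu} = \sum_i |c_i|^2\, h(\lambda_i) = \int_{\bT} h\, d\nu, \qquad \nu := \sum_i |c_i|^2\, \delta_{\lambda_i}.
\]
By uniqueness of the trace measure (Riesz duality, cf.\ section~\ref{subsec:STsummary}) this purely atomic measure is, up to the harmless normalization by $\|f\|_{L^2_\mu}^2$, the trace measure of Theorem~\ref{thm:KolmogorovDensity}.

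Next I would note that $\nu$ is concentrated on the countable --- hence Lebesgue-null --- set $\{\lambda_i : c_i\neq 0\}$, so $\nu$ is singular with respect to arc-length measure on $\bT$. Thus in the Lebesgue decomposition $d\nu(\theta) = w(\theta)\,d\theta + d\nu_s(\theta)$ one has $w\equiv 0$ and $\nu_s=\nu$, whence $\int_{\bT}\log(w(\theta))\,d\theta = -\infty$ by the usual convention. Theorem~\ref{thm:weakpred} then delivers weak predictiveness of $f$, and since $f\in L^2_\mu$ was arbitrary, the proposition follows.

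There is no genuine obstacle here --- the statement is essentially a one-line corollary of Theorem~\ref{thm:weakpred} --- so the only points needing care are bookkeeping: aggregating the weights $\sum_{i:\,\lambda_i=\lambda}|c_i|^2$ correctly when an eigenvalue is repeated, the normalization so that Theorem~\ref{thm:KolmogorovDensity} applies to a probability measure (or simply remarking that the Lebesgue decomposition and the Szeg{\H o} integral are insensitive to a positive scalar factor), and reading $\int\log w\,d\theta = -\infty$ in the degenerate case $w\equiv 0$, which is exactly the convention under which Theorem~\ref{thm:KolmogorovDensity} is stated. One could also argue directly without the trace measure --- approximating $\cU^{-1}f$ in $L^2_\mu$ by truncating the eigen-expansion and interpolating $\lambda\mapsto\lambda^{-1}$ on the finitely many dominant eigenvalues via a Vandermonde argument, then controlling the tail --- but invoking Theorem~\ref{thm:weakpred} is cleaner.
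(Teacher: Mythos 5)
Your proposal is correct and follows the paper's own route: the paper's proof is precisely the observation that under discrete spectrum the trace measure of any $f$ is purely atomic, so $w\equiv 0$ in the decomposition of Theorem~\ref{thm:KolmogorovDensity}, and Theorem~\ref{thm:weakpred} then gives weak predictiveness. Your eigen-expansion computation of the trace measure and the bookkeeping remarks simply make explicit the details behind that one-line argument.
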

	
	We note that Theorem~\ref{thm:weakpred} allows for mixing systems (that have purely continuous spectrum) also to be predictive; for instance by having a subinterval of $\bT$ (i.e., an arc) of positive (arc-length) measure on which~$w=0$. Note that this is compatible with our observation in section~\ref{subsec:RankOne} below, that there are mixing systems which even possess cyclic vectors.

	\section{Genericity of asymptotically linearly predictive pairs~$(T,f)$}
	\label{sec:genericity}
	
	Predictiveness in Proposition~\ref{prop:forecasterror} requires cyclic vectors of~$\cU^{-1}$. For simplicity of notation, in the remainder of this section we are going to speak about cyclic vectors of~$\cU$. We ask the reader to keep in mind that the statements hold true for~$\cU^{-1}$ just as well, because the properties discussed herein are shared by~$T$ and~$T^{-1}$. 
	
	In section~\ref{subsec:basicsMT} below we define $\mpt$ as the set of all inveritble mpts and endow it with the weak topology.
	We are going to investigate the size of the set of pairs $(T,f)$ such that $f$ is a cyclic vector of~$\cU = \cU_T$.
	For instance, we show in section~\ref{ssec:conjugacy_cyclic} that a dense set of measure-preserving transformations admits a prevalent set of cyclic vectors. The collection of mpts with a dense {\Gd} set of cyclic vectors is even a dense {\Gd} set in $\mpt$ (see section~\ref{subsec:BaireMethod}). This result is extended to the setting of measure-preserving homeomorphisms in section~\ref{subsec:homeo}. 
	
	In section~\ref{subsec:RankOne} we discuss so-called rank one systems. We will see that the rank one property is generic in $\mpt$ and guarantees existence of cyclic vectors. 
	
	\subsection{\label{sec:Preliminaries}Preliminaries}
	
	\subsubsection{Some terminology from topology}
	\label{subsec:basicsTop}

	\begin{definition}[{\Gd} and generic]
		\label{def:Gd}
		In a topological space a \emph{{\Gd} set} is a countable intersection of open sets.
		We call a property \emph{generic} if the set with this property contains a dense {\Gd} set.
	\end{definition}
	Generic properties represent sets which are large in a topological sense, and in particular, nonempty.
	Since a unitary transformation is an isometric bijection, it maps \Gd{} sets to \Gd{} sets. It is also easy to see that a unitary transformation maps dense sets to dense ones; we will spell this out in Lemma~\ref{lem:cyclic_match} for a specific setting. 
	
	For comparison, we also mention another related concept of genericity: Some authors define a generic property as one that holds on a \emph{residual set} (that is, a countable intersection of dense open sets), with the dual concept being a \emph{meagre set} (i.e., a countable union of nowhere dense closed sets). We observe as an important practical aspect for applications that, if a property holds on a residual set, it may not hold for every point, but perturbing the point slightly will generally land one inside the residual set (by nowhere density of the components of the meagre set). Thus, generic sets constitute the most important case to address in theorems and algorithms. 
	
	In this paper we will actually study generic sets in completely metrizable topological spaces. In those spaces a residual set is dense by Baire category theorem and, hence, the two concepts of genericity coincide.
	
	The next ``largeness'' concept, prevalence, arose from the desire to generalize the notion ``Lebesgue-almost surely'' to infinite-dimensional spaces.
	
	\begin{definition}[Prevalence \cite{SaYoCa91}]
		\label{def:prevalence}
		Let $\set{V}$ be a vector space. A set $\set{S} \subset \set{V}$ is called \emph{prevalent} if there is a finite-dimensional \emph{probe space} $\set{E} \subset \set{V}$ such that for all $v\in\set{V}$ (Lebesgue-)almost every point in $v + \set{E}$ belongs to~$\set{S}$.
	\end{definition}
	
	We also note that unitary transformations map prevalent sets to prevalent sets. \revision{To see this, we pick an orthonormal basis $\{e_1,\dots, e_{\dim\set{E}}\}$ of the finite-dimensional probe space $\set{E}$ and define the Lebesgue measure $\mu_{v+\set{E}}$ on $v+\set{E}$ via the coefficients with respect to this basis, that is, for any subset $A$ of $v+\set{E}$ we define $\mu_{v+\set{E}}(A)=\lambda(\vec{A})$, where $\smash{ \vec{A}=\Meng{\vec{c}\in \mathbb{C}^{\dim\set{E}}}{v+\sum^{\dim \set{E}}_{i=1}c_ie_i \in A} }$ and $\lambda$ is the Lebesgue measure on $\mathbb{C}^{\dim \set{E}}$. Since a unitary transformation $U$ is isometric, it is measure-preserving as a map from $(v+\set{E},\mu_{v+\set{E}})$ to $(Uv+U\set{E},\mu_{Uv+U\set{E}})$.} 
	
	\subsubsection{\label{subsec:basicsMT}Some terminology from measure theory}
	Two measure spaces $(\mathbb{X},\mathcal{B},\mu)$ and $(\mathbb{X}',\mathcal{B}',\mu')$ are said to be \emph{isomorphic} if there are null sets $N \subset \mathbb{X}$, $N' \subset \mathbb{X}'$ and an invertible map $\phi:\mathbb{X}\setminus N \to \mathbb{X}'\setminus N'$  such that $\phi$ and $\phi^{-1}$ both are measurable and measure-preserving maps. Such a map $\phi$ is called a \emph{measure-theoretic isomorphism}. In the case when $(\mathbb{X},\mathcal{B},\mu)=(\mathbb{X}',\mathcal{B}',\mu')$ one sometimes also calls such an isomorphism $\phi$ an \emph{automorphism}. We denote the collection of automorphisms of a given measure space by $\mpt$. As usual, two measure-preserving transformations are identified if they differ on a set of measure zero only.
	
	If $\mu(\mathbb{X})=1$, then the measure space $(\mathbb{X},\mathcal{B},\mu)$ is a probability space. An important class of probability spaces are the \emph{Lebesgue--Rokhlin spaces}, that is, they are complete and separable probability spaces. We refer to \cite[section 9.4]{Boga} for details. Every Lebesgue--Rokhlin spaces is isomorphic to a disjoint union of the unit interval with Lebesgue measure and at most countably many atoms \cite[Theorem 9.4.7]{Boga}. 
	
	In the following, we restrict ourselves to Lebesgue--Rokhlin spaces without any atoms and call them \emph{standard measure spaces}. Since every standard measure space is isomorphic to the unit interval with Lebesgue measure on the Borel sets, every invertible mpt of a standard measure space is isomorphic\footnote{Two mpts $T:\mathbb{X} \to \mathbb{X}$ and $S: \mathbb{X}' \to \mathbb{X}'$ are called \emph{isomorphic} if there is an isomorphism $\phi:\mathbb{X}\to \mathbb{X}'$ such that $S = \phi \circ T \circ \phi^{-1}$.} to an invertible Lebesgue-measure-preserving transformation on~$[0,1]$. Hence, it often suffices to prove statements for mpts on~$[0,1]$.
	
	We endow $\mpt$ with the \emph{weak topology}. 
	\begin{definition}[Weak topology on $\mpt$ {\cite[p.~62]{Hal17}}]
		A subbasis of the weak topology is given by all sets of the form
		\[
		N(T,E,\ep) := \{S \in \mathrm{MPT} \mid \mu(TE \, \triangle \, SE) < \ep \},
		\]
		where $E$ is measurable and~$\ep>0$. Convergence in the weak topology of the sequence of mpts $(T_n)_n$ to the mpt $T$, denoted by $T_n \stackrel{w}{\to} T$, is thus defined such that $\mu(T_n E \, \triangle \, T E) \to 0$ as $n\to \infty$ for every $E\in \mathfrak{B}$.
	\end{definition}
	
	This weak topology is even completely metrizable, see \cite[p.~64]{Hal17}. To quote Halmos: ``Since, however, this rather artificial construction [of the metric] does not seem to throw any light on the structure of $\mpt$, I see no point in studying it further.''
	Weak convergence of mpts can be characterized as follows. The proof can be found in Appendix~\ref{app:aux_proofs}.
	\begin{lemma}
		\label{lem:weakapprox}
		For $T,T_n \in \mpt$, $n\in\N$, the following are equivalent:
		\begin{enumerate}
			\item $T_n \stackrel{w}{\to} T$ as $n\to\infty$.
			\item $T_n^{-1} \stackrel{w}{\to} T^{-1}$ as $n\to\infty$.
			\item $\cU_{T_n} \to \cU_T$ as $n\to\infty$ in the strong operator topology, i.e., $\| \cU_{T_n} g - \cU_T g\|_{L^2_\mu} \to 0$ for all~$g\in L^2_\mu$.
			\item $\cU_{T_n} \to \cU_T$ as $n\to\infty$ in the weak operator topology, i.e., $\langle \cU_{T_n} g - \cU_T g, h \rangle_{L^2_\mu} \to 0$ for all~$g,h\in L^2_\mu$.
			\item For every $g\in L^2_\mu$, one has that $\| g\circ T_n^i - g\circ T^i\|_{L^2_\mu} \to 0$ as $n\to\infty$ for every~$i\in\mathbb{Z}$.
		\end{enumerate}
	\end{lemma}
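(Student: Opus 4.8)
The plan is to funnel all five conditions through convergence in the strong operator topology of the Koopman operators $\cU_{T_n}$ and of their inverses $\cU_{T_n}^{-1}=\cU_{T_n^{-1}}$, exploiting throughout that these operators are unitary. The starting observation is the dictionary between the weak topology on $\mpt$ and the action on indicator functions: since $\cU_{T_n}\mathds{1}_E = \mathds{1}_E\circ T_n = \mathds{1}_{T_n^{-1}E}$, one has $\norm{\cU_{T_n}\mathds{1}_E - \cU_T\mathds{1}_E}_{L^2_\mu}^2 = \mu(T_n^{-1}E \,\triangle\, T^{-1}E)$. Hence condition~(2) says exactly that $\cU_{T_n}\mathds{1}_E \to \cU_T\mathds{1}_E$ in $L^2_\mu$ for every measurable~$E$, while condition~(1) is the same statement applied to $T_n^{-1},T^{-1}$ in place of $T_n,T$, since $\cU_{T_n^{-1}}\mathds{1}_E = \mathds{1}_{T_nE}$. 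Because finite linear combinations of indicator functions are dense in $L^2_\mu$ and all $\cU_{T_n}$ have operator norm one, a routine $\ep/3$-argument upgrades convergence on indicators to convergence on all of~$L^2_\mu$; thus (2)$\,\Leftrightarrow\,$(3) and (1)$\,\Leftrightarrow\,$``$\cU_{T_n^{-1}}\to\cU_{T^{-1}}$ in the strong operator topology''.

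Next I would handle the purely operator-theoretic equivalences. The implication (3)$\,\Rightarrow\,$(4) is immediate, and for (4)$\,\Rightarrow\,$(3) I would use the unitarity identity $\norm{\cU_{T_n}g - \cU_T g}^2 = 2\norm{g}^2 - 2\,\mathrm{Re}\,\langle \cU_{T_n}g, \cU_T g\rangle$, whose right-hand side tends to $2\norm{g}^2 - 2\norm{g}^2 = 0$ under weak convergence tested against the fixed vector $\cU_T g$. To tie (3) to its inverse counterpart, note that unitarity gives $\langle \cU_{T_n}^{-1}g, h\rangle = \langle g, \cU_{T_n}h\rangle \to \langle g, \cU_T h\rangle = \langle \cU_T^{-1}g, h\rangle$, so strong convergence of $\cU_{T_n}$ forces weak---hence, by the step just proved, strong---convergence of $\cU_{T_n}^{-1}$, and symmetrically for the converse. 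Since $\cU_{T_n}^{-1}=\cU_{T_n^{-1}}$, chaining these equivalences yields (1)$\,\Leftrightarrow\,$(2)$\,\Leftrightarrow\,$(3)$\,\Leftrightarrow\,$(4).

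It remains to bring in (5). Taking $i=1$ there recovers (3) and $i=0$ is vacuous, so (5)$\,\Rightarrow\,$(3). Conversely, given (3), for $i\ge 1$ one shows $\norm{\cU_{T_n}^i g - \cU_T^i g}\to 0$ by induction on $i$, using the telescoping estimate $\norm{\cU_{T_n}^{i+1}g - \cU_T^{i+1}g} \le \norm{\cU_{T_n}^i g - \cU_T^i g} + \norm{(\cU_{T_n}-\cU_T)(\cU_T^i g)}$, where the first term is controlled by the inductive hypothesis and the second by (3) applied to the fixed vector $\cU_T^i g$; for $i\le -1$ the identical induction is run on the sequence $\cU_{T_n}^{-1}\to\cU_T^{-1}$, whose strong convergence was established above. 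This closes the loop of equivalences.

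I do not anticipate a genuine obstacle: the argument rests on two standard facts---uniformly bounded operators converging on a dense subset converge strongly, and for unitary operators weak convergence together with norm preservation upgrades to strong convergence. The only subtle point is that strong operator convergence need not descend to inverses (equivalently, to adjoints) in general; this is exactly where unitarity is used, and it is the reason conditions (1) and (2) are not merely interchangeable restatements of one another but have to be linked through the adjoint argument above.
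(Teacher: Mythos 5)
Your proposal is correct; every step checks out, including the indicator-function dictionary $\|\cU_{T_n}\mathds{1}_E-\cU_T\mathds{1}_E\|_{L^2_\mu}^2=\mu(T_n^{-1}E\,\triangle\,T^{-1}E)$, the density/uniform-boundedness upgrade, the unitarity identity for (4)$\Rightarrow$(3), and the telescoping induction for (5). The route overlaps heavily with the paper's: the paper likewise passes between (1), (3) and (5) via simple functions, an $\ep/3$ argument and induction on the power $i$, and settles (3)$\Leftrightarrow$(4) by the same unitarity computation (it cites Halmos for (1)$\Leftrightarrow$(3)$\Leftrightarrow$(4) but in effect reproves them through the chain (1)$\Rightarrow$(5)$\Rightarrow$(3)$\Rightarrow$(1)). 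The genuine difference is the link (1)$\Leftrightarrow$(2): the paper does it by a purely measure-theoretic substitution, writing $\mu(T_nA\,\triangle\,TA)=\mu(T^{-1}B\,\triangle\,T_n^{-1}B)$ with $B=T_nA$ and using that $T$, $T_n$ map $\mathfrak{B}$ onto itself, whereas you obtain it operator-theoretically: strong convergence of $\cU_{T_n}$ gives weak convergence of the adjoints/inverses $\cU_{T_n}^{-1}=\cU_{T_n^{-1}}$, which unitarity upgrades to strong convergence, and then the indicator dictionary translates this back to (1). Your version is slightly more conceptual and makes explicit where unitarity is indispensable (strong convergence does not pass to inverses for general contractions); the paper's version is more elementary, staying at the level of sets and avoiding the weak-to-strong upgrade for the inverse sequence. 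One small bookkeeping remark: your dictionary pairs (2) with strong convergence of $\cU_{T_n}$ and (1) with strong convergence of $\cU_{T_n}^{-1}$, which is the cleaner matching under the paper's convention $\cU_Tf=f\circ T$; this is consistent with the paper's statement (1)$\Leftrightarrow$(3) only because (1)$\Leftrightarrow$(2) holds, so it is good that you prove that link rather than treating the two as trivially interchangeable.
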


	\subsection{Ergodic circle rotations have a prevalent set of cyclic vectors}
	
	Recall that predictiveness requires cyclic vectors of~$\cU^{-1}$, while, for simplicity of notation, we consider cyclic vectors of $\cU$ in the entire section~\ref{sec:genericity}. Note that the statement of the next result holds just as well for~$T^{-1}$ as for~$T$.
	\revision{Let $S^1=\R/\mathbb{Z}$ denote the unit circle (the onedimensional unit torus) closed under addition.}
	
	\begin{proposition}\label{prop:Rotation}
		Let $T:S^1\to S^1$, $Tx = x + \alpha \mod 1$, be an ergodic circle rotation and~$\cU:L^2(S^1)\to L^2(S^1)$ the associated Koopman operator, where the measure underlying $L^2(S^1)$ is the Lebesgue measure. Then $\cU$ has a prevalent set of cyclic vectors. Furthermore, the set of cyclic vectors contains a dense \Gd{} set.
	\end{proposition}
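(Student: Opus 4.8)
The plan is to diagonalize $\cU$ in the Fourier basis. Since $T$ is ergodic, $\alpha$ is irrational; writing $e_k(x)=e^{2\pi i kx}$, $k\in\mathbb{Z}$, for the standard orthonormal basis of $L^2(S^1)$, one has $\cU e_k=\lambda_k e_k$ with $\lambda_k:=e^{2\pi i k\alpha}$, and the $\lambda_k$ are pairwise distinct. Hence for $f=\sum_k c_k e_k$, $c_k=\langle f,e_k\rangle$, we get $\cU^n f=\sum_k c_k\lambda_k^n e_k$. The first and only substantial step is the characterization: \emph{$f$ is a cyclic vector of $\cU$ if and only if $c_k\neq0$ for every $k\in\mathbb{Z}$}. ``Only if'' is immediate, since $c_{k_0}=0$ forces $\langle\cU^n f,e_{k_0}\rangle=c_{k_0}\lambda_{k_0}^n=0$ for all $n\ge0$, so $e_{k_0}\perp C_{f,\cU}$. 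For ``if'', assume all $c_k\neq0$ and let $g=\sum_k b_k e_k$ be orthogonal to $\cU^n f$ for every $n\ge0$. By Parseval, $0=\langle\cU^n f,g\rangle=\sum_k a_k\lambda_k^n$ with $a_k:=c_k\overline{b_k}$; since $\sum_k|a_k|\le\|f\|\,\|g\|<\infty$, the series $\varphi(t):=\sum_k a_k e^{2\pi i kt}$ converges uniformly to a continuous function on $S^1$ with $k$-th Fourier coefficient $a_k$, and $\varphi(n\alpha)=\sum_k a_k\lambda_k^n=0$ for all $n\ge0$. As $\alpha$ is irrational, $\{\,n\alpha\bmod1:n\ge0\,\}$ is dense in $S^1$, so $\varphi\equiv0$, whence $a_k=0$ and thus $b_k=0$ for all $k$, i.e.\ $g=0$; therefore $C_{f,\cU}=L^2(S^1)$. (The same conclusion also follows from the Spectral Theorem of Section~\ref{subsec:STsummary} together with Theorem~\ref{thm:KolmogorovDensity}, since the trace measure of such an $f$ is the purely atomic measure $\sum_k|c_k|^2\delta_{\lambda_k}$; the direct argument above is self-contained.)

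Granting this, the set of cyclic vectors is exactly $\mathcal{C}=\bigcap_{k\in\mathbb{Z}}\{\,f\in L^2(S^1):\langle f,e_k\rangle\neq0\,\}$, and the genericity claim becomes routine: each set in the intersection is the preimage of $\C\setminus\{0\}$ under the continuous functional $f\mapsto\langle f,e_k\rangle$, hence open; and it is dense, since any $f$ with $\langle f,e_k\rangle=0$ is approximated by $f+\ep e_k$ as $\ep\to0$. Thus $\mathcal{C}$ is a countable intersection of dense open subsets of the complete metric space $L^2(S^1)$; by the Baire category theorem it is dense, and it is \Gd{} by construction, which in particular yields the ``furthermore'' assertion.

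For prevalence, fix one cyclic vector, e.g.\ $g_0:=\sum_k 2^{-|k|}e_k$ (cyclic by the characterization), and take the one-dimensional probe space $\set{E}:=\C g_0$. For an arbitrary $v\in L^2(S^1)$, a point $v+tg_0$ of $v+\set{E}$ is non-cyclic precisely when $\langle v,e_k\rangle+t\,2^{-|k|}=0$ for some $k$, i.e.\ for $t$ in the countable set $\{\,-2^{|k|}\langle v,e_k\rangle:k\in\mathbb{Z}\,\}\subset\C$, which is Lebesgue-null. Since the Lebesgue measure on $v+\set{E}$ is the one induced by the coordinate $t\in\C$, almost every point of $v+\set{E}$ lies in $\mathcal{C}$, for every $v$; hence $\mathcal{C}$ is prevalent.

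The only place with genuine content is the characterization of cyclic vectors, specifically its ``if'' direction: the key is to repackage the countably many orthogonality relations $\langle\cU^n f,g\rangle=0$, $n\ge0$, into a single continuous function that must then vanish on the dense forward rotation orbit of the origin. Once the description $\mathcal{C}=\{\,f:\langle f,e_k\rangle\neq0\ \forall k\,\}$ is available, both the \Gd{} density and the prevalence follow by the standard arguments above.
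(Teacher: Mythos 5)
Your proof is correct and takes essentially the same route as the paper: both rest on the criterion that $f$ is cyclic exactly when all its Fourier coefficients are nonzero, then obtain prevalence from a one-dimensional probe vector with nonvanishing decaying coefficients and the dense \Gd{} statement from the open dense sets $\{f : \langle f,e_k\rangle\neq 0\}$. The only difference is in packaging the key step: where the paper invokes strong continuity of the translation group and the convolution identity $\FT g\cdot\FT\refl f\equiv 0$, you show the correlation function $\varphi(t)=\sum_k c_k\overline{b_k}\,e^{2\pi i kt}$ is continuous via absolute convergence (Cauchy--Schwarz) and vanishes on the dense orbit $\{n\alpha\}$ --- the same underlying argument, carried out directly in the eigenbasis.
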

	\begin{proof}
		
		\textbf{1. Prevalence.}
		We recall that the translation semigroup $\mathcal{T}^t f = f(\cdot - t)$, $t\in\R$, is strongly continuous on~$L^2(S^1)$, which means that $\lim_{t\to s} \mathcal{T}^t f = \mathcal{T}^s f$ for~$f\in L^2(S^1)$, cf.~\cite[I.4.18]{EN00}. This implies, since the sequence $(n\alpha)_n$ is dense in $S^1$ by ergodicity of~$T$, that
		\begin{equation}
			\label{eq:f_translations}
			\liminf_{n\to\infty} \| \cU^n f - f(\cdot - t)\|_{L^2} = 0\quad \forall t\in S^1.
		\end{equation}
		Assume that $f\in L^2(S^1)$ is not cyclic. Then there is a $0 \neq g\in L^2(S^1)$ such that
		\[
		g \perp \overline{\mathrm{span}\{f,\cU f,\cU^2 f,\ldots \}},
		\]
		which is by~\eqref{eq:f_translations} equivalent to
		\begin{equation}
			\label{eq:g_perp}
			0 = \int_{S^1} g\, f(\cdot - t) = (g * \refl f)(t) \quad \forall t\in S^1,
		\end{equation}
		where $\refl f$, defined by $(\refl f)(x) := f(-x)$ for $x\in S^1$, is the reflection of $f$ and $g*h$ denotes the convolution of $g$ with~$h$. Denoting by $\FT: L^2(S^1) \to \ell^2(\mathbb{Z})$ the Fourier transform, we have that \eqref{eq:g_perp} is equivalent to
		\begin{equation}
			\label{eq:g_perp_fourier}
			\FT g(k) \cdot \FT \refl f(k) = 0 \quad \forall k\in \mathbb{Z}.
		\end{equation}
		Consider the probe vector $p \in L^2(S^1)$ such that the Fourier transform of its reflection is given by
		\[
		\FT \refl p(k) := \left\{ \begin{array}{ll}
			1, & k=0,\\
			\frac{1}{k^2}, & k\neq 0.
		\end{array}\right.
		\]
		Alternatively, one could consider the real-valued probe vector $p$ by replacing $\frac{1}{k^2}$ by~$\frac{1}{k^2}(1 + i\, \mathrm{sign}(k))$, where $i$ denotes the imaginary unit. Let $\lambda\in\C$. By \eqref{eq:g_perp_fourier}, the requirement for $f+\lambda p$ \emph{not} to be cyclic is that
		\[
		\FT g(k) \cdot \left( \FT \refl f(k) + \lambda \FT \refl p(k) \right) = 0\quad \forall k\in \mathbb{Z}
		\]
		for some (possibly $\lambda$-dependent)~$g\neq 0$. However, $g\neq 0$ implies that $\FT g(k) \neq 0$ for at least one~$k\in\mathbb{Z}$. Thus, there is a $k\in\mathbb{Z}$ with $\FT \refl f(k) + \lambda \FT \refl p(k) = 0$, implying~$\lambda = -k^2 \FT \refl f(k)$. Hence, cyclicity of $f+\lambda p$ can fail at most at countably many~$\lambda$. In other words, $f+\lambda p$ is cyclic for Lebesgue-almost every~$\lambda$, rendering the set of cyclic vectors of $\cU$ prevalent.
		
		\textbf{2. Dense \Gd.}
		By the above we have the implication
		\begin{equation}
			\label{eq:cyclic_impl}
			f \text{ not cyclic} \quad\Longrightarrow\quad \exists \, k\in\mathbb{Z} \text{ such that } \FT \refl f (k)=0.
		\end{equation}
		For $k\in\mathbb{Z}$ let us define 
		\[
		A_k := \left\{ f \in L^2(S^1) \mid \FT \refl f (k) \neq 0 \right\} = (\FT \refl)^{-1} \left( \left\{  s\in \ell^2(\mathbb{Z}) \mid s_k \neq 0\right\} \right),
		\]
		where we denote by $s_k$ the $k$-th element of a sequence~$s$. With \eqref{eq:cyclic_impl} we have that $\bigcap_{k\in\mathbb{Z}} A_k$ is contained in the set of cyclic vectors.
		Note that both $\FT$ and $\refl$ are unitary transformations, hence $(\FT\refl)^{-1}$ is a bounded operator preserving openness and denseness with respect to the norm topologies. As the sets $\left\{  s\in \ell^2(\mathbb{Z}) \mid s_k \neq 0\right\}$ are open and dense for every $k\in\mathbb{Z}$, the set $\bigcap_{k\in\mathbb{Z}} A_k$ is~\Gd. Since the set $\left\{  s\in \ell^2(\mathbb{Z}) \mid s_k \neq 0\ \forall k \in\mathbb{Z} \right\}$ is also dense, so is its preimage $\bigcap_{k\in\mathbb{Z}} A_k$ under~$\FT\refl$. 
	\end{proof}
	
	\begin{remark}
		Note that the set $\left\{  s\in \ell^2(\mathbb{Z}) \mid s_k \neq 0\ \forall k \in\mathbb{Z} \right\}$ is not open in~$\ell^2(\mathbb{Z})$.
		Thus, ``dense \Gd'' cannot be replaced in the statement of Proposition~\ref{prop:Rotation} by ``open and dense''.
	\end{remark}

	\subsection{One map with a prevalent set of cyclic vectors implies densely many}
	\label{ssec:conjugacy_cyclic}
	
	We just showed that ergodic circle rotations possess a prevalent set of cyclic vectors. Through the isomorphy of the nonatomic and separable probability space $(\set{X},\mu)$ and $([0,1], \mathrm{Leb})$ (see section~\ref{subsec:basicsMT} and also \cite[p.~61]{Hal17}) we can build an ergodic ``rotation'' on $(\set{X},\mu)$, hence a transformation with a prevalent set of cyclic vectors. 
	First, one transfers a cyclic vector via isomorphy:
	\begin{lemma}
		\label{lem:cyclic_match}
		Let $T:\set{X}\to \set{X}$ be a $\mu$-preserving transformation with a cyclic vector~$c$. For every $\mu$-preserving essential bijection $S:\set{X}\to \set{X}$, the mpt $\widetilde{T} = S^{-1}TS$ has the cyclic vector~$c\circ S$. 
	\end{lemma}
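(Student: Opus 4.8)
The plan is to use that conjugation of maps intertwines the associated Koopman operators by a unitary, and that cyclicity of a vector is preserved under such unitary intertwiners. Since $S$ is a $\mu$-preserving essential bijection, the operator $W := \cU_S\colon L^2_\mu \to L^2_\mu$, $Wg = g\circ S$, is unitary, with $W^{-1} = \cU_{S^{-1}}$ and $Wc = c\circ S$. First I would record the anti-homomorphism property of composition operators, $\cU_{AB} = \cU_B\cU_A$, which gives
\[
\cU_{\widetilde T} = \cU_{S^{-1}TS} = \cU_S\,\cU_T\,\cU_{S^{-1}} = W\,\cU_T\,W^{-1};
\]
one checks this by evaluating both sides at a point $x$: $(W\cU_TW^{-1}g)(x) = (\cU_TW^{-1}g)(Sx) = (W^{-1}g)(TSx) = g(S^{-1}TSx) = g(\widetilde Tx)$.

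Next, iterating the intertwining relation gives $\cU_{\widetilde T}^k = W\,\cU_T^k\,W^{-1}$ for every $k\ge 0$, hence $\cU_{\widetilde T}^k(Wc) = W\cU_T^k c$. Therefore $\mathrm{span}\{\cU_{\widetilde T}^k(Wc) : k\ge 0\} = W\big(\mathrm{span}\{\cU_T^k c : k\ge 0\}\big)$. Because $W$ is a bounded linear bijection of $L^2_\mu$ onto itself, it is a homeomorphism, so it maps the closure of a set to the closure of its image; thus $C_{Wc,\cU_{\widetilde T}} = W\big(\overline{\mathrm{span}\{\cU_T^k c : k\ge 0\}}\big) = W(C_{c,\cU_T})$. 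Since $c$ is cyclic for $\cU_T$ we have $C_{c,\cU_T} = L^2_\mu$, and since $W$ is onto, $W(L^2_\mu) = L^2_\mu$. Hence $C_{Wc,\cU_{\widetilde T}} = L^2_\mu$, i.e., $c\circ S$ is a cyclic vector of $\cU_{\widetilde T}$, equivalently of $\widetilde T$.

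I do not expect a genuine obstacle here: the argument is bookkeeping built from standard facts (Koopman operators of $\mu$-preserving essential bijections are unitary; bounded invertible operators preserve density and commute with closure). The two points that deserve a moment of care are getting the order of composition right in $\cU_{\widetilde T} = W\cU_TW^{-1}$, and using that ``essential bijection'' is exactly what makes $\cU_S$ invertible (so that $W(L^2_\mu) = L^2_\mu$)---if $S$ were merely measure-preserving this step could fail. One should also note in passing that $\widetilde T = S^{-1}TS$ is again $\mu$-preserving, which is immediate since $T$, $S$ and $S^{-1}$ all preserve $\mu$.
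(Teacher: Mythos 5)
Your proof is correct and follows essentially the same route as the paper: both rest on the intertwining relation $\cU_{\widetilde T}^n(c\circ S)=\cU_S(\cU_T^n c)$ and on the fact that $\cU_S$ carries the dense cyclic span of $c$ to a dense set. The only cosmetic difference is that you invoke unitarity of $\cU_S$ (as a homeomorphism commuting with closures), while the paper verifies the density-preservation step by a short $\ep$-argument using that $S$ is a measure-preserving essential bijection; the content is the same.
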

	\begin{proof}
		Note that $\cU_{\widetilde{T}}^n f = \cU_S \circ \cU_T^n \circ \cU_{S}^{-1}$for~$n\in\N$. In particular,
		\[
		\cU_{\widetilde{T}}^n (c \circ S) = \cU_S (\cU_T^n c).
		\]
		Since $\mathrm{span}\{\cU_T^n c\mid n\in\N\}$ is dense, it is sufficient to show that for any dense subspace $\smash{ \set{U} \subset L^2_\mu }$ also the space $\smash{ \cU_{S}\set{U} }$ is dense. For any fixed $\ep>0$ and $\smash{ f\in L^2_\mu }$ let $u\in\set{U}$ be such that~$\smash{ \| u- f\circ S^{-1} \|_{L^2_\mu} < \ep }$. Since $S$ is measure-preserving and an essential bijection, we have that~$\smash{ \| u\circ S - f \|_{L^2_\mu} < \ep }$. Thus, $\cU_{S}\set{U}$ is dense and the claim follows.
	\end{proof}
	
	To show denseness of transformations with specific ergodic properties, the following tool is extremely useful.
	
	\begin{lemma}[Conjugacy Lemma, {\cite[p.~77]{Hal17}}]
		\label{lem:conjugacy}
		Let $T_0$ be an aperiodic $\mu$-preserving transformation (i.e., the set of periodic points has zero measure). In the weak topology the set of all transformations of the form $S^{-1}T_0S$, where $S$ is a $\mu$-preserving essential bijection, is dense in~$\mpt$.
	\end{lemma}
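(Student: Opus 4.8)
The plan is to reduce the statement to Rokhlin's lemma for the aperiodic transformation $T_0$, via two successive weak approximations. A basic weak neighbourhood of a given $T\in\mpt$ has the form $\bigcap_{j=1}^{k}N(T,E_j,\ep)$, and since $\mu(A\triangle C)\le\mu(A\triangle B)+\mu(B\triangle C)$ it is enough, for an arbitrary finite family $E_1,\dots,E_k$ of measurable sets and an arbitrary $\ep>0$, to approximate $T$ (with respect to these data) by a combinatorial ``model'' $Q$, and then to approximate $Q$ by some conjugate $S^{-1}T_0S$; composing the two approximations then puts $S^{-1}T_0S$ in the prescribed neighbourhood of $T$.

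\textbf{Step 1: approximation by a cyclic permutation of a partition into equal pieces.} The algebra generated by $E_1,\dots,E_k$ has finitely many atoms, so, using non-atomicity of $\set{X}$, for $r$ large I would partition $\set{X}$ into sets $D_0,\dots,D_{r-1}$ of equal measure $1/r$ such that each atom---and hence, up to measure $<\ep/2$, each $E_j$---is a union of a \emph{contiguous} block $D_a,D_{a+1},\dots,D_b$ (the $O(1)$ ``leftover'' mass that cannot be split evenly into $1/r$-pieces is collected into a handful of the final pieces). The cyclic shift $Q\in\mpt$ defined by $QD_p=D_{p+1\bmod r}$ then alters each $E_j$ only near the ends of the finitely many blocks constituting it, so $\mu(QE_j\triangle E_j)<\ep$. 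It remains to weakly approximate such a $Q$ by a conjugate of $T_0$.

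\textbf{Step 2: approximating $Q$ by a conjugate of $T_0$.} Here I would invoke Rokhlin's lemma (this is the only place aperiodicity of $T_0$ enters, and I would cite it, e.g.\ from \cite{Hal17}, rather than reprove it): for any $m=r\ell$ and any $\delta>0$ there is a base $F$ with $F,T_0F,\dots,T_0^{m-1}F$ disjoint and $\mu\bigl(\bigcup_{i<m}T_0^iF\bigr)>1-\delta$. The key observation is to group the tower levels by residue modulo $r$, i.e.\ to set $G_p:=\bigcup_{t<\ell}T_0^{tr+p}F$, so that $T_0$ maps $G_p$ onto $G_{p+1\bmod r}$ \emph{exactly} for $p<r-1$, and up to a set of measure $\le 2\mu(F)$ for $p=r-1$ (the discrepancy coming only from the top level $T_0^{m-1}F$, which $T_0$ pushes out of the tower). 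Each $G_p$ has measure just below $1/r$, and the residual set $\set{X}\setminus\bigcup_pG_p$ has measure $<\delta$. Since every positive-measure piece of a standard space is again a non-atomic standard space, I would choose $D_p'\subseteq D_p$ with $\mu(D_p')=\mu(G_p)$ and build a $\mu$-preserving essential bijection $S$ that carries each $D_p'$ onto $G_p$ measure-preservingly and carries the remainder $\bigcup_p(D_p\setminus D_p')$ (of total measure $<\delta$) onto the residual set. Then $S^{-1}T_0S$ maps $D_p$ into $D_{p+1\bmod r}$ outside a set of measure $O(\delta)+O(\mu(F))$, so $\sum_p\mu\bigl((S^{-1}T_0S)D_p\triangle QD_p\bigr)\le 2\bigl(\delta+r\mu(F)\bigr)\le 2(\delta+1/\ell)$; taking $\delta$ small and $\ell$ large makes this $<\ep$, whence $S^{-1}T_0S\in\bigcap_jN(Q,E_j,\ep)$.

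The genuine content is concentrated in Rokhlin's lemma, which is the conceptual crux and would be quoted. The only point requiring real care is the bookkeeping in Step 2: the sets $G_p$ do not have measure exactly $1/r$ and the Rokhlin tower leaves a residual set, so one must shave off the small sets $D_p\setminus D_p'$ and appeal to the fact that any two non-atomic standard spaces of equal measure are measure-theoretically isomorphic in order to assemble an \emph{exact} $\mu$-preserving essential bijection $S$; this costs only an $O(\delta)$ worsening of the estimate. The remaining ingredients---finiteness of the atoms of a finitely generated algebra, the triangle inequality for symmetric differences, and the elementary count in Step 1---are routine.
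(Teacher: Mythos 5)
There is a genuine gap, and it sits exactly in Step 1, which you dismiss as routine. The cyclic shift $Q$ you construct is adapted only to the sets $E_1,\dots,E_k$, and what your construction actually yields is $\mu(QE_j\triangle E_j)<\ep$, i.e.\ that $Q$ \emph{almost fixes} each $E_j$. But your own reduction at the top requires $\mu(TE_j\triangle QE_j)$ to be small, so that the triangle inequality $\mu\big(TE_j\triangle (S^{-1}T_0S)E_j\big)\le \mu(TE_j\triangle QE_j)+\mu\big(QE_j\triangle (S^{-1}T_0S)E_j\big)$ places the conjugate in the prescribed neighbourhood of $T$. Nothing in Step 1 ties $Q$ to $T$: for a typical $T$ the set $TE_j$ is essentially unrelated to $E_j$, so $\mu(TE_j\triangle QE_j)$ can be of order one no matter how fine the partition. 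As written, Steps 1--2 only show that conjugates of $T_0$ meet every weak neighbourhood of transformations that nearly fix the given sets (in particular of the identity), which is far from density in $\mpt$.

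The repair is to replace your Step 1 by the Weak Approximation Theorem of Halmos \cite[p.~65]{Hal17} (already invoked in the proof of Proposition~\ref{prop:rank1Gd}): cyclic permutations of dyadic partitions are weakly dense in $\mpt$, so one can choose the partition \emph{and} the cyclic permutation $Q$ with $\mu(TE_j\triangle QE_j)<\ep/2$. This is a real theorem, not the elementary boundary count you give: its proof must track where $T$ sends the atoms of the partition (e.g.\ via the doubly stochastic matrix $\big(2^n\mu(TD_i\cap D_j)\big)_{i,j}$ and a combinatorial matching, or via a tower argument applied to $T$ itself, with the periodic part of $T$ handled separately). Your Step 2, by contrast, is sound: conjugating a Rokhlin tower of $T_0$ of height $r\ell$ onto the cyclic partition, with the $O(\delta+r\mu(F))$ bookkeeping and the isomorphism theorem for nonatomic standard spaces, is precisely the heart of Halmos's proof of the Conjugacy Lemma (the paper itself only cites \cite[p.~77]{Hal17} rather than reproving it). With Step 1 replaced by the genuine Weak Approximation Theorem, your two-step scheme goes through.
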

	
	It follows that the set of transformations that has prevalently many cyclic vectors is dense in MPT, because we can match every cyclic vector $c$ of $T$ with a cyclic vector $c\circ S$ of~$S^{-1}TS$. The mapping $c \mapsto c\circ S$ is a unitary transformation, since $S$ is a measure-preserving essential bijection, and thus it preserves the properties discussed in section~\ref{subsec:basicsTop}.
	
	Altogether we conclude the following denseness result.
	\begin{theorem}\label{thm:DensePrevalent}
		The collection of $\mu$-preserving transformations with a prevalent and dense \Gd{} set of cyclic vectors in~$L^2_{\mu}$ lies dense in~$\mpt$ with respect to the weak topology.
	\end{theorem}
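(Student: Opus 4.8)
The plan is to assemble three ingredients that are already in place: the prevalence and genericity of cyclic vectors for ergodic circle rotations (Proposition~\ref{prop:Rotation}), the transfer of cyclic vectors along a measure-theoretic conjugacy (Lemma~\ref{lem:cyclic_match}), and the density of conjugacy classes of aperiodic maps in the weak topology (the Conjugacy Lemma~\ref{lem:conjugacy}). The only genuine analytic content is the observation, recorded in section~\ref{subsec:basicsTop}, that both largeness notions in play---prevalence and ``contains a dense \Gd{} set''---are preserved by the unitary operators $\cU_\phi$ and $\cU_S$ induced by measure-theoretic isomorphisms. Everything else is bookkeeping.

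First I would manufacture a single transformation $T_0 \in \mpt$ on the standard space $(\set{X},\mu)$ whose set of cyclic vectors is simultaneously prevalent and contains a dense \Gd{} set. Since $(\set{X},\mu)$ is a standard measure space it is isomorphic to the circle $S^1$ with Lebesgue measure (section~\ref{subsec:basicsMT}); fix such an isomorphism $\phi:\set{X}\to S^1$ and let $T_0 := \phi^{-1}\circ R\circ\phi$, where $R$ is an ergodic (irrational) rotation of $S^1$. By Proposition~\ref{prop:Rotation} the Koopman operator of $R$ has a set $\mathcal{C}\subset L^2(S^1)$ of cyclic vectors that is prevalent and contains a dense \Gd{} set. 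Arguing exactly as in Lemma~\ref{lem:cyclic_match}, but with the isomorphism $\phi$ in place of the automorphism $S$ there, the cyclic vectors of $T_0$ are precisely $\cU_\phi\mathcal{C}=\{c\circ\phi : c\in\mathcal{C}\}$. Because $\cU_\phi:L^2(S^1)\to L^2_\mu$ is a linear isometric bijection, it carries prevalent sets to prevalent sets and dense \Gd{} sets to dense \Gd{} sets (section~\ref{subsec:basicsTop}), so $T_0$ has a prevalent and dense \Gd{} set of cyclic vectors.

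Next I would invoke the Conjugacy Lemma. An ergodic irrational rotation has no periodic points whatsoever, so $T_0$ is aperiodic; hence by Lemma~\ref{lem:conjugacy} the family $\{S^{-1}T_0 S : S\ \mu\text{-preserving essential bijection}\}$ is dense in $\mpt$ for the weak topology. For each such $S$, Lemma~\ref{lem:cyclic_match} identifies the cyclic vectors of $S^{-1}T_0 S$ with $\cU_S$ applied to the cyclic vectors of $T_0$, and $\cU_S$ is once more a unitary on $L^2_\mu$, hence preserves prevalence and the property of containing a dense \Gd{} set. Therefore every member of this dense family of transformations has a prevalent and dense \Gd{} set of cyclic vectors, which is exactly the assertion of the theorem.

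I do not expect a real obstacle: the argument is a chain of reductions. The step that deserves the most care is the stability of the two largeness notions under the conjugating unitaries $\cU_\phi$ and $\cU_S$---for prevalence one must check that a finite-dimensional probe space is mapped to a finite-dimensional probe space and that Lebesgue-null sets in affine slices are mapped to Lebesgue-null sets, which holds because these operators are linear isometries; for the dense-\Gd{} property it is immediate, since a linear homeomorphism sends a countable intersection of dense open sets to a countable intersection of dense open sets. Both facts were already stated in section~\ref{subsec:basicsTop}, so in the final write-up they can simply be cited.
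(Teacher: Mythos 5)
Your proposal is correct and follows essentially the same route as the paper's proof: conjugate an ergodic irrational rotation (Proposition~\ref{prop:Rotation}) through an isomorphism with $S^1$ to obtain an aperiodic $T_0$ with a prevalent and dense \Gd{} set of cyclic vectors, then combine the Conjugacy Lemma (Lemma~\ref{lem:conjugacy}) with Lemma~\ref{lem:cyclic_match} and the fact that the induced unitaries preserve both largeness notions. Your extra care about applying Lemma~\ref{lem:cyclic_match} with an isomorphism between different spaces, rather than an automorphism, is a fair point of detail but does not change the argument.
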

	
	\begin{proof}
		We take any fixed irrational rotation $R_{\alpha}$ on $S^1$ (that has a dense \Gd{} and prevalent set of cyclic vectors by Proposition~\ref{prop:Rotation}) and conjugate it with an isomorphy $\phi$ from $\mathbb{X}$ to $S^1$ to get an aperiodic mpt $T_0=\phi^{-1} \circ R_{\alpha} \circ \phi$ on~$\mathbb{X}$. By Lemma~\ref{lem:cyclic_match}, $T_0$ has a dense \Gd{} and prevalent set of cyclic vectors. Then we apply the Conjugacy Lemma (Lemma~\ref{lem:conjugacy}) to get a dense set of mpts, each of which have a dense \Gd{} and prevalent set of cyclic vectors, again by Lemma~\ref{lem:cyclic_match}.
	\end{proof}
	
	\begin{remark}[Weakly predictive mpts are dense]
		In Theorem~\ref{thm:DensePrevalent} we used an ergodic circle rotation to generate densely many ergodic transformations on an arbitrary Lebesgue--Rokhlin space $(\mathbb{X},\mathcal{B},\mu)$ without atoms, such that all these transformations yield a prevalent set of cyclic vectors.
		Note that for such spaces $L^2_\mu(\set{X})$ is separable. From \cite[\S3.3]{Wal00} we know that ergodic group rotations have discrete spectrum.
		Thus, also the dense set of transformations constructed in Theorem~\ref{thm:DensePrevalent} have discrete spectrum, and by Proposition~\ref{prop:DiscSpecWeakPredict} all observables for these transformations are weakly predictive.
	\end{remark}
	
	It is natural to ask whether for an ergodic transformation the possession of discrete spectrum already implies that the set of cyclic vectors is prevalent. The Representation Theorem \cite[Theorem~3.6]{Wal00} states that an ergodic measure-preserving transformation with discrete spectrum on a probability space is isomorphic to an ergodic rotation on a compact abelian group. The strong analogy between (ergodic) circle rotations and group rotations, together with the prevalence statement of Proposition~\ref{prop:Rotation} could lead one to formulate the following conjecture:
	\begin{conj}
		\label{conj:discr_spec_prevalence}
		Let the ergodic $T \in \mpt$ possess discrete spectrum. Then $\cU = \cU_T$ has a prevalent set of cyclic vectors in~$L^2_\mu$.
	\end{conj}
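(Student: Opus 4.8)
The plan is to reduce the conjecture, via the structure theory of discrete-spectrum systems, to the case of an ergodic rotation on a compact abelian group, and there to repeat almost verbatim the argument of Proposition~\ref{prop:Rotation}. First I would invoke the Representation Theorem \cite[Theorem~3.6]{Wal00}: an ergodic $T$ with discrete spectrum on a standard probability space is isomorphic to an ergodic rotation $R_{g_0}\colon g\mapsto g_0g$ on a compact abelian group $G$ with normalized Haar measure $m$, where $\overline{\langle g_0\rangle}=G$. Since $L^2_\mu$ is separable, so is $L^2_m(G)$, hence $\hat G$ is countable and $G$ is metrizable. As in the proof of Theorem~\ref{thm:DensePrevalent}, Lemma~\ref{lem:cyclic_match} applied to an isomorphy $\set{X}\to G$ shows that the cyclic vectors of $\cU_T$ are the image, under a unitary transformation, of the cyclic vectors of $\cU:=\cU_{R_{g_0}}$; since unitary transformations preserve prevalence and dense~\Gd{} sets (Section~\ref{subsec:basicsTop}), it suffices to treat $\cU$ on $L^2_m(G)$.

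Next I would characterize the cyclic vectors. The characters $\chi\in\hat G$ form an orthonormal eigenbasis of $\cU$ with eigenvalues $\chi(g_0)$, and these are pairwise distinct, because $\chi\mapsto\chi(g_0)$ is injective precisely when $g_0$ generates a dense subgroup, i.e.\ when $R_{g_0}$ is ergodic. Writing $f=\sum_\chi a_\chi\chi$, I claim that $f$ is cyclic if and only if $a_\chi\neq0$ for all $\chi$. Necessity is immediate: if $a_{\chi_0}=0$ then $\chi_0\perp\cU^n f$ for every $n\ge0$, hence $\chi_0\perp C_f$. For sufficiency I would argue as in Proposition~\ref{prop:Rotation}: the forward orbit $\{g_0^n:n\ge0\}$ is dense in $G$ (a closed sub-semigroup of a compact group is a subgroup, so its closure is a closed subgroup containing $g_0$), and translation is strongly continuous on $L^2_m(G)$; consequently $C_f=\overline{\mathrm{span}\{\cU^n f:n\ge0\}}$ contains every translate of $f$, and the closed span of the translates of $f$ equals $\overline{\mathrm{span}\{\chi:a_\chi\neq0\}}$, which is all of $L^2_m(G)$ once no coefficient vanishes. (Equivalently, one can route this through Theorem~\ref{thm:weakpred}: the trace measure associated with $f$ is the purely atomic measure $\sum_\chi|a_\chi|^2\delta_{\chi(g_0)}$, so its absolutely continuous part vanishes, the Szeg{\H o} condition holds trivially, Theorem~\ref{thm:KolmogorovDensity} makes the polynomials dense in the cyclic subspace, and the spectral picture identifies $C_f$ with $\overline{\mathrm{span}\{\chi:a_\chi\neq0\}}$.)

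With the characterization in hand the two conclusions are routine, again in parallel with Proposition~\ref{prop:Rotation}. Enumerate $\hat G=\{\chi_j\}_{j\in\N}$ and take the probe vector $e:=\sum_j2^{-j}\chi_j\in L^2_m(G)$, all of whose character coefficients are nonzero (a real-valued variant is obtained as in Proposition~\ref{prop:Rotation}). For any $f=\sum_j a_j\chi_j$ the coefficient of $\chi_j$ in $f+\lambda e$ is $a_j+2^{-j}\lambda$, which vanishes only for the single value $\lambda=-2^ja_j$; hence $f+\lambda e$ is cyclic for all but countably many $\lambda\in\C$, i.e.\ for Lebesgue-almost every $\lambda$, so $\C e$ is a probe space in the sense of Definition~\ref{def:prevalence} and the cyclic vectors form a prevalent set. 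For the dense~\Gd{} statement, each $A_j:=\{f:a_j\neq0\}$ is the complement of a proper closed subspace, hence open and dense, and by the characterization the cyclic vectors are exactly $\bigcap_{j\in\N}A_j$, which is dense~\Gd{} by the Baire category theorem since $L^2_m(G)$ is complete. Transporting back through the isomorphism of the first step then yields the conjecture for $T$.

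The step I expect to require the most care is the sufficiency half of the characterization of cyclic vectors: one needs the density of the \emph{forward} orbit of $g_0$ and the description of translation-invariant closed subspaces of $L^2_m(G)$ (equivalently, Kolmogorov's Density Theorem together with the identification of the trace measure), and ergodicity enters essentially here through the simplicity of the spectrum of $\cU$. Everything else is a direct transcription of the $S^1$ argument, with Fourier series on $S^1$ replaced by the character expansion on $G$ and $\mathbb{Z}=\hat{S^1}$ replaced by $\hat G$.
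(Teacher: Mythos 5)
There is no proof in the paper to compare against: the statement you are proving is posed there explicitly as an open conjecture (Conjecture~\ref{conj:discr_spec_prevalence}), motivated by exactly the analogy you exploit, namely the Representation Theorem \cite[Theorem~3.6]{Wal00} together with Proposition~\ref{prop:Rotation}. So your proposal should be judged on its own, and as far as I can check it is sound and would settle the conjecture affirmatively. The reduction to an ergodic rotation $R_{g_0}$ on a compact metrizable abelian group $G$ is legitimate (separability of $L^2_\mu$ gives $\widehat{G}$ countable), and transporting cyclicity, prevalence and dense~\Gd{} through the induced unitary is exactly how the paper itself argues in Theorem~\ref{thm:DensePrevalent} via Lemma~\ref{lem:cyclic_match}. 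The heart of the matter is, as you say, the characterization ``$f=\sum_\chi a_\chi\chi$ is cyclic iff $a_\chi\neq 0$ for all $\chi$''. Necessity is immediate, and both of your routes to sufficiency work: (i) the closure of the forward orbit $\{g_0^n:n\ge 0\}$ is a closed subsemigroup of a compact group, hence a subgroup, hence all of $G$ by ergodicity, and strong continuity of translation puts every translate of $f$ into $C_f$; one then needs the convolution step $\int_G f(t^{-1}\cdot)\,\chi_0(t)\,dm(t)=a_{\chi_0}\chi_0$ (a limit of linear combinations of translates) to conclude $C_f=\overline{\mathrm{span}\{\chi: a_\chi\neq 0\}}$ --- this is the detail you flag and it should be written out; (ii) alternatively, the trace measure $\nu=\sum_\chi|a_\chi|^2\delta_{\chi(g_0)}$ is purely atomic, so Theorem~\ref{thm:KolmogorovDensity} makes the analytic polynomials dense in $L^2_\nu$, and then approximating the indicators of the atoms by polynomials produces in $C_f$ an eigenvector for each eigenvalue $\chi(g_0)$; since ergodicity makes $\chi\mapsto\chi(g_0)$ injective, these eigenvalues are simple, so each $\chi$ with $a_\chi\neq 0$ lies in $C_f$. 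Route (ii) is the cleaner one to make fully rigorous, and the simplicity of the spectrum is indeed where ergodicity is indispensable.

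Given the characterization, your prevalence and genericity arguments are direct transcriptions of Proposition~\ref{prop:Rotation}: the one-dimensional probe $\C e$ with $e=\sum_j 2^{-j}\chi_j$ works because non-cyclicity of $f+\lambda e$ forces $\lambda=-2^j a_j$ for some $j$, a countable (hence Lebesgue-null) set of $\lambda$'s, and the cyclic vectors are exactly $\bigcap_j\{f: a_j\neq 0\}$, a countable intersection of open dense sets. In short: this is not a comparison with a different paper proof but a genuine resolution of what the paper leaves open; the only items to tighten in a write-up are the translation-invariant-subspace (or Kolmogorov/simple-spectrum) step and a sentence confirming that the measure-theoretic isomorphism from the Representation Theorem induces a unitary intertwiner on the standard-space level, as used implicitly in Lemma~\ref{lem:cyclic_match}.
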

	
	In Section~\ref{subsec:RankOne} we will discuss another type of systems that possesses cyclic vectors. 
	
	\subsection{A generic MPT has a dense {\Gd} set of cyclic vectors}\label{subsec:BaireMethod}

	By Theorem~\ref{thm:DensePrevalent} the collection of measure-preserving transformations with a dense \Gd{} set of cyclic vectors lies dense in~$\mpt$ with respect to the weak topology. In fact, we can even prove that it is a dense {\Gd} set in~$\mpt$. We use similar methods as in \cite[Theorem~8.26]{Nad20}, where it is shown that the collection of measure-preserving transformations admitting a cyclic vector is a dense {\Gd} set in $\mpt$ with respect to the weak topology. 
	
	\begin{proposition} \label{prop:DenseGd}
		The collection of $\mu$-preserving transformations with a dense {\Gd} set of cyclic vectors in $L^2_\mu$ is a dense {\Gd} set in $\mpt$ with respect to the weak topology.
	\end{proposition}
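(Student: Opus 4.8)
The plan is to split the statement into a soft measure-theoretic reduction and a topological argument carried out in the product space $\mpt\times L^2_\mu$, in the spirit of \cite[Theorem~8.26]{Nad20}. Since $L^2_\mu$ is separable, fix a countable dense set $\{h_n:n\in\N\}\subseteq L^2_\mu$ and let $\mathcal{P}$ be the countable set of polynomials with coefficients in $\mathbb{Q}+i\mathbb{Q}$, writing $p(\cU_T)f=\sum_{k\ge0}p_k\,f\circ T^k$. For a \emph{fixed} mpt $T$, the observable $f$ is a cyclic vector of $\cU_T$ if and only if for all $n,j\in\N$ there is $p\in\mathcal{P}$ with $\|p(\cU_T)f-h_n\|_{L^2_\mu}<1/j$; since $f\mapsto p(\cU_T)f$ is bounded, the set of cyclic vectors of $\cU_T$ equals $\bigcap_{n,j}\bigcup_{p\in\mathcal{P}}\{f:\|p(\cU_T)f-h_n\|<1/j\}$ and is thus always a \Gd{} subset of $L^2_\mu$. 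Hence ``$\cU_T$ has a dense \Gd{} set of cyclic vectors'' is equivalent to ``the cyclic vectors of $\cU_T$ are dense in $L^2_\mu$''. Writing $\mathcal{G}$ for the collection of such $T$, Theorem~\ref{thm:DensePrevalent} already gives that $\mathcal{G}$ is dense in $\mpt$ with the weak topology, so it remains to show that $\mathcal{G}$ is a \Gd{} subset of $\mpt$.

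The key step is to pass to the product. Let $\mathcal{C}\subseteq\mpt\times L^2_\mu$ consist of the pairs $(T,f)$ for which $f$ is a cyclic vector of $\cU_T$; by the description above, $\mathcal{C}=\bigcap_{n,j}U_{n,j}$ with $U_{n,j}:=\bigcup_{p\in\mathcal{P}}\{(T,f):\|p(\cU_T)f-h_n\|<1/j\}$. Each $U_{n,j}$ is open in the product topology, since for fixed $p$ the map $(T,f)\mapsto p(\cU_T)f$ is jointly continuous: if $(T_m,f_m)\to(T,f)$ then for every $k$
\[
\|f_m\circ T_m^k - f\circ T^k\|_{L^2_\mu}\le\|f_m-f\|_{L^2_\mu}+\|f\circ T_m^k - f\circ T^k\|_{L^2_\mu}\longrightarrow 0,
\]
the first summand vanishing because $\cU_{T_m}^k$ is isometric, the second by Lemma~\ref{lem:weakapprox}. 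Hence $\mathcal{C}$ is a \Gd{} set in $\mpt\times L^2_\mu$, and for each fixed $T$ its slice $\mathcal{C}_T=\{f:(T,f)\in\mathcal{C}\}$ is precisely the set of cyclic vectors of $\cU_T$.

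The conclusion then follows by slicing and projecting. For fixed $T$ one has $\mathcal{C}_T=\bigcap_{n,j}(U_{n,j})_T$ with each $(U_{n,j})_T$ open in $L^2_\mu$; since $L^2_\mu$ is a Baire space, $\mathcal{C}_T$ is dense if and only if each open set $(U_{n,j})_T$ is dense, equivalently if and only if each $(U_{n,j})_T$ meets every ball $B(h_m,1/\ell)$. Therefore
\[
\mathcal{G}=\bigcap_{n,j,m,\ell\in\N}\bigl\{T\in\mpt: (U_{n,j})_T\cap B(h_m,1/\ell)\ne\emptyset\bigr\},
\]
and each set in this countable intersection is open in $\mpt$: it is the image of the open set $U_{n,j}\cap\bigl(\mpt\times B(h_m,1/\ell)\bigr)$ under the coordinate projection $\mpt\times L^2_\mu\to\mpt$, and coordinate projections are open maps. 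Thus $\mathcal{G}$ is \Gd, which together with the reduction proves the proposition. The same argument applies verbatim to cyclic vectors of $\cU^{-1}$, as needed for the predictiveness application, because Lemma~\ref{lem:weakapprox} is symmetric in $T$ and $T^{-1}$.

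The step I expect to carry the content is the joint-continuity estimate above, i.e.\ that weak convergence $T_m\to T$ together with $\|f_m-f\|_{L^2_\mu}\to 0$ forces $p(\cU_{T_m})f_m\to p(\cU_T)f$; everything else is soft (Baire category and openness of coordinate projections), and the analytic input there is exactly Lemma~\ref{lem:weakapprox}. Conceptually, the obstacle is choosing the right arena: a naive attempt to realize $\mathcal{G}$ directly as a countable intersection of weakly open subsets of $\mpt$ through a scheme of finite-precision ``approximate cyclic vectors'' has to confront the real difficulty of upgrading such approximate vectors to genuine cyclic ones, whereas in $\mpt\times L^2_\mu$ the cyclicity relation is already a \Gd{} set and openness of projections does the rest.
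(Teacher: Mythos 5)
Your proposal is correct and takes essentially the same approach as the paper: denseness is imported from Theorem~\ref{thm:DensePrevalent}, and the {\Gd} part rests on the identical ingredients---a countable dense family in $L^2_\mu$, (rational) polynomials in $\cU_T$, openness in the weak topology supplied by Lemma~\ref{lem:weakapprox}, and Baire category in $L^2_\mu$ to pass from dense sets of approximate cyclic vectors to a dense (automatically {\Gd}) set of genuine ones. Your packaging via the product space $\mpt\times L^2_\mu$ and open coordinate projections, and your use of a single target $h_n$ per level instead of the paper's finite batches $f_1,\dots,f_n$ with a common witness $g$, are presentational variants of the paper's direct description of the same weakly open sets as unions over $g$ in a ball and over polynomials of bounded degree.
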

	
	\begin{proof}
		By Theorem~\ref{thm:DensePrevalent} we already know that the collection of measure-preserving transformations with a dense {\Gd} set of cyclic vectors is dense in~$\mpt$.
		
		To show that it is a {\Gd} set, we let $\{f_j\}_{j\in \N}$ be a dense set in $L^2_{\mu}$. \revision{For $g\in L^2_{\mu}$, $m,N\in \N$ we introduce the set 
			\begin{align*}
				\tilde{V}_i(m,N,g) & = \Meng{\cU \in \mathcal{B}(L^2_\mu)}{\exists \lambda^{(i)}_k \in \C,\, k=0,\dots, N: \, \norm{f_i - \sum^N_{k=0}\lambda^{(i)}_k \cU^kg}_{L^2_{\mu}} < \frac{1}{m}} \\
				& = \bigcup_{p \in P_N}  \Meng{\cU \in \mathcal{B}(L^2_\mu)}{\norm{f_i - p(\cU)g}_{L^2_{\mu}} < \frac{1}{m}}, 
			\end{align*}
			where $P_N$ denotes the set of polynomials of degree not larger than $N$. Such a set $\tilde{V}_i(m,N,g)$ is open in the space $\mathcal{B}(L^2_\mu)$ of bounded linear operators with respect to the strong operator topology because $\cU_n \to \cU$ implies $p(\cU_n) \to p(\cU)$ for every polynomial $p$ by Lemma~\ref{lem:weakapprox}(5). Since the weak topology on $\mpt$ coincides with the topology induced from the strong operator topology on Koopman operators (cf.\ Lemma~\ref{lem:weakapprox}), we see that the sets
			\begin{equation*}
				V_i(m,N,g) = \Meng{T \in \mpt}{\exists \lambda^{(i)}_k \in \C,\, k=0,\dots, N: \, \norm{f_i - \sum^N_{k=0}\lambda^{(i)}_k \cU^k_Tg}_{L^2_{\mu}} < \frac{1}{m}}.
			\end{equation*}
			are open with respect to the weak topology.} Thus, the finite intersections 
		\begin{align*}
			& V(m,n,N,g) \coloneqq \bigcap_{i=1}^n V_i(m,N,g) \\
			=&\Meng{T\in \mpt}{\forall i=1,\ldots , n \, \exists \lambda^{(i)}_k \in \C,\,k=1,\ldots,N:\, \norm{f_i - \sum^N_{k=0}\lambda^{(i)}_k \cU^k_Tg}_{L^2_{\mu}} < \frac{1}{m}}
		\end{align*}
		are also open. \revision{With this terminology, the collection of mpts with a dense {\Gd} set of cyclic vectors is given by the {\Gd} set 
			\begin{equation*}
				\mathcal{T} := \bigcap_{m\in \N}\bigcap_{n \in \N}\bigcap_{t\in \N} \bigcap_{s \in \N} \bigcup_{N \in \N} \bigcup_{g \in L^2_{\mu}, \norm{g-f_s}_{L^2_{\mu}}<\frac{1}{t}} V(m,n,N,g),
			\end{equation*}
			where it remains to show that $T\in \mathcal{T}$ has a dense \Gd{} set of cyclic vectors in $L^2_\mu$. Heuristically, this dense {\Gd} set of cyclic vectors is obtained by the intersections over $t$ and $s$ which guarantee for any function $f_s$ from our dense family and any $t\in \N$ the existence of a cyclic vector that is $\frac{1}{t}$-close to~$f_s$.} To show this in detail, let $B_r(f) \subset L^2_\mu$ denote the open ball with radius $r>0$ and center $f\in L^2_\mu$ and consider 
		\begin{align*}
			\mathcal{T}(m,n) &= \bigcap_{t\in \N} \bigcap_{s \in \N} \bigcup_{N \in \N} \bigcup_{g \in L^2_{\mu}, \norm{g-f_s}_{L^2_{\mu}}<\frac{1}{t}} V(m,n,N,g) \\
			&= \left\{ T\in\mpt \,\Big\vert\,
			\begin{array}{l}
				\forall s,t\in\N \ \exists g\in B_{1/t}(f_s) \text{ and } \exists N\in\N \text{ s.t. }\forall i=1,\ldots,n \\
				\exists p\in P_N \text{ with } \norm{p(\cU_T)g - f_i}_{L^2_\mu} < \frac1m
			\end{array}
			\right\}.
		\end{align*}
		The functions $g \in L^2_\mu$ that appear in the characterization of $\mathcal{T}(m,n)$ are candidates for cyclic vectors upon intersection over all~$m,n\in\N$. We denote this set by $C(T,m,n) \subset L^2_\mu$, more precisely, for a fixed $T\in\mathcal{T}(m,n)$ let
		\[
		C(T,m,n) := \bigcap_{t\in\N} \bigcup_{s\in\N} \left[ B_{1/t}(f_s) \cap \left\{ g \in L^2_\mu \,\Big\vert\, 
		\begin{array}{l}
			\exists N\in\N \text{ s.t. } \forall i=1,\ldots,n \ \exists p\in P_N\\
			\text{ s.t. } \norm{p(\cU_T)g - f_i}_{L^2_\mu} < \frac1m
		\end{array}
		\right\} \right].
		\]
		For $T \in \mathcal{T} = \bigcap_{m,n \in\N} \mathcal{T}(m,n)$, the set of cyclic vectors is given by~$\bigcap_{m,n\in\N} C(T,m,n)$. To see that this is a \Gd{} set, it is sufficient to show that the $C(T,m,n)$ are themselves \Gd{} for every~$m,n\in\N$. This is straightforward by noting that $p(\cU_T)$ is a continuous mapping on~$L^2_\mu$ for every~$p\in P_N$ and that
		\[
		C(T,m,n) = \bigcap_{t\in\N} \bigcup_{s\in\N} \left[ B_{1/t}(f_s) \cap \bigcap_{i=1}^n \bigcup_{N\in\N} \bigcup_{p\in P_N} p(\cU_T)^{-1} B_{1/m}(f_i) \right].
		\]
		Arbitrary unions and finite intersections of open sets are open, hence $C(T,m,n)$ and with it the set of cyclic vectors for $T \in \mathcal{T}$ is \Gd{}, concluding the proof.
	\end{proof}
	
	While this result gives us the genericity of transformations with cyclic vectors, it does not provide us with a criterion to check if a given transformation has cyclic vectors. In the following section we meet such a sufficient condition to guarantee the existence of cyclic vectors.
	
	\subsection{Rank one systems}\label{subsec:RankOne}
	
	In this section we discuss \emph{rank one systems} which constitute a generic class of measure-preserving transformations possessing cyclic vectors. We start with the admittedly technical definition of rank one systems.\footnote{According to Ferenczi it is the lecturer's nightmare to define rank one systems \cite[p.~40]{ferenczi1997systems}. We refer to his survey article for several equivalent definitions of the rank one property.} 
	
	\begin{definition}[Rank one system]
		\label{def:rankone}
		A system $T:(\mathbb{X},\mu) \to (\mathbb{X},\mu)$ is of \emph{rank one} if for any measurable set $A\subset \mathbb{X}$ and any $\varepsilon>0$ there exist $F\subset \mathbb{X}$, $h\in \mathbb{Z}^+$ and a measurable subset $A^{\prime}\subset \mathbb{X}$ such that
		\begin{itemize}
			\item the sets $T^kF$, $k=0,1,\ldots,h-1$ are disjoint;
			\item $\mu(A\triangle A^{\prime})<\varepsilon$;
			\item $\mu(\bigcup^{h-1}_{k=0}T^kF)>1-\varepsilon$;
			\item $A^{\prime}$ is measurable with respect to the partition $\xi(\mathcal{T})$ formed by the sets $F,TF,\ldots , T^{h-1}F$ and $\mathbb{X} \setminus \bigcup^{h-1}_{k=0}T^kF$.
		\end{itemize}
		One calls $\mathcal{T}=\{F,TF,\ldots , T^{h-1}F\}$ a \emph{Rokhlin tower} (or \emph{column}) of height $h$ and with base $F$. 
	\end{definition}
	Thus, a system is rank one if for any fixed set and a tolerance one can choose some set that has disjoint iterates which achieve filling the space and approximating the first set both within the tolerance.
	Furthermore, one says that a sequence of towers $\mathcal{T}_n$ is \emph{exhaustive} if $\xi(\mathcal{T}_n)$ converges to the decomposition into points as $n\to \infty$, that is, for every measurable
	set $A\subseteq \mathbb{X}$ and for every $n \in \N$ there exists a set $A_n$, which is a union of elements of $\xi(\mathcal{T}_n)$, such that $\lim_{n\to \infty}\mu(A \triangle A_n)=0$. By definition, rank one systems have exhaustive sequences of towers.
	
	If $T\in\mpt$ is rank one, so is~$T^{-1}$. To see this, note that for every $\ep>0$ and associated tower $\mathcal{T}$ with base $F$ for $T$ we get the same tower with base $T^{h-1}F$, now for $T^{-1}$, satisfying the requirements of Definition~\ref{def:rankone}.
	
	Examples of rank one systems include rotations and odometer transformations; see our example in section~\ref{subsec:odometer}. In general, it is true that discrete spectrum together with ergodicity implies rank one \cite{Junco_1976}.
	Rank one systems were introduced and actively studied in the framework of so-called \emph{cutting-and-stacking constructions} in ergodic theory (see e.g.~\cite[section~5.2]{KatokThouvenot}). Here, one can view a rank one system as a transformation obtained from a cutting-and-stacking construction with a single tower but there is a scarcity of explicit examples. 
	
	Our interest in the rank one property is based on the fact that it is a sufficient condition to guarantee the existence of cyclic vectors.\footnote{\label{foot:specmultiplicity}We also mention that there is a general notion of \emph{rank} of a measure-preserving transformation. This rank is always greater than or equal to the spectral multiplicity of the associated Koopman operator. Since we are mainly interested in systems with simple spectrum, we focus on the case of rank one. We refer the interested reader to \cite[chapter~5]{Nad20}.}
	
	\begin{proposition} \label{prop:rank1CyclicNew}
		A rank one system has cyclic vectors.
	\end{proposition}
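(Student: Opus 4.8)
The plan is to build a cyclic vector explicitly out of the tower structure, reducing the whole statement to a finite-dimensional linear-algebra fact that is then pushed to the limit.

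\textbf{Step 1 (reduction to level indicators).} Since $T$ is rank one it carries an exhaustive sequence of Rokhlin towers, and a subsequence of an exhaustive sequence is again exhaustive, so I may pass to a subsequence in which the heights $h_n$ grow as fast as I wish; using the equivalent descriptions of the rank one property surveyed in \cite{ferenczi1997systems}, I may also assume that $\mathcal{T}_{n+1}$ refines $\mathcal{T}_n$ with nested bases $F_{n+1}\subset F_n$. Applying Definition~\ref{def:rankone} to $T^{-1}$ (also rank one), I obtain towers $\mathcal{T}_n=\{F_n,T^{-1}F_n,\dots,T^{-(h_n-1)}F_n\}$ whose level indicators are precisely $\cU^{j}\mathbf{1}_{F_n}$, $0\le j<h_n$, for $\cU=\cU_T$; let $V_n$ be the span of these $h_n$ functions, i.e.\ the functions constant on each level of $\mathcal{T}_n$ and vanishing off the tower. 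Because the towers fill the space, the indicator of any measurable set is an $L^2_\mu$-limit of elements of $\bigcup_n V_n$, so $\overline{\bigcup_n V_n}=L^2_\mu$. Since every cyclic subspace $C_f$ is $\cU$-invariant, it therefore suffices to exhibit one $f\in L^2_\mu$ with $\mathbf{1}_{F_m}\in\overline{C_f}$ for every $m$: then $\overline{C_f}\supseteq V_m$ for each $m$, hence $\overline{C_f}=L^2_\mu$.

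\textbf{Step 2 (the candidate vector and the finite-dimensional core).} I would take $f:=\sum_{n\ge1}c_n\mathbf{1}_{F_n}$ with $c_n>0$ chosen so small that $\sum_n c_n\sqrt{\mu(F_n)}<\infty$, whence $f\in L^2_\mu$ and $\cU^{j}f=\sum_n c_n\mathbf{1}_{T^{-j}F_n}$. Fix $m$ and a large $n\ge m$. Since $\mathcal{T}_n$ refines $\mathcal{T}_m$ and $F_n\subset F_m$, the set $F_m$ is a union of levels of $\mathcal{T}_n$ containing the base level, so $\mathbf{1}_{F_m}\in V_n$, and the orthogonal projection $P_n f$ of $f$ onto $V_n$ is a level function whose value on the base level is $\sum_{k\le n}c_k>0$ (every summand with index $\le n$ contributes positively there, as $F_k\supset F_n$). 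On the $h_n$-dimensional space $V_n$ the operator $\cU$ acts, away from the top level, simply as the one-step shift of the level index; hence, up to the contributions of iterates that have left the tower, the Krylov family $\{P_n\cU^{j}f:0\le j<h_n\}$ is triangular with respect to the level ordering and therefore spans $V_n$. In particular $\mathbf{1}_{F_m}$ is a finite linear combination of the $P_n\cU^{j}f$; letting $n\to\infty$ and controlling the error terms then gives $\mathbf{1}_{F_m}\in\overline{C_f}$, which by Step~1 finishes the proof.

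\textbf{Step 3 (the main obstacle, and a shortcut).} The delicate point is exactly the clause ``up to the contributions of iterates that have left the tower'': one application of $\cU$ sends the top level of $\mathcal{T}_n$ out of the tower, and the leaked mass can re-enter near the base, so $P_n\cU^{j}f$ is only approximately a shifted copy of $P_n f$. Making this rigorous requires estimating, simultaneously, (i) the cumulative leaked-and-re-entered mass up to time $h_n$ — which one keeps small by making the tail $\sum_{k>n}c_k\mathbf{1}_{F_k}$ negligible and by letting $h_{n+1}/h_n$ grow fast — and (ii) the size of the coefficients in the triangular inversion, which relies on the level profile of $P_n f$ being sufficiently peaked at the base level; balancing (i) against (ii) for every fixed $m$ is where the freedom in the subsequence and in the $c_n$ is spent, and I expect this bookkeeping, together with the (standard but not entirely trivial) passage from the abstract towers of Definition~\ref{def:rankone} to a nested exhaustive refinement, to be the technical heart of the argument. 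If one prefers to avoid the construction, the statement also follows from the classical fact that the rank of a measure-preserving transformation bounds from above the spectral multiplicity of its Koopman operator (see \cite[Chapter~5]{Nad20} and footnote~\ref{foot:specmultiplicity}): rank one forces multiplicity one, which is precisely the existence of a cyclic vector.
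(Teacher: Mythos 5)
Your constructive route (Steps 1--2) is not a proof as it stands: the decisive claim in Step 2 --- that the family $\{P_n\cU^{j}f:0\le j<h_n\}$ is ``triangular\dots and therefore spans $V_n$'' and that one can then extract $\mathbf{1}_{F_m}$ ``letting $n\to\infty$ and controlling the error terms'' --- is exactly what you concede in Step 3 you have not done. The difficulty is real, not cosmetic: for $k<n$ the set $T^{-j}F_k$ is a union of levels scattered through the $n$-th tower, so $\cU^j f=\sum_k c_k\chi_{T^{-j}F_k}$ is not a shifted copy of $f$ up to a small tail, and the coefficients produced by inverting the (perturbed) triangular system are not a priori bounded; the leaked/re-entered mass gets multiplied by these possibly huge coefficients, and no estimate is offered that the product tends to zero. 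The auxiliary reductions (passing to nested, refining towers with $F_{n+1}\subset F_n$) are also asserted rather than justified. So the heart of the argument is missing. Your fallback --- citing that rank bounds spectral multiplicity, hence rank one implies simple spectrum, i.e.\ a cyclic vector --- is a correct reduction to the literature, but it simply invokes the classical theorem of which the proposition is a special case (the paper itself mentions this fact only in a footnote), so it does not constitute an independent argument.

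For comparison, the paper proves the proposition by contradiction, sidestepping all the bookkeeping you identify: if $T$ had no cyclic vector, then by a lemma of Katok--Stepin (see also Nadkarni) there exist orthonormal $f_1,f_2\in L^2_\mu$ with $d^2_{L^2_\mu}(f_1,C_g)+d^2_{L^2_\mu}(f_2,C_g)\ge 1$ for \emph{every} $g$; but for the base $F_n$ of an exhaustive tower the cyclic subspace $C_{\chi_{F_n}}$ contains all level indicators of that tower exactly (no approximate triangular inversion needed, since $\cU^i\chi_{F_n}$, $0\le i<h_n$, \emph{are} the levels), so by exhaustivity both distances are below $\tfrac12$ for large $n$, a contradiction. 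If you want to salvage your explicit construction of a cyclic vector $f=\sum_n c_n\mathbf{1}_{F_n}$, you would need to supply the quantitative leakage-versus-inversion estimates you sketch in Step 3, which is a genuinely harder task than the statement requires.
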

	
	\begin{proof}
		We combine arguments from \cite[chapter 4]{Nad20} and \cite[Proposition 5.8]{KatokThouvenot}. 
		
		Let $T$ be a rank one transformation. Recall that $\smash{ C_g = \overline{\mathrm{span} \{ g,\cU_T g, \cU^2_T g,\ldots\}} }$ denotes the cyclic subspace of~$g\in L^2_{\mu}$. Furthermore, $d_{L^2_\mu}(f,C_g)$ denotes the $L^2_{\mu}$ distance of $f\in L^2$ from $C_g$. 
		
		Suppose that $T$ does not admit cyclic vectors. Then there exist orthogonal unit vectors $f_1,f_2 \in L^2_{\mu}$ such that 
		\begin{equation}\label{eq:ContraProjDist}
			d^2_{L^2_\mu}(f_1,C_g) + d^2_{L^2_\mu}(f_2,C_g) \geq 1
		\end{equation}
		for every $g\in L^2_{\mu}$ (see \cite[Lemma~3.1]{KatokStepin} or \cite[Theorem~4.3]{Nad20} for a proof of a more general result).  
		As in the definition of the rank one property, we consider an exhaustive sequence of towers $\mathcal{T}_n$ with base $F_n$ and height $h_n$ approximating~$T$. Then for each $n$ the images of the characteristic function $\chi_{F_n}$ under $\cU^i_T$, $i=0,1,\ldots,h_n-1$, are characteristic functions of the disjoint levels of the tower~$\mathcal{T}_n$. Then for each $n$ there is a cyclic subspace $C_{\chi_{F_n}}$ which contains all characteristic functions of the levels of the tower and their linear combinations. Since our sequence of towers is exhaustive, we can choose $n$ sufficiently large such that $d_{L^2_\mu}(f_1,C_{\chi_{F_n}})<\frac{1}{2}$ and $d_{L^2_\mu}(f_2,C_{\chi_{F_n}})<\frac{1}{2}$. We obtained a contradiction to~\eqref{eq:ContraProjDist}. Hence, $T$ has cyclic vectors.
	\end{proof}
	
	In the converse direction to Footnote~\ref{foot:specmultiplicity}, the simplicity of the spectrum does not imply the rank one property; see \cite[section 2.4.2]{ferenczi1997systems} for various types of counterexamples. 
	Interestingly, there are also mixing systems of rank one; see e.g.~\cite[section 1.4.4]{ferenczi1997systems} for a celebrated construction by Ornstein.
	We would like to emphasize the following:
	\begin{remark}[Mixing can be predictive]
		\label{rem:mixingPred}
		On the one hand, measure-theoretic (strong) mixing implies gaining asymptotic independence on the past. Intuitively, this could lead to the impossibility of asymptotic predictiveness, since the observation~$f\circ T^{-k}$ gives ``increasingly less information'' about $f\circ T$ as $k$ grows. On the other hand, the existence of mixing systems that are rank one---and thus possess cyclic vectors---tells us that this intuition is not necessarily correct (recall also the discussion after Proposition~\ref{prop:DiscSpecWeakPredict}). 
	\end{remark}
	
	Given the significance of cyclic vectors for predictiveness, it is an interesting question if a rank one system has a prevalent set of cyclic vectors. We do not provide an answer to this question, but note that if it were positive, then also Conjecture~\ref{conj:discr_spec_prevalence} would be true, as ergodic systems with discrete spectrum are rank one.
	
	We provide a proof of the following folklore result by adapting proofs of similar results in \cite[Theorem 1.1]{KatokStepin} and \cite[Theorem~3.1]{RatErg}.
	
	\begin{proposition} \label{prop:rank1Gd}
		The collection of rank one systems is a dense {\Gd} set in $\mpt$ with respect to the weak topology.
	\end{proposition}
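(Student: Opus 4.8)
The plan is to establish the two assertions—density and the \Gd{} property—separately; density will be immediate from what is already in place, while the \Gd{} part is a Baire-category argument of Katok--Stepin type (cf.\ \cite{KatokStepin,RatErg}). For density, I would first record that the rank one property is a conjugacy invariant: if $T$ is rank one and $S\in\mpt$, then a Rokhlin tower $\{F,TF,\dots,T^{h-1}F\}$ witnessing Definition~\ref{def:rankone} for the set $SA$ and tolerance $\ep$ is carried by $S^{-1}$ onto the tower $\{S^{-1}F,(S^{-1}TS)S^{-1}F,\dots,(S^{-1}TS)^{h-1}S^{-1}F\}$, which witnesses the property for $S^{-1}TS$ and $A$; here one only uses that $S$ preserves $\mu$ and is an essential bijection, so measures and symmetric differences are preserved. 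Since ergodic circle rotations are rank one, the aperiodic transformation $T_0=\phi^{-1}\circ R_\alpha\circ\phi$ on $\mathbb X$ built in the proof of Theorem~\ref{thm:DensePrevalent} is rank one, and by the Conjugacy Lemma (Lemma~\ref{lem:conjugacy}) its conjugacy class is weakly dense in $\mpt$; by the previous sentence all its members are rank one. Hence the rank one systems are dense in $\mpt$.

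For the \Gd{} property I would fix a countable family $\{A_j\}_{j\in\N}$ dense in the measure algebra of $(\mathbb X,\mu)$ for the metric $d(A,B)=\mu(A\triangle B)$, and a countable family $\{F_l\}_{l\in\N}$ dense in the same sense (both exist since $L^2_\mu$ is separable), and for $m,j\in\N$ set
\[
R(m,j):=\bigcup_{h\in\N}\ \bigcup_{l\in\N}\ \bigcup_{S\subseteq\{0,\dots,h-1\}} W(m,h,l,S),
\]
where $W(m,h,l,S)$ consists of those $T\in\mpt$ with $\mu(T^aF_l\cap T^bF_l)<\tfrac{1}{mh^3}$ for all $0\le a<b\le h-1$, with $\mu\big(\bigcup_{k=0}^{h-1}T^kF_l\big)>1-\tfrac1m$, and with $\mu\big(A_j\triangle\bigcup_{k\in S}T^kF_l\big)<\tfrac1m$. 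The key point is that $W(m,h,l,S)$ is open in the weak topology: since $\chi_{T^kF_l}=\chi_{F_l}\circ T^{-k}$, Lemma~\ref{lem:weakapprox}(5) gives that $T\mapsto\chi_{T^kF_l}$ is continuous from $\mpt$ into $L^2_\mu$, whence $T\mapsto\langle\chi_{T^aF_l},\chi_{T^bF_l}\rangle$, $T\mapsto\mu\big(\bigcup_kT^kF_l\big)$ and $T\mapsto\mu\big(A_j\triangle\bigcup_{k\in S}T^kF_l\big)$ are continuous ($L^2_\mu$-convergence of the indicator pieces propagates to $L^1_\mu$-convergence of their finite unions and symmetric differences). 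A finite system of strict inequalities among continuous functionals defines an open set, so $W(m,h,l,S)$ is open, $R(m,j)$ is a countable union of open sets, and $\bigcap_{m,j\in\N}R(m,j)$ is \Gd{}. It then remains to identify this intersection with the set of rank one systems.

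For the inclusion ``rank one $\subseteq\bigcap R(m,j)$'': given $m,j$, I would apply Definition~\ref{def:rankone} to $A_j$ with a small tolerance $\ep$ to produce a genuine Rokhlin tower with base $F^\ast$ and some height $h$ satisfying $\mu(\bigcup_kT^kF^\ast)>1-\ep$ and $\mu(A_j\triangle\bigcup_{k\in S}T^kF^\ast)<2\ep$ for suitable $S$; with $h$ now fixed, pick $F_l$ with $\mu(F^\ast\triangle F_l)<\tfrac{1}{2mh^3}$ and, using $\mu(T^kF^\ast\triangle T^kF_l)=\mu(F^\ast\triangle F_l)$, take $\ep$ small enough that the three defining inequalities of $W(m,h,l,S)$ hold, so $T\in R(m,j)$. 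For the reverse inclusion: given a measurable $A$ and $\ep>0$, choose $j$ with $\mu(A\triangle A_j)<\ep/4$ and $m>2/\ep$, extract $h,l,S$ witnessing $T\in R(m,j)$, and convert the almost-disjoint tower into an honest one by shrinking its base to $F':=F_l\setminus\bigcup_{0\le a<b\le h-1}T^{-a}\big(T^aF_l\cap T^bF_l\big)$; then $T^0F',\dots,T^{h-1}F'$ are genuinely disjoint, $\mu(F_l\setminus F')\le\binom{h}{2}\tfrac{1}{mh^3}<\tfrac{1}{2mh}$, so the covering and approximation errors worsen by at most $h\cdot\tfrac{1}{2mh}=\tfrac{1}{2m}$, and the tower $\{F',TF',\dots,T^{h-1}F'\}$ together with the union of levels $\bigcup_{k\in S}T^kF'$ witnesses the rank one property for $A$ with tolerance $\ep$. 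Since $A$ and $\ep$ were arbitrary, $T$ is rank one, completing the identification and the proof.

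I expect the delicate point to be precisely this interface between ``approximate'' and ``genuine'' Rokhlin towers: exact disjointness of the levels is a closed, not an open, condition, so openness of the $W(m,h,l,S)$ forces one to relax it, and the base-shrinking clean-up then loses a factor of order $h^2$ (the number of level pairs), which is why the overlap threshold must be taken proportional to $h^{-3}$ rather than to a constant. One must also be careful that the height $h$, being existentially quantified inside $R(m,j)$, cannot be used to pre-select tolerances but must be absorbed into the union. The continuity statements underpinning openness are routine consequences of Lemma~\ref{lem:weakapprox}, but the passage from $L^2_\mu$-convergence of the indicator functions $\chi_{T^kF_l}$ to $L^1_\mu$-convergence of their finite unions and symmetric differences deserves to be spelled out.
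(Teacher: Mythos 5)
Your proof is correct, and its skeleton is the same Baire-category argument as the paper's: write the rank one systems as a countable intersection, over a countable dense family of sets and accuracies, of weakly open collections of transformations admitting approximating towers, then supply density separately. The two genuine differences are these. First, for density the paper works inside each open layer: it approximates by cyclic permutations of dyadic intervals via Halmos's Weak Approximation Theorem \cite[p.~65]{Hal17}, which is more elementary and self-contained (it needs no input about which systems are actually rank one), whereas you obtain density of the rank one systems themselves by conjugating an irrational rotation (Conjugacy Lemma, Lemma~\ref{lem:conjugacy}) and proving conjugacy invariance of rank one --- a clean reuse of machinery already present in the paper (this is the same pattern as Lemma~\ref{lem:cyclic_match} and Theorem~\ref{thm:DensePrevalent}), at the price of invoking the known fact that ergodic rotations are rank one. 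Second, and more substantively, the paper defines its sets $\mathcal{A}_n$ via exact Rokhlin towers and merely asserts ``one can also verify that each $\mathcal{A}_n$ is open''; you correctly identify that exact disjointness of levels is a closed condition, so openness must be secured either by relaxing to almost-disjoint towers in the definition (your choice, with overlap threshold of order $h^{-3}$ and the base-shrinking clean-up $F'=F_l\setminus\bigcup_{a<b}T^{-a}(T^aF_l\cap T^bF_l)$ at the identification step) or by the analogous perturbation argument hidden behind the paper's one-line claim. Your continuity bookkeeping via Lemma~\ref{lem:weakapprox}(5), the $\binom{h}{2}\cdot\frac{1}{mh^3}<\frac{1}{2mh}$ loss estimate, and the two inclusions all check out, so your version in effect supplies the detail the paper leaves to the reader, while the paper's density step is the lighter of the two.
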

	
	\begin{proof}
		Let $\{E_j\}_{j\in \N}$ be a dense collection of measurable subsets in $[0,1]$. We define
		$\mathcal{A}_n$ as the set of transformations $T\in \mpt$ that admit a tower approximating $E_1,\ldots , E_n$ with accuracy $1/n$. We claim that $\mathcal{R}\coloneqq \bigcap_{n\in \N}\mathcal{A}_n$ is a dense {\Gd} set and coincides with the collection of rank one systems.
		
		To see that every $T\in \mathcal{R}$ has rank one, we let $A$ be a measurable set and $\varepsilon>0$. Since $\{E_j\}_{j\in \N}$ is dense, there is $E_k$ with $\mu(A \triangle E_k)< \varepsilon/2$. Let $n>\max(k,2/\varepsilon)$. Since $T \in \mathcal{A}_n$, there is a tower approximating $E_k$ with accuracy $1/n<\varepsilon/2$. Hence, the tower approximates $A$ with accuracy $\varepsilon$. 
		
		In the next step we show that each $\mathcal{A}_n$ is dense. For this purpose, we choose $K\in \N$ sufficiently large such that for all $k\geq K$ each $E_1,\ldots , E_n$ can be approximated by unions of dyadic intervals 
		\[
		I_{k,i}\coloneqq \Bigg[\frac{i}{2^k},\frac{i+1}{2^k}\Bigg), \ i=0,1,\ldots , 2^k-1,
		\]
		with accuracy $1/n$. On the partition $\Meng{I_{k,i}}{i=0,1,\ldots , 2^k-1}$ we consider cyclic permutations of order $k$, that is, maps cyclically permuting the dyadic intervals of rank $k$ by translations. We denote the collection of cyclic permutations of order $k$ by $\Pi_k$. In particular, each such permutation admits a tower of exactly the dyadic intervals of rank $k$. Hence, $\Pi_k \subset \mathcal{A}_n$. 
		By the so-called \emph{Weak Approximation Theorem} from \cite[p.~65]{Hal17} the collection $\bigcup_k \Pi_k$ of cyclic permutations is dense in $\mpt$ with respect to the weak topology. Since all but finitely many of the cyclic permutations are in $\mathcal{A}_n$, $\mathcal{A}_n$ still contains a dense set.
		
		One can also verify that each $\mathcal{A}_n$ is open. Then the Baire category theorem yields that $\mathcal{R}$ is a dense {\Gd} set.
	\end{proof}

	\subsection{A generic measure-preserving homeomorphism has a dense {\Gd} set of cyclic vectors} \label{subsec:homeo}
	
	In this section we want to extend our results to the setting of measure-preserving homeomorphisms. Let $\set{M}$ be a compact connected manifold and $\mu$ be a so-called \emph{OU measure}\footnote{The name OU measure was coined in \cite{Alpern} in honor of Oxtoby and Ulam. Sometimes these measures are also called \emph{good measures}.}, that is, $\mu$ is nonatomic, locally positive (i.e., it is positive on every nonempty open set), and zero on the manifold boundary. We consider the collection $\homeo(\set{M},\mu)$ of $\mu$-preserving homeomorphisms of $\set{M}$ with the uniform topology defined by the complete metric 
	\[
	\mathrm{d}_{\text{unif}}(f,g)=\norm{f-g}_{\text{unif}}=\max_{x\in \set{M}}\,\mathrm{d}\left(f(x),g(x)\right)+\max_{x\in \set{M}}\,\mathrm{d}\left(f^{-1}(x),g^{-1}(x)\right),
	\]
	where $\mathrm{d}$ is a given metric on $\set{M}$ compatible with its topology.\footnote{\revision{Note that Urysohn's metrization theorem implies that every manifold is metrizable. On compact spaces, all such metrics inducing the topology are uniformly equivalent. Hence, they also induce the same uniform topology on $\homeo(\set{M},\mu)$.}} 
	
	In this setting, any measure-theoretic property which is generic for abstract measure-preserving transformations is also generic for measure-preserving homeomorphisms of compact manifolds. This can be stated as follows.
	
	\begin{lemma}[{\cite[Corollary 10.4]{Alpern}}] \label{lem:HomeoGd}
		Let $\mu$ be an OU measure on a compact connected manifold $\set{M}$. Let $\mathcal{V}$ be a conjugate invariant dense {\Gd} subset of $\mpt$ with
		the weak topology. Then $\mathcal{V}\cap \homeo(\set{M},\mu)$ is a dense {\Gd} subset of $\homeo(\set{M},\mu)$ with the uniform topology.
	\end{lemma}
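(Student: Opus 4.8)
The final statement coincides with \cite[Corollary~10.4]{Alpern}, which in the paper one simply invokes; the plan below indicates how the argument is reconstructed. It splits into two halves: showing that $\mathcal V\cap\homeo(\set M,\mu)$ is \Gd{} for the uniform topology (the soft half) and that it is dense there (the real content).

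For the \Gd{} half, the plan is to prove that the inclusion $\iota\colon(\homeo(\set M,\mu),\mathrm{d}_{\text{unif}})\hookrightarrow(\mpt,\text{weak})$ is continuous; then $\mathcal V\cap\homeo(\set M,\mu)=\iota^{-1}(\mathcal V)$ is \Gd{} as the preimage of a \Gd{} set under a continuous map. For continuity, let $g_n\to g$ uniformly and put $h_n:=g_ng^{-1}$, so that $\mathrm{d}_{\text{unif}}(h_n,\mathrm{id})\to0$ and $g_nE\triangle gE=h_n(gE)\triangle gE$; it thus suffices to show $\mu(h_nE\triangle E)\to0$ for $h_n\to\mathrm{id}$ uniformly and every measurable $E$. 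Given $\eta>0$, choose by regularity of $\mu$ a closed $F\subseteq E$ with $\mu(E\setminus F)<\eta$; then $h_nE\triangle E\subseteq h_n(E\setminus F)\cup(h_nF\triangle F)\cup(E\setminus F)$, and since $h_n$ is measure preserving $\mu(h_nF\triangle F)=2\mu(h_nF\setminus F)\le 2\mu\big(B_{\delta_n}(F)\setminus F\big)$, where $B_\delta(F)$ denotes the $\delta$-neighbourhood and $\delta_n:=\mathrm{d}_{\text{unif}}(h_n,\mathrm{id})\to0$. As $F$ is closed, $\mu(B_\delta(F)\setminus F)\downarrow0$ as $\delta\downarrow0$, so $\limsup_n\mu(h_nE\triangle E)\le2\eta$; letting $\eta\to0$ proves continuity.

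For density, the plan is to exploit the conjugate-invariance of $\mathcal V$ together with two inputs from the theory of volume-preserving homeomorphisms, both contained in \cite{Alpern}: (i) a \emph{realization} statement, namely that every weakly mixing $\mu$-preserving automorphism is measure-theoretically isomorphic to some $g\in\homeo(\set M,\mu)$ — this is where compactness and connectedness of $\set M$ enter, via the Oxtoby--Ulam theorem (reducing any OU measure to Lebesgue measure on a cube) and Alpern's homeomorphic version of Lax's combinatorial approximation theorem, the uniform-topology counterpart of the Weak Approximation Theorem used in the proof of Proposition~\ref{prop:rank1Gd}; and (ii) the homeomorphic analogue of the Conjugacy Lemma (Lemma~\ref{lem:conjugacy}): for aperiodic $g_0\in\homeo(\set M,\mu)$, the set $\{S^{-1}g_0S:S\in\homeo(\set M,\mu)\}$ is dense in $\homeo(\set M,\mu)$ for the uniform topology.

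Granting (i) and (ii), the density argument runs as follows. Since $(\mpt,\text{weak})$ is completely metrizable and both $\mathcal V$ and the set of weakly mixing automorphisms are dense \Gd{} subsets of it (genericity of weak mixing being classical, see e.g.\ \cite{Hal17}), their intersection is nonempty; fix a weakly mixing $T_0\in\mathcal V$, which is in particular aperiodic since $\mu$ is nonatomic. By (i) there is $g_0\in\homeo(\set M,\mu)$ with $g_0\cong T_0$; then $g_0$ is aperiodic, and since $\mathcal V$ is conjugate invariant and $T_0\in\mathcal V$, every $S^{-1}g_0S$ with $S\in\mpt$ — a fortiori with $S\in\homeo(\set M,\mu)$ — lies in $\mathcal V$. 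By (ii) these conjugates are uniformly dense in $\homeo(\set M,\mu)$, so $\mathcal V\cap\homeo(\set M,\mu)$ is uniformly dense, completing the argument. The main obstacle is precisely (i) and (ii): the realization theorem and the homeomorphic Conjugacy Lemma are the nontrivial geometric inputs, distilled in \cite{Alpern} (building on Oxtoby--Ulam and on the homeomorphic form of Lax's theorem); the rest — continuity of $\iota$ and the Baire-category bookkeeping in $\mpt$ — is routine.
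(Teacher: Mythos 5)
The \Gd{} half of your plan is sound: the continuity of the inclusion $(\homeo(\set{M},\mu),\mathrm{d}_{\text{unif}})\hookrightarrow(\mpt,\text{weak})$ via inner regularity and the neighbourhood estimate for a closed $F\subseteq E$ is correct, and taking preimages of \Gd{} sets is exactly how the soft half goes. (Note the paper itself offers no proof here; it quotes \cite[Corollary 10.4]{Alpern}, so the comparison is with that source's argument.) The problem is your density half, specifically input (ii).

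Your ``homeomorphic Conjugacy Lemma'' --- that for an aperiodic $g_0\in\homeo(\set{M},\mu)$ the set $\{S^{-1}g_0S : S\in\homeo(\set{M},\mu)\}$ is dense in the uniform topology --- is false in general and is not a result of \cite{Alpern}. Conjugation by a homeomorphism is a topological conjugacy, so this class consists of maps sharing every topological invariant of $g_0$, and no single topological conjugacy class can be uniformly dense. Concretely, measure-theoretic aperiodicity does not rule out fixed points; if $g_0(q)=q$, then every conjugate fixes $S^{-1}(q)$, and for a fixed-point-free rotation $R$ of the torus with $c:=\min_x \mathrm{d}(x,Rx)>0$ one gets $\mathrm{d}_{\text{unif}}(S^{-1}g_0S,R)\ge \mathrm{d}(p,Rp)\ge c$ at the fixed point $p$, so the class misses a uniform ball around $R$. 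One cannot repair this by choosing the realization $g_0$ carefully: there is no theorem asserting uniform density of a single topological conjugacy class, and the whole point of the machinery in \cite{Alpern} is to avoid needing one. The actual key input behind Corollary 10.4 is that for every antiperiodic automorphism $T$ the set of \emph{all} $h\in\homeo(\set{M},\mu)$ that are measure-theoretically isomorphic to $T$ --- isomorphic via an arbitrary automorphism, not conjugate via a homeomorphism --- is dense in $\homeo(\set{M},\mu)$ for the uniform topology (this is where Oxtoby--Ulam and the homeomorphic Lax/tower approximation enter, and it requires $\dim\set{M}\ge 2$; on $S^1$ the statement fails, since $\homeo(S^1,\mathrm{Leb})$ consists only of rotations and reflections and $\mathcal{V}=$ weakly mixing maps would give empty intersection). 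With that theorem your bookkeeping works verbatim and even simplifies: pick an aperiodic $T_0\in\mathcal{V}$, observe by conjugate invariance that every homeomorphism isomorphic to $T_0$ lies in $\mathcal{V}$, and conclude uniform density; your separate realization step (i) is then subsumed rather than a standalone ingredient.
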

	
	Since the collection of mpts with a dense {\Gd} set of cyclic vectors is conjugate invariant by Lemma~\ref{lem:cyclic_match}, we can apply Lemma~\ref{lem:HomeoGd} on Proposition~\ref{prop:DenseGd}. This gives the following genericity result for homeomorphisms with cyclic vectors.
	
	\begin{proposition}
		Let $\set{M}$ be a compact connected manifold and $\mu$ be an OU measure. The collection of $\mu$-preserving homeomorphisms on $\set{M}$ with a dense {\Gd} set of cyclic vectors in $L^2_\mu$ is a dense {\Gd} set in $\homeo(\set{M},\mu)$ with the uniform topology.
	\end{proposition}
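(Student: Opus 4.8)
The plan is to combine Proposition~\ref{prop:DenseGd} with the transfer principle of Lemma~\ref{lem:HomeoGd}. Write $\mathcal{V}\subset\mpt$ for the collection of $\mu$-preserving transformations of $(\set{M},\mu)$ that admit a dense {\Gd} set of cyclic vectors in $L^2_\mu$. Since $\mu$ is an OU measure on a compact connected manifold, the space $(\set{M},\mathfrak{B},\mu)$ is a standard measure space in the sense of section~\ref{subsec:basicsMT}, so Proposition~\ref{prop:DenseGd} applies directly and shows that $\mathcal{V}$ is a dense {\Gd} subset of $\mpt$ with respect to the weak topology.

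The one point that needs to be checked before invoking Lemma~\ref{lem:HomeoGd} is that $\mathcal{V}$ is conjugate invariant. Here I would argue as follows: let $T\in\mathcal{V}$ with associated dense {\Gd} set of cyclic vectors $G_T\subset L^2_\mu$, and let $S$ be any $\mu$-preserving essential bijection. By Lemma~\ref{lem:cyclic_match}, $c\circ S$ is a cyclic vector of $S^{-1}TS$ for every $c\in G_T$, so $\cU_S G_T$ is contained in the set of cyclic vectors of $S^{-1}TS$. Because $S$ is a $\mu$-preserving essential bijection, $\cU_S:L^2_\mu\to L^2_\mu$ is unitary, hence an isometric bijection, and therefore maps the dense {\Gd} set $G_T$ onto a set $\cU_S G_T$ that is again dense and {\Gd} (cf.\ the remarks following Definition~\ref{def:Gd} on unitary transformations preserving these properties). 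Thus $S^{-1}TS$ has a dense {\Gd} set of cyclic vectors, i.e.\ $S^{-1}TS\in\mathcal{V}$, establishing conjugate invariance.

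With this in hand, Lemma~\ref{lem:HomeoGd} applies: since $\mu$ is an OU measure on the compact connected manifold $\set{M}$ and $\mathcal{V}$ is a conjugate invariant dense {\Gd} subset of $\mpt$, the intersection $\mathcal{V}\cap\homeo(\set{M},\mu)$ is a dense {\Gd} subset of $\homeo(\set{M},\mu)$ in the uniform topology. It then only remains to identify this intersection with the set named in the statement: the cyclic vectors of a homeomorphism $T\in\homeo(\set{M},\mu)$ depend solely on the associated Koopman operator $\cU_T$ on $L^2_\mu$, hence only on the equivalence class of $T$ as a measure-preserving transformation of $(\set{M},\mu)$, so $\mathcal{V}\cap\homeo(\set{M},\mu)$ is precisely the collection of $\mu$-preserving homeomorphisms of $\set{M}$ admitting a dense {\Gd} set of cyclic vectors in $L^2_\mu$. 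I do not expect a genuine obstacle here; the only care required is to propagate the \emph{entire} {\Gd} set through $\cU_S$ (rather than a single cyclic vector, which is all Lemma~\ref{lem:cyclic_match} literally states) and to verify that the hypotheses of Lemma~\ref{lem:HomeoGd} are met, both of which are routine.
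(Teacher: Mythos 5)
Your proposal is correct and follows essentially the same route as the paper: conjugate invariance of the class of mpts with a dense \Gd{} set of cyclic vectors via Lemma~\ref{lem:cyclic_match} (together with the fact that the unitary $\cU_S$ preserves dense \Gd{} sets), combined with Proposition~\ref{prop:DenseGd} and the transfer result Lemma~\ref{lem:HomeoGd}. Your explicit check that the \emph{whole} \Gd{} set propagates under $\cU_S$ is exactly the point the paper handles through its remark that unitary transformations map dense \Gd{} sets to dense \Gd{} sets.
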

	
	Thus, a generic homeomorphism admits a generic set of predictive $L^2$ observables.
	Note that Takens-type theorems require observables to be continuous.
	
	\section{Properties of the least squares filter}
	\label{sec:filter_properties}
	
	\subsection{Forecast properties}
	
	Beyond the prediction error considered in Proposition~\ref{prop:forecasterror}, a further dynamic object of interest is the autocorrelation function; that is
	\[
	A(n) := \langle \cU^n f, f \rangle_{L^2_\mu} = \int f\circ T^n\, f\, d\mu.
	\]
	It turns out that autocorrelations for $n\le d$ are always reproduced correctly, irrespective of whether the observation function is a cyclic vector of $\cU$ or not.
	
	\revision{
		To show this, let us define the shorthand notations $\Pi_{\mathcal{V}}$ (or $\Pi_h$) for the orthogonal projection in $L^2_{\mu}$ to a subspace $\mathcal{V} \subset L^2_{\mu}$ (or to the onedimensional subspace spanned by $h\in L^2_\mu$) and $\mathcal{K}_d^- := \mathrm{span}\{ f\circ T^{-d+1},\ldots, f\circ T^{-1}\}$. Note, in particular, that~$\mathcal{K}_d^- = \cU^{-1} \mathcal{K}_{d-1}$ and~$\mathcal{K}_d = \mathcal{K}_d^- + \mathrm{span}\{f\}$. Let $\Pi_{\mathcal{V}}^{\perp}:= \mathrm{id} - \Pi_{\mathcal{V}}$ denote the orthogonal projection on the orthogonal complement of~$\mathcal{V}$. }
	\revision{
		\begin{proposition}
			\label{prop:autocorr}
			Let $A_d(n) := \langle \cU_d^n f, f \rangle_{L^2_\mu}$ denote the filter's autocorrelation. We have for every map $T$ and observable $f$ that
			\[
			|A(n) - A_d(n)| \le \left\{
			\renewcommand{\arraystretch}{1.5}
			\begin{array}{ll}
				0, & n \le d, \\
				\| \cU_d f - \cU f \|_{L^2_\mu} \| f \|_{L^2_\mu}, & n = d+1,\\
				(n-d)\frac{\big\| \Pi_{\mathcal{K}_d}^{\perp} \cU f \big\|_{L^2_\mu}}{ \big\| \Pi_{\mathcal{K}_d^-}^{\perp} f \big\|_{L^2_\mu}} \, \| f \|_{L^2_\mu}^2, & n \ge d+2.
			\end{array}
			\right.
			\]
		\end{proposition}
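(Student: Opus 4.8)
The plan is to relate the non-unitary powers $\cU_d^n f$ to honest Koopman powers by exploiting that $\cU$ and $\cU_d$ coincide on $\cK_d^-$. Recall from Proposition~\ref{prop:L2projU} that $\cU_d = \Pi_{\cK_d}\cU\big|_{\cK_d}$, a contraction on $\cK_d$, and observe the structural fact $\cU \cK_d^- = \cK_{d-1}\subseteq \cK_d$, so that $\Pi_{\cK_d}^\perp\cU g = 0$, i.e.\ $\cU g = \cU_d g$, for every $g\in\cK_d^-$. First I would prove the following ``peeling'' identity: for $g\in\cK_d$ and $0\le k\le d-1$,
\[
\langle \cU_d g, \cU^{-k}f\rangle = \langle \cU g, \Pi_{\cK_d}\cU^{-k}f\rangle = \langle \cU g, \cU^{-k}f\rangle = \langle g, \cU^{-k-1}f\rangle,
\]
where the middle equality holds because $\cU^{-k}f = f\circ T^{-k}\in\cK_d$ and the last one by unitarity of $\cU$. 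Applying this to $R_k := \langle \cU_d^{\,n-k}f, \cU^{-k}f\rangle$ with $g = \cU_d^{\,n-k-1}f$ (legitimate as long as $k\le n-1$ and $k\le d-1$) yields $R_k = R_{k+1}$ for all $0\le k\le \min(n-1,d-1)$.

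For $n\le d$ this already settles the first case: $A_d(n) = R_0 = \dots = R_n = \langle f,\cU^{-n}f\rangle = \langle \cU^n f, f\rangle = A(n)$, so the difference vanishes. For $n\ge d+1$ the same chain gives $A_d(n) = R_0 = \dots = R_d = \langle \cU_d^{\,n-d}f,\cU^{-d}f\rangle$, while unitarity of $\cU$ gives $A(n) = \langle \cU^{n-d}f,\cU^{-d}f\rangle$; subtracting and using Cauchy--Schwarz together with $\|\cU^{-d}f\| = \|f\|$,
\[
|A(n) - A_d(n)| = |\langle (\cU^{n-d} - \cU_d^{\,n-d})f,\ \cU^{-d}f\rangle| \le \|(\cU^{n-d} - \cU_d^{\,n-d})f\|\,\|f\|.
\]
It then remains to estimate $\|(\cU^m - \cU_d^{\,m})f\|$ for $m := n-d\ge 1$. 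Since $\cU h - \cU_d h = \Pi_{\cK_d}^\perp\cU h$ for $h\in\cK_d$, a telescoping expansion gives
\[
\cU^m f - \cU_d^{\,m}f = \sum_{j=0}^{m-1}\cU^{\,m-1-j}\,\Pi_{\cK_d}^\perp\cU\big(\cU_d^{\,j}f\big),
\]
hence, using unitarity of $\cU$ and the triangle inequality, $\|(\cU^m - \cU_d^{\,m})f\| \le \sum_{j=0}^{m-1}\|\Pi_{\cK_d}^\perp\cU \cU_d^{\,j}f\|$. For $m=1$ the sum is the single term $\|\Pi_{\cK_d}^\perp\cU f\| = \|\cU_d f - \cU f\|$, which (times $\|f\|$) is the $n=d+1$ bound.

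For $m\ge 2$ the crucial estimate is that, for every $g\in\cK_d$,
\[
\|\Pi_{\cK_d}^\perp\cU g\| \le \|g\|\,\frac{\|\Pi_{\cK_d}^\perp\cU f\|}{\|\Pi_{\cK_d^-}^\perp f\|}.
\]
To prove it, assume $\Pi_{\cK_d^-}^\perp f\neq 0$ (otherwise the right-hand side is $+\infty$ and nothing is to show), so that $\cK_d = \cK_d^-\oplus\C e$ with $e := \Pi_{\cK_d^-}^\perp f/\|\Pi_{\cK_d^-}^\perp f\|$ spanning the one-dimensional orthogonal complement. Writing $g = g_1 + \beta e$ with $g_1\in\cK_d^-$ and $|\beta|\le\|g\|$, the structural fact gives $\Pi_{\cK_d}^\perp\cU g = \beta\,\Pi_{\cK_d}^\perp\cU e$; and because $\cU(\Pi_{\cK_d^-}f)\in\cK_{d-1}\subseteq\cK_d$ we have $\Pi_{\cK_d}^\perp\cU e = \Pi_{\cK_d}^\perp\cU f/\|\Pi_{\cK_d^-}^\perp f\|$, which yields the estimate. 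Combining it with the contractivity bound $\|\cU_d^{\,j}f\|\le\|f\|$ gives $\|(\cU^m - \cU_d^{\,m})f\| \le m\,\|f\|\,\|\Pi_{\cK_d}^\perp\cU f\|/\|\Pi_{\cK_d^-}^\perp f\|$, and one more factor $\|f\|$ from the Cauchy--Schwarz step above, with $m = n-d$, produces the $n\ge d+2$ bound.

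I expect the main obstacle to be identifying the crucial estimate in the last paragraph, namely the realization that $\Pi_{\cK_d}^\perp\cU$ annihilates $\cK_d^-$ and therefore factors through the one-dimensional space $\cK_d\ominus\cK_d^-$ --- this single observation simultaneously produces the numerator $\|\Pi_{\cK_d}^\perp\cU f\|$ and the denominator $\|\Pi_{\cK_d^-}^\perp f\|$ of the stated bound; the remaining ingredients (the peeling identity, the telescoping expansion, and $\|\cU_d\|\le 1$) are routine. A secondary point requiring care is the exact range $0\le k\le\min(n-1,d-1)$ in the peeling step, since it is precisely this cutoff that separates the three regimes $n\le d$, $n=d+1$, and $n\ge d+2$.
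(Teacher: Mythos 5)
Your proposal is correct and follows essentially the same route as the paper's proof: your peeling recursion $R_k=R_{k+1}$ combined with the operator telescoping of $\cU^{m}-\cU_d^{m}$ reproduces exactly the paper's telescopic decomposition of $A_d(n)-A(n)$ (with the terms $i=0,\dots,d-1$ killed by the projection/variational property and the remaining $n-d$ terms estimated), and your key bound $\|\Pi_{\cK_d}^{\perp}\cU g\|\le\|g\|\,\|\Pi_{\cK_d}^{\perp}\cU f\|/\|\Pi_{\cK_d^-}^{\perp}f\|$ via the orthogonal splitting $\cK_d=\cK_d^-\oplus\C e$ is precisely the paper's computation of $\|(\cU_d-\cU)\vert_{\cK_d}\|$, resting on the same observation that $\cU$ and $\cU_d$ agree on~$\cK_d^-$. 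The only differences are organizational (recursion plus remainder instead of one full telescopic sum), plus your slightly more careful handling of the degenerate case $\Pi_{\cK_d^-}^{\perp}f=0$.
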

	}
	\begin{proof}
		\revision{
			We can express the statement of Proposition~\ref{prop:L2projU} as
			\begin{equation}
				\label{eq:vareq2}
				\langle \cU_d g - \cU g, f\circ T^{-i} \rangle_{L^2_\mu} = 0\qquad \forall g \in \cK_d,\ i=0,\ldots,d-1.
			\end{equation}
			For $n=1$, the claim $A(n) = A_d(n)$ translates to
			\[
			\langle \cU_d f - \cU f, f\rangle_{L^2_\mu} = 0.
			\]
			Note that \eqref{eq:vareq2} implies this with $g=f$ and $i=0$. By expressing the difference $\cU_d^n - \cU^n$ as a telescopic sum, we can extend this result to higher powers~$n$:
			\begin{align}
				A_d(n) - A(n) &= \sum_{i=0}^{n-1} \left\langle \cU^i \cU_d^{n-i} f - 
				\cU^{i+1} \cU_d^{n-i-1}f, f \right\rangle_{L^2_\mu} \nonumber \\
				&= \sum_{i=0}^{n-1} \langle \cU_d^{n-i} f - 
				\cU \cU_d^{n-1-i}f, f\circ T^{-i}\rangle_{L^2_\mu} \nonumber \\
				&= \sum_{i=0}^{n-1} \langle (\cU_d - \cU) 
				\cU_d^{n-1-i}f, f\circ T^{-i}\rangle_{L^2_\mu} \nonumber \\
				&= \sum_{i=0}^{d-1} \langle (\cU_d - \cU) 
				\cU_d^{n-1-i}f, f\circ T^{-i}\rangle_{L^2_\mu} \nonumber \\
				&{} \qquad +\sum_{i=d}^{n-1} \left\langle \cU^i (\cU_d - 
				\cU ) \cU_d^{n-1-i}f, f \right\rangle_{L^2_\mu}, \label{eq:autocorr_telescope}
			\end{align}
			where we used $\smash{ \langle \cU^i h, f\rangle_{L^2_\mu} = \langle h, f\circ T^{-i} \rangle_{L^2_\mu} }$ for any~$h \in L^2_\mu$, implied by the invariance of~$\mu$.
			Since $i\le n-1$, note that $g:= \cU_d^{n-i-1}f \in \cK_d$ and \eqref{eq:vareq2} implies that for $i=0,\ldots,d-1$,
			\[
			\langle (\cU_d - \cU) 
			\cU_d^{n-i-1}f, f\circ T^{-i}\rangle_{L^2_\mu} = 0.
			\]
			The first term on the right-hand side of \eqref{eq:autocorr_telescope} is hence zero.
			Thus, $A_d(n) = A(n)$ for~$n\le d$, as the second sum on the right-hand side of \eqref{eq:autocorr_telescope} is empty.}
		
		\revision{
			For $n=d+1$ the second sum on the right-hand side of \eqref{eq:autocorr_telescope} reduces to
			\[
			\left\langle \cU^d (\cU_d - \cU ) \cU_d^{0}f, f \right\rangle_{L^2_\mu}.
			\]
			The inequality for $n=d+1$ thus follows with~$\|\cU\|=1$.
			Note that since $\cU$ is unitary and $\cU_d$ is the orthogonal projection of $\cU$, by the non-ex\-pan\-sive\-ness of orthogonal projections we have that $\|\cU_d\| \le 1$.}
		
		\revision{
			For $n\ge d+2$ we can estimate the terms in the second sum on the right-hand side of \eqref{eq:autocorr_telescope} by
			\begin{equation}
				\label{eq:autocorr_bound}
				\left| \left\langle \cU^i (\cU_d - 
				\cU ) \cU_d^{n-1-i}f, f \right\rangle_{L^2_\mu} \right| \le \| \cU \|^i \left\| (\cU_d - \cU)\vert_{\cK_d} \right\| \|\cU_d\|^{n-1-i} \|f\|_{L^2_\mu}^2.
			\end{equation}
			It remains to bound~$\left\| (\cU_d - \cU)\vert_{\cK_d} \right\|$. We note that for every $f^- \in \cK_d^-$ we have that $\cU_d f^- = \cU f^-$ because $\cU f^- \in \cK_d$. Thus, we can write
			\begin{equation}
				\label{eq:UdMinusU_bound}
				\left\| (\cU_d - \cU)\vert_{\cK_d} \right\| = \sup_{\substack{g \in \cK_d\\ g\neq 0}} \frac{\| \cU_d g - \cU g \|_{L^2_\mu}}{\|g\|_{L^2_\mu}} = \sup_{\substack{g = f + f^-\\ f^- \in \cK_d^-}} \frac{\| \cU_d g - \cU g \|_{L^2_\mu}}{\|g\|_{L^2_\mu}},
			\end{equation}
			where the second equation holds because for any $g$ without a contribution from $f$ (i.e., $g\in\cK_d^-$) the fraction on the right is zero, clearly dominated by the supremum as written. We obtain
			\[
			\left\| (\cU_d - \cU)\vert_{\cK_d} \right\| = \sup_{f^- \in \cK_d^-} \frac{\| \cU_d f - \cU f \|_{L^2_\mu}}{\|f+f^-\|_{L^2_\mu}} = \sup_{f^- \in \cK_d^-} \frac{\| \Pi^{\perp}_{\cK_d} \cU f \|_{L^2_\mu}}{\|f+f^-\|_{L^2_\mu}} = \frac{\| \Pi^{\perp}_{\cK_d} \cU f \|_{L^2_\mu}}{\| \Pi^{\perp}_{\cK_d^-} f \|_{L^2_\mu}}.
			\]
			The claim follows by applying the bound \eqref{eq:autocorr_bound} to every term in the sum.}
	\end{proof}
	\revision{Thus, if $f$ is predictive (in particular, if it is a cyclic vector of $\cU$), then even
		\[
		\lim_{d\to\infty} A_d(d+1) = A(d+1).
		\]}
	By the Wiener--Khinchin theorem, autocorrelation is related to the notion of power spectrum through Fourier transformation~\cite{MeGo07}.
	Since Proposition~\ref{prop:autocorr} does not indicate anything about autocorrelation beyond a finite horizon, statements about the reconstruction of the power spectrum seem to be elusive.
	Methods more powerful than the least squares filter can approximate the power spectrum, e.g.~\cite[\S4.5]{colbrook2023residual}.
	\revision{In the next section we consider spectral approximation of~$\cU$. We note that the rightmost term in~\eqref{eq:UdMinusU_bound} will reappear in the pseudospectral bound given there.}
	
	\subsection{Some spectral properties}
	
	We start with the following trivial observation:
	\begin{proposition}
		\label{prop:projspectrum}
		The spectrum of $U_d$ is contained in the unit complex disk.
	\end{proposition}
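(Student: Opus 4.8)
The plan is to exploit that $\cU_d$ is a compression of the unitary operator $\cU$ and is therefore a Hilbert-space contraction, whence its spectrum --- which does not depend on the choice of basis --- lies in the closed unit disk.

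First I would recall from Proposition~\ref{prop:L2projU} that $\cU_d:\cK_d\to\cK_d$ is the $L^2_\mu$-orthogonal projection of $\cU$ onto $\cK_d$, that is, $\cU_d = \Pi_{\cK_d}\,\cU\big\vert_{\cK_d}$. Since $T$ is essentially invertible, $\cU$ is unitary, so $\|\cU g\|_{L^2_\mu} = \|g\|_{L^2_\mu}$ for every $g$, and orthogonal projections are non-expansive; hence $\|\cU_d g\|_{L^2_\mu} \le \|\cU g\|_{L^2_\mu} = \|g\|_{L^2_\mu}$ for all $g\in\cK_d$. Thus $\cU_d$ is a contraction on the finite-dimensional Hilbert space $(\cK_d,\langle\cdot,\cdot\rangle_{L^2_\mu})$, so its spectral radius is at most $\|\cU_d\|\le 1$; in other words, $\mathrm{spec}(\cU_d)\subset\{z\in\C:|z|\le 1\}$. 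Finally, $U_d$ is a matrix representation of $\cU_d$ with respect to the basis $\mathcal{B}_d$ (assuming, as elsewhere, that $\mathcal{B}_d$ is linearly independent, hence a basis of $\cK_d$), and matrix representations in different bases are similar, so $\mathrm{spec}(U_d) = \mathrm{spec}(\cU_d)\subset\{z\in\C:|z|\le 1\}$. As an alternative to the norm argument, one may observe that for $g\in\cK_d$ one has $\langle\cU_d g,g\rangle_{L^2_\mu} = \langle\cU g,g\rangle_{L^2_\mu}$, so the numerical range of $\cU_d$ is contained in that of $\cU$, hence in the closed convex hull of $\bT$, which is again the closed unit disk; since the numerical range of an operator on a finite-dimensional space contains its spectrum, the same conclusion follows.

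There is essentially no obstacle here (the statement is labelled a trivial observation); the only point requiring a little care is that $U_d$ is the representation of $\cU_d$ in the generally non-orthonormal basis $\mathcal{B}_d$, so one must not argue via the Euclidean operator norm of the matrix $U_d$ itself --- which need not be $\le 1$ --- but via the operator norm of $\cU_d$ on $\cK_d$ equipped with the $L^2_\mu$ inner product, together with the invariance of the spectrum under the similarity transformation relating the two representations.
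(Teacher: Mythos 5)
Your argument is correct and is essentially the paper's own proof: $\cU_d$ is the compression of the unitary $\cU$ to $\cK_d$, hence a contraction by the non-expansiveness of orthogonal projections, so its spectral radius is at most~$1$, and $U_d$ inherits the spectrum of $\cU_d$ as its matrix representation. Your added remark that one must work with the $L^2_\mu$ operator norm of $\cU_d$ rather than the Euclidean norm of the (non-orthonormally represented) matrix $U_d$ is a sensible clarification, but it does not change the route.
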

	\begin{proof}
		As in the proof of Proposition~\ref{prop:autocorr}: Since $\cU$ is unitary and $\cU_d$ is the orthogonal projection of $\cU$, by the non-ex\-pan\-sive\-ness of orthogonal projections we have that $\|\cU_d\| \le 1$, implying the claim.
	\end{proof}
	
	Under additional assumptions, stronger statements about the position and the distribution of the spectrum of $U_d$ can be found in~\cite[Corollaries 2 and~3]{korda2020data}.
	
	How close is the spectrum of $\cU_d$ to that of~$\cU$? The theory of pseudospectra~\cite{TrEm05} for normal operators offers a way to approach this question. For simplicity of notation, we use $\| \cdot \| = \|\cdot\|_{L^2_\mu}$ for the remainder of this section.
	A scalar $\lambda \in \C$ is said to be in the \emph{$\ep$-pseudospectrum} of $\cU$ if there is a $h\in L^2_{\mu}$ with $\|h\|=1$ such that~$\| \cU h- \lambda h\| < \ep$. Since $\cU$ is unitary, hence normal, every value in the $\ep$-pseudospectrum is at most distance $\ep$ away from a true spectral value~\cite{TrEm05}. 
	
	One can now take an eigenpair $(\lambda_d,\phi_d)$ of $\cU_d$ with $\|\phi_d\|=1$ and estimate~$\|\cU \phi_d - \lambda_d \phi_d\|$ to assess how far is $\lambda_d$ from the spectrum of~$\cU$. \revision{Recall the notation introduced prior to Proposition~\ref{prop:autocorr}. With it, we have:}
	\begin{proposition}
		\label{prop:pseudospec}
		Let $\lambda_d$ be an eigenvalue of $\cU_d$. Then $\lambda_d$ is contained in the $\ep$-pseudo\-spectrum of~$\cU$, where
		\begin{equation}
			\label{eq:epsi_pseudo}
			\ep = \frac{\| \Pi_{\mathcal{K}_d}^{\perp} \cU f\|}{\| \Pi_{\mathcal{K}_d^-}^{\perp} f\|}.
		\end{equation}
	\end{proposition}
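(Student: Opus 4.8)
The plan is to take an eigenpair $(\lambda_d,\phi_d)$ of $\cU_d$ with $\|\phi_d\|=1$ and directly estimate the residual $\|\cU\phi_d - \lambda_d\phi_d\|$, which by normality of $\cU$ places $\lambda_d$ in the $\ep$-pseudospectrum of $\cU$ with $\ep$ equal to that residual. Since $\phi_d$ is an eigenvector of $\cU_d$, it lies in $\cK_d$, and $\cU_d\phi_d = \lambda_d\phi_d$. Hence we can rewrite the residual as
\[
\|\cU\phi_d - \lambda_d\phi_d\| = \|\cU\phi_d - \cU_d\phi_d\| = \|(\cU - \cU_d)\vert_{\cK_d}\,\phi_d\| \le \left\|(\cU_d - \cU)\vert_{\cK_d}\right\|,
\]
using $\|\phi_d\|=1$. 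The key observation is then that the operator norm $\|(\cU_d - \cU)\vert_{\cK_d}\|$ was already computed in the proof of Proposition~\ref{prop:autocorr}, specifically in equations \eqref{eq:UdMinusU_bound} and the line following it, where it was shown that
\[
\left\|(\cU_d - \cU)\vert_{\cK_d}\right\| = \frac{\|\Pi_{\cK_d}^{\perp}\cU f\|}{\|\Pi_{\cK_d^-}^{\perp} f\|}.
\]

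So the whole argument reduces to chaining these two facts. First I would recall (or re-derive in one line) that for any $f^- \in \cK_d^-$ we have $\cU f^- \in \cK_d$, hence $\cU_d f^- = \cU f^-$, so that the map $(\cU_d - \cU)\vert_{\cK_d}$ annihilates $\cK_d^-$ and depends on $g = f + f^- \in \cK_d$ only through the coefficient of $f$; this gives $(\cU_d - \cU)g = -\Pi_{\cK_d}^{\perp}\cU f$ for every such $g$. Taking the supremum of $\|(\cU_d-\cU)g\|/\|g\|$ over $g = f + f^-$ then minimizes $\|f + f^-\|$ over $f^- \in \cK_d^-$, which is exactly $\|\Pi_{\cK_d^-}^{\perp} f\|$ by definition of orthogonal projection, yielding the stated formula for $\ep$.

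I do not anticipate a genuine obstacle here — the proposition is essentially a corollary of the computation already done for the autocorrelation bound, together with the standard fact (cited from \cite{TrEm05}) that for a normal operator the $\ep$-pseudospectrum lies within distance $\ep$ of the true spectrum. The only thing to be careful about is the edge case where $\Pi_{\cK_d^-}^{\perp} f = 0$, i.e.\ $f \in \cK_d^-$; but in that situation $\cK_d = \cK_d^-$ is $\cU$-invariant, $\cU_d$ is itself unitary on $\cK_d$, and one checks the statement holds trivially (or one simply notes that under the linear-independence hypotheses implicitly in force, $f \notin \cK_d^-$, so the denominator is positive). I would state the proof in three or four lines, referencing \eqref{eq:UdMinusU_bound} and the surrounding display rather than repeating the derivation.
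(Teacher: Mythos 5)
Your proof is correct and takes essentially the same route as the paper: both arguments bound the residual $\|\cU\phi_d-\lambda_d\phi_d\|$ by decomposing elements of $\cK_d$ into a multiple of $f$ plus a component in $\cK_d^-$ on which $\cU_d$ and $\cU$ agree, which yields exactly the ratio in \eqref{eq:epsi_pseudo}. The only difference is presentational: the paper re-derives the coefficient bound directly for $\phi_d$ via a trigonometric (sine-law) estimate, whereas you pass through the operator norm $\left\|(\cU_d-\cU)\vert_{\cK_d}\right\|$ computed in \eqref{eq:UdMinusU_bound} in the proof of Proposition~\ref{prop:autocorr} (a reuse the paper itself anticipates), which is, if anything, a slightly cleaner packaging of the same estimate; your handling of the degenerate case $f\in\cK_d^-$ is also fine.
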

	\begin{proof}
		Let $(\lambda_d,\phi_d)$ be an eigenpair of $\cU_d$ with $\|\phi_d\|=1$. Note that we seek $\ep = \|\cU \phi_d - \lambda_d \phi_d\|$. Let us write $\phi_d = a f^- + b f$ with $f^- \in \mathcal{K}_d^-$ and~$a,b \in \C$. Then, since $\lambda_d\phi_d = \cU_d \phi_d$, we have that
		\[
		\ep = \| (\cU - \Pi_{\mathcal{K}_d}\cU)\phi_d \| = \| \Pi_{\mathcal{K}_d}^{\perp} \cU (b f) \|,
		\]
		because~$\Pi_{\mathcal{K}_d}^{\perp} \cU f^- = 0$. We thus need to bound $|b|$ from above, given~$\|a f^- + b f\|=1$. This can be done by basic trigonometry. Let us consider the triangle with corners $0,a f^-$, and $\phi_d$, which then have the opposite ``sides'' of lengths $\|bf\|$, $\|\phi_d\|=1$, and $\|a f^-\|$, respectively. Denote the angle at the corner $0$ by $\beta$ and the one at $af^-$ by~$\theta$. It is then immediate that
		\[
		\sin \theta = \frac{\|\Pi_{f^-}^{\perp} \phi_d\|}{\|b f\|}  \ge \frac{\|\Pi_{\mathcal{K}_d^-}^{\perp} \phi_d\|}{\|b f\|} = \frac{\|\Pi_{\mathcal{K}_d^-}^{\perp} f\|}{\|f\|},
		\]
		where we used $\mathrm{span}\{f^-\} \subset \mathcal{K}_d^-$ for the inequality and~$\Pi_{\mathcal{K}_d}^{\perp} f^- = 0$ for the equality. The sine law for this triangle yields
		\[
		\frac{\sin \theta}{\|\phi_d\|} = \frac{\sin \beta}{\|bf\|},
		\]
		giving $|b| = \frac{\sin\beta}{\|f\| \sin\theta}$. Since $\sin\beta\le 1$, combining this with the above proves the claim.
	\end{proof}
	\begin{remark}
		\quad
		\begin{enumerate}[(a)]
			\item  Note that in the expression \eqref{eq:epsi_pseudo} for $\ep$, the denominator represents the error to approximate $f$ by the $(d-1)$ observables $f\circ T^{1-d},\ldots,f \circ T^{-1}$ preceding it, and the numerator represents the error to approximate $\cU f$ by the $d$ observables~$f\circ T^{1-d},\ldots, f$ preceding it. 
			\item In \cite[Lemma 2]{mezic2022numerical}, there was an attempt to make a similar pseudospectral statement.
			The result, however, is questionable, since its proof seems to be incorrect: Their pseudo-eigenfunction $\mathbf{\tilde{e} \cdot \tilde{f}}$ does not have $L^2_{\mu}$-norm equal to~1, in general.
			This also impairs~\cite[Theorem~5]{mezic2022numerical}.
		\end{enumerate}
	\end{remark}
	Despite the appealing form of \eqref{eq:epsi_pseudo}, we do not seem to be able to draw practical conclusions from it for~$d\to \infty$. With going to a subsequence, however, we can make connections to the point spectrum. 
	The following is a consequence of~\cite[Theorem~4]{korda2018convergence}, by noting that $\cU_d$ is the $L^2_{\mu}$-orthogonal projection on~$\cK_d$.
	
	\begin{proposition}
		\label{prop:approxpoint}
		Let $(\lambda_d,\phi_d)$ be eigenpairs of $\cU_d$ for $d\in \N$ with~$\|\phi_d\|=1$. If $f$ is a cyclic vector of $\cU$, then there is a subsequence $(d_k)_k \subset \N$ such that $\phi_{d_k} \stackrel{w}{\to} \phi$ and $\lambda_{d_k} \to \lambda$, and~$\cU \phi = \lambda \phi$. In particular, if $\phi\neq 0$, then $(\lambda,\phi)$ is an eigenpair of~$\cU$.
	\end{proposition}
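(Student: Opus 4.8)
The plan is to run the standard Galerkin argument for approximate eigenpairs, using the only compactness that is available here: the eigenvalues lie in a compact set, and norm-one vectors in a Hilbert space have weakly convergent subsequences. (As remarked after the statement, one can alternatively invoke \cite[Theorem~4]{korda2018convergence}; the direct argument below is short.)

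First I would extract the subsequence. By Proposition~\ref{prop:projspectrum} every $\lambda_d$ lies in the closed unit disk, so $(\lambda_d)_{d\in\N}$ has a convergent subsequence. Since $\|\phi_d\|=1$ and the closed unit ball of a Hilbert space is weakly sequentially compact, along a further subsequence $(d_k)_k$ we may assume simultaneously $\lambda_{d_k}\to\lambda$ and $\phi_{d_k}\stackrel{w}{\to}\phi$ in $L^2_\mu$, with $|\lambda|\le 1$ and $\|\phi\|\le 1$.

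Next comes the key identity. Fix $g\in\bigcup_{d}\cK_d$, say $g\in\cK_D$. For every $d_k\ge D$ one has $g\in\cK_{d_k}$ and $\phi_{d_k}\in\cK_{d_k}$, so using that $\cU_{d_k}$ is the compression $\Pi_{\cK_{d_k}}\cU\vert_{\cK_{d_k}}$ (Proposition~\ref{prop:L2projU}) together with self-adjointness of the orthogonal projection and $\Pi_{\cK_{d_k}}g=g$,
\[
\langle \cU_{d_k}\phi_{d_k},g\rangle_{L^2_\mu}=\langle \Pi_{\cK_{d_k}}\cU\phi_{d_k},g\rangle_{L^2_\mu}=\langle \cU\phi_{d_k},\Pi_{\cK_{d_k}}g\rangle_{L^2_\mu}=\langle \cU\phi_{d_k},g\rangle_{L^2_\mu}.
\]
Combining with $\cU_{d_k}\phi_{d_k}=\lambda_{d_k}\phi_{d_k}$ gives $\langle \cU\phi_{d_k},g\rangle_{L^2_\mu}=\lambda_{d_k}\langle\phi_{d_k},g\rangle_{L^2_\mu}$. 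Letting $k\to\infty$: on the left, unitarity of $\cU$ yields $\langle\cU\phi_{d_k},g\rangle_{L^2_\mu}=\langle\phi_{d_k},\cU^{-1}g\rangle_{L^2_\mu}\to\langle\phi,\cU^{-1}g\rangle_{L^2_\mu}=\langle\cU\phi,g\rangle_{L^2_\mu}$ by weak convergence against the fixed vector $\cU^{-1}g$; on the right, $\lambda_{d_k}\to\lambda$ and $\langle\phi_{d_k},g\rangle_{L^2_\mu}\to\langle\phi,g\rangle_{L^2_\mu}$. Hence $\langle\cU\phi,g\rangle_{L^2_\mu}=\lambda\langle\phi,g\rangle_{L^2_\mu}$. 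Since $f$ is cyclic, $\bigcup_d\cK_d$ is dense in $L^2_\mu$ (cf.\ the proof of Proposition~\ref{prop:forecasterror}), so this holds for all $g\in L^2_\mu$, i.e.\ $\cU\phi=\lambda\phi$. If moreover $\phi\neq 0$, then $(\lambda,\phi)$ is by definition an eigenpair of $\cU$ (and, $\cU$ being unitary, necessarily $|\lambda|=1$).

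The only genuine subtlety — and the reason the conclusion is stated conditionally — is that weak limits need not preserve the norm: it can happen that $\phi_{d_k}\stackrel{w}{\to}0$, in which case the limit carries no spectral information. Everything else is routine, and in particular no quantitative (pseudospectral) estimate is needed for this statement.
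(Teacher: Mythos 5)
Your proof is correct, but it follows a different route from the paper: the paper gives no direct argument at all, obtaining Proposition~\ref{prop:approxpoint} as a consequence of \cite[Theorem~4]{korda2018convergence} after observing that $\cU_d$ is the $L^2_\mu$-orthogonal compression of $\cU$ onto~$\cK_d$. Your Galerkin-type argument---eigenvalues confined to the closed unit disk (Proposition~\ref{prop:projspectrum}), weak sequential compactness of the unit ball, testing $\cU_{d_k}\phi_{d_k}=\lambda_{d_k}\phi_{d_k}$ against a fixed $g\in\cK_D\subset\cK_{d_k}$ via self-adjointness of $\Pi_{\cK_{d_k}}$, and passing to the limit over the dense set $\bigcup_d\cK_d$---is exactly the standard mechanism behind the cited result, so what you gain is a short, self-contained proof; what the citation buys the paper is brevity and a link to the broader EDMD convergence theory, which yields more than this single statement. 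One point to tighten in a polished version: density of $\bigcup_d\cK_d$ is literally the statement that $f$ is cyclic for $\cU^{-1}$ (the hypothesis of Proposition~\ref{prop:forecasterror}, since $\cK_d$ is spanned by $f,\cU^{-1}f,\ldots,\cU^{-d+1}f$), whereas the proposition assumes $f$ cyclic for~$\cU$. This matches the paper's own convention (see the beginning of section~\ref{sec:genericity} and the ``time flip'' discussion in section~\ref{ssec:ST}), and for a unitary operator the two notions are indeed equivalent---e.g.\ via the spectral representation on the closed span of all integer powers of $\cU$ applied to $f$ together with complex conjugation, or via the Szeg\H{o}-type criterion of Theorem~\ref{thm:KolmogorovDensity}---but your appeal to the proof of Proposition~\ref{prop:forecasterror} silently uses this identification, so it deserves one explicit sentence rather than being left implicit.
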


	At this point it is important to note that other methods than the least squares filter allow to make stronger claims about faithful approximation of (parts of) the spectrum of~$\cU$, see~\cite{GiDa19,korda2020data,giannakis2021delay,colbrook2021rigorous,colbrook2023mpedmd,valva2023consistent}.

	\subsection{Approximate filter}

	Both Theorem~\ref{thm:DensePrevalent} and Proposition~\ref{prop:DenseGd} imply that the set of systems that are predictive for a large set of observables is weakly dense in~$\mpt$. As the weak topology is metrizable \cite[p.~64]{Hal17}, there is some predictive map ``arbitrary close'' to our original system~$T$. Recall, however, that a practical interpretation of this metric seems hardly possible according to Halmos.
	To remedy this fact somewhat, we will consider this closeness statement on the level of filters. In particular, we show in Proposition~\ref{prop:filterapprox} below that there is a predictive system that produces a filter arbitrary close to that produced by~$T$.
	
	Note that, by ergodicity of $T$, for almost every $x\in\set{X}$ we obtain the same filter~$c$ (cf.\ Lemma~\ref{lem:filter_convergence}). A quick inspection of the structure of the objective function~$J$ in~\eqref{eq:filter_new} reveals that the actual observation sequence is of little relevance for the filter $c$; what matters are the correlation integrals $\int (f\circ T^{-i})\, (f\circ T^{-j})\,d\mu$, $i,j=-1,0,\ldots,d-1$. Note that Lemma~\ref{lem:weakapprox}(5) gives for a sequence $(T_n)_n \subset\mpt$ with $\smash{ T_n\stackrel{w}{\to} T }$ that
	\begin{equation}
		\label{eq:weakapprox_corr}
		\lim_{n\to\infty} \int (f\circ T_n^{-i})\, (f\circ T_n^{-j})\,d\mu = \int (f\circ T^{-i})\, (f\circ T^{-j})\,d\mu	
	\end{equation}
	for all~$i,j = -1,0,\ldots,d-1$. Since for a fixed $d\in\N$ there are finitely many pairs $(i,j)$ to consider, this convergence is uniform across these pairs. Thus, Theorem~\ref{thm:DensePrevalent}, Equation~\eqref{eq:weakapprox_corr}, together with 
	Lemma~\ref{lem:filter_convergence} gives
	\begin{proposition}
		\label{prop:filterapprox}
		Let $T\in\mpt$ and $d\in\N$ a fixed delay depth. Let $f\in L^2_\mu$ be an observable such that $\mathcal{B}_d$ is a basis. Then there are predictive $S\in\mpt$ that produce filters arbitrary close to that of~$T$.
	\end{proposition}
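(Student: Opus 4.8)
The plan is to obtain the claim by pasting together the three facts already available. First I would fix $T\in\mpt$, the delay depth $d\in\N$, and the observable $f\in L^2_\mu$ with $\mathcal{B}_d$ a basis. As in the proof of Lemma~\ref{lem:filter_convergence}, this means the objective $J$ in~\eqref{eq:filter_new} is a quadratic form whose Hessian $H_T$---the Gram matrix of $f, f\circ T^{-1},\ldots, f\circ T^{-d+1}$---is symmetric positive definite, so the filter of $T$ is the unique vector $c_T = H_T^{-1} b_T$, where both $H_T$ and the right-hand side $b_T$ are assembled from the correlation integrals $\int (f\circ T^{-i})(f\circ T^{-j})\,d\mu$ with $i,j = -1,0,\ldots,d-1$.

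Next I would invoke Theorem~\ref{thm:DensePrevalent} to produce a sequence $(T_n)_n \subset \mpt$ with $T_n \stackrel{w}{\to} T$, each $T_n$ carrying a prevalent---in particular nonempty---set of cyclic vectors, hence each $T_n$ predictive. By~\eqref{eq:weakapprox_corr} (a consequence of Lemma~\ref{lem:weakapprox}(5)) the correlation integrals for $T_n$ converge to those for $T$, and since only finitely many index pairs $(i,j)$ occur this yields $H_{T_n} \to H_T$ and $b_{T_n} \to b_T$. Because positive definiteness is an open condition on symmetric matrices, $H_{T_n}$ is positive definite for all large enough $n$; for such $n$ the family $\mathcal{B}_d$ is again a basis, the filter $c_{T_n} = H_{T_n}^{-1} b_{T_n}$ is well defined, and the continuity of $(H,b) \mapsto H^{-1} b$ on pairs with invertible $H$---already used in the proof of Lemma~\ref{lem:filter_convergence}---gives $c_{T_n} \to c_T$. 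Setting $S := T_n$ for $n$ large enough then exhibits a predictive transformation whose filter is arbitrarily close (as a vector in $\C^d$) to that of $T$.

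The step I expect to require the most care is ensuring that the approximating transformations $S$ still admit a well-defined filter for the \emph{same} observable $f$: a priori $f$ need not be a cyclic vector of $\cU_S$, and, more importantly, $\mathcal{B}_d$ could fail to be a basis for $S$. This is precisely where openness of positive definiteness---equivalently, lower semicontinuity in $S$ of the smallest eigenvalue of the Gram matrix of $f, f\circ S^{-1},\ldots, f\circ S^{-d+1}$---is needed, and it is what prevents the proposition from being a purely formal corollary of Theorem~\ref{thm:DensePrevalent}. Everything else reduces to continuity of finite-dimensional linear algebra together with the convergence~\eqref{eq:weakapprox_corr}.
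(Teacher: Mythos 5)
Your argument is correct and is essentially the paper's own proof: Theorem~\ref{thm:DensePrevalent} supplies weakly approximating predictive transformations, \eqref{eq:weakapprox_corr} gives convergence of the finitely many correlation integrals, and the continuity of $(H,b)\mapsto H^{-1}b$ used in Lemma~\ref{lem:filter_convergence} transfers this to the filter vectors. Your extra observation that positive definiteness of the Gram matrix is an open condition---so that $\mathcal{B}_d$ remains a basis for the approximating $S$ and its filter is well defined---is a point the paper leaves implicit, but it does not change the route.
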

	
	\begin{remark}
		In particular, Proposition~\ref{prop:filterapprox} implies that based on the filter alone, which is computed from (finite or infinite) observation data, one is not able to infer whether the system at hand is predictive. This also implies that the spectrum of $\cU_d$ (which is for a fixed $d$ a continuous function of the filter vector $c$) is not able to distinguish whether the observed time series comes from a predictive system.
	\end{remark}
	
	We stress that Proposition~\ref{prop:filterapprox} is a statement for a fixed delay depth, and in particular does not imply anything about the forecast error~$\| \cU_d f - \cU f\|_{L^2_\mu}$ in the limit~$d\to \infty$.

	\section{Numerical examples}
	\label{sec:examples}
	
	We illustrate our results with a variety of examples properties of the linear least squares filter. In particular, we will consider the prediction error and the autocorrelations.
	
	\subsection{Numerical setup}
	
	The filter is learned on (observations of) a long ``training'' trajectory of length~$(m+1)$, cf.~\eqref{eq:filter_finite}. The mean squared forecast error is computed on a different ``testing'' trajectory, of the same length $(m+1)$, by sliding the filter along the trajectory and comparing the prediction with the next (the true) element on the trajectory. By ergodicity, the mean square error thus approximates~$\smash{ \| \cU_d f - \cU f \|_{L^2_\mu}^2 }$ in the limit of~$m\to\infty$.
	
	The autocorrelations are compared as follows. We generate a large sample of $\mu$-distributed initial points~$x^{(i)}_1$, $i=1,\ldots,N$.
	This is done either by random uniform sampling, as in most cases it is the unique ergodic distribution, or by a long trajectory of a rationally independent stepsize compared with the one used for learning and testing, in the case of the one time-continuous system. From each of these initial conditions we generate trajectory ``snippets'' of length $d$, denoted by $x^{(i)}_1, x^{(i)}_2 = T x^{(i)}_1, \ldots, x^{(i)}_d = T^{d-1} x^{(i)}_1$, and continue each snippet in two ways:
	\begin{enumerate}[(i)]
		\item By the true map, and apply the observation map to the elements at the end, giving
		\[
		f \big(x^{(i)}_1\big), f \big( x^{(i)}_2\big), \ldots, f \big( x^{(i)}_d\big), f \big( x^{(i)}_{d+1} = T x^{(i)}_d\big), f \big( x^{(i)}_{d+2} = T^2 x^{(i)}_d\big), \ldots
		\]
		\item By applying the filter iteratively to the observed trajectory ``snippet'' to generate an observation sequence of the same length as in~(i), giving
		\[
		z^{(i)}_1 = f \big(x^{(i)}_1\big), z^{(i)}_2 = f \big( x^{(i)}_2\big), \ldots, z^{(i)}_d = f \big( x^{(i)}_d\big), z^{(i)}_{d+1}, z^{(i)}_{d+2}, \ldots,
		\]
		where $z^{(i)}_n$, $n>d$, is obtained by the filter applied to~$z^{(i)}_{n-d},\ldots, z^{(i)}_{n-1}$.
	\end{enumerate}
	We then approximate the autocorrelations of the system and the filter by
	\begin{align*}
		A(n) & = \frac1N \sum_{i=1}^N f\big( x^{(i)}_d \big) f\big( x^{(i)}_{d+n} \big), \\
		A_d(n) & = \frac1N \sum_{i=1}^N z^{(i)}_d z^{(i)}_{d+n},
	\end{align*}
	respectively. Unless stated differently, we use~$m=N=10^4$ in the examples.
	
	\subsection{Ergodic torus rotation}
	\label{ssec:ergodictorus}
	
	First we consider the ergodic rotation of the unit 2-torus~$S^1\times S^1$:
	\[
	T(x) = \begin{pmatrix}
		x_1 + \sqrt{2} \\
		x_2 + \sqrt{3}
	\end{pmatrix} \mod 1.
	\]
	We use the observables $f_1(x) = \exp( \sin(4\pi x_1) + \cos(6\pi x_2 ))$ and $f_2 = \chi_{[0,1/2]\times[1/2,1]}$, where $\chi_A$ denotes the characteristic function of the set~$A$.
	
	\begin{figure}[htbp]
		\centering
		\includegraphics[width = 0.51\textwidth]{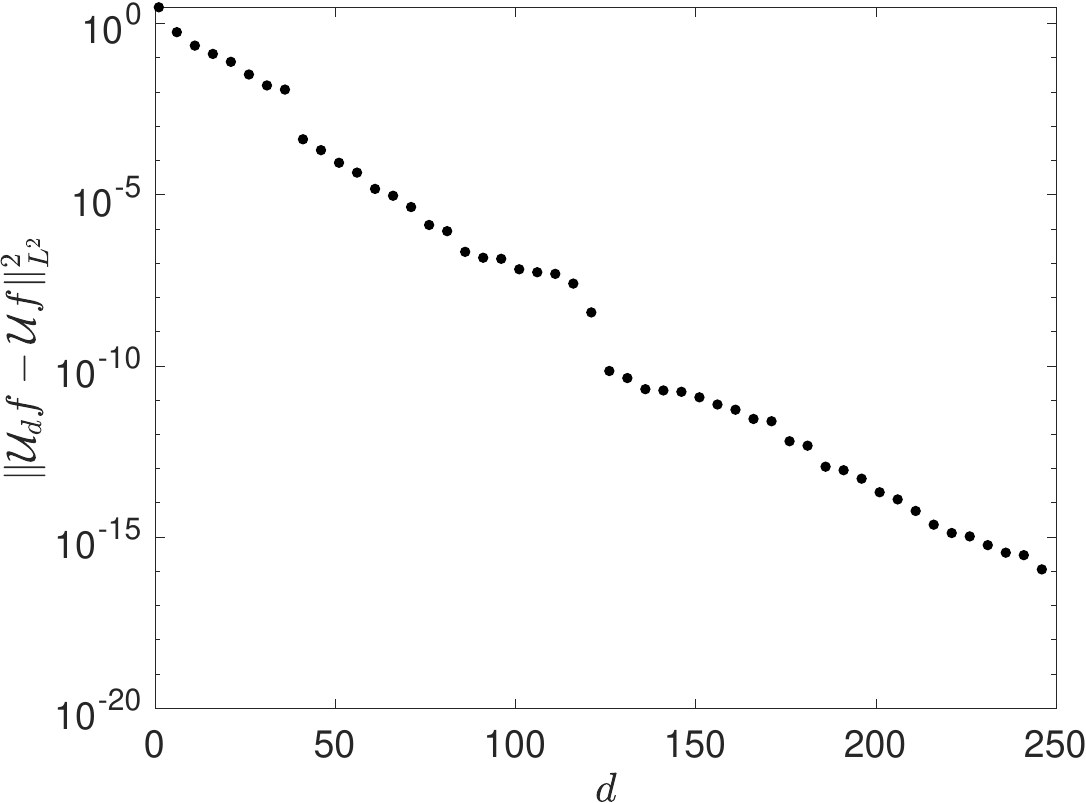}
		\hfill
		\includegraphics[width = 0.48\textwidth]{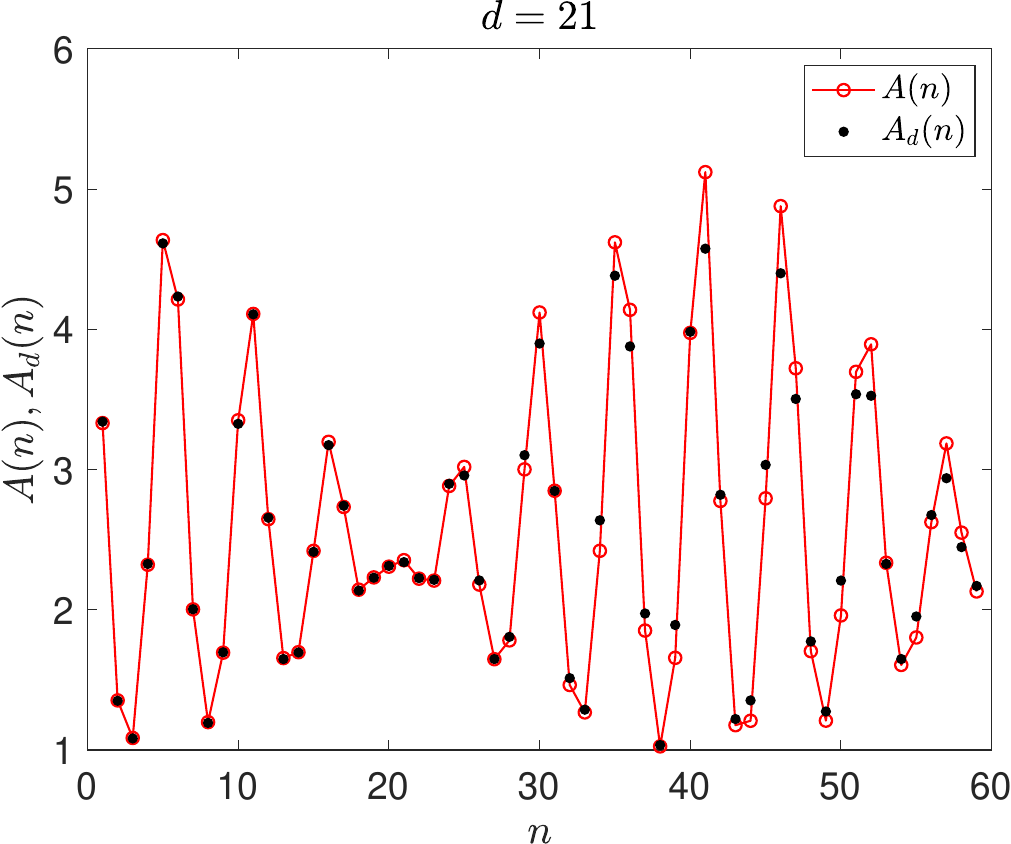}
		\caption{Ergodic torus rotation, results for observable~$f_1$. Left: $L^2$ prediction error. Right: autocorrelations.}
		\label{fig:torus}
	\end{figure}
	
	\begin{figure}[htbp]
		\centering
		\includegraphics[width = 0.49\textwidth]{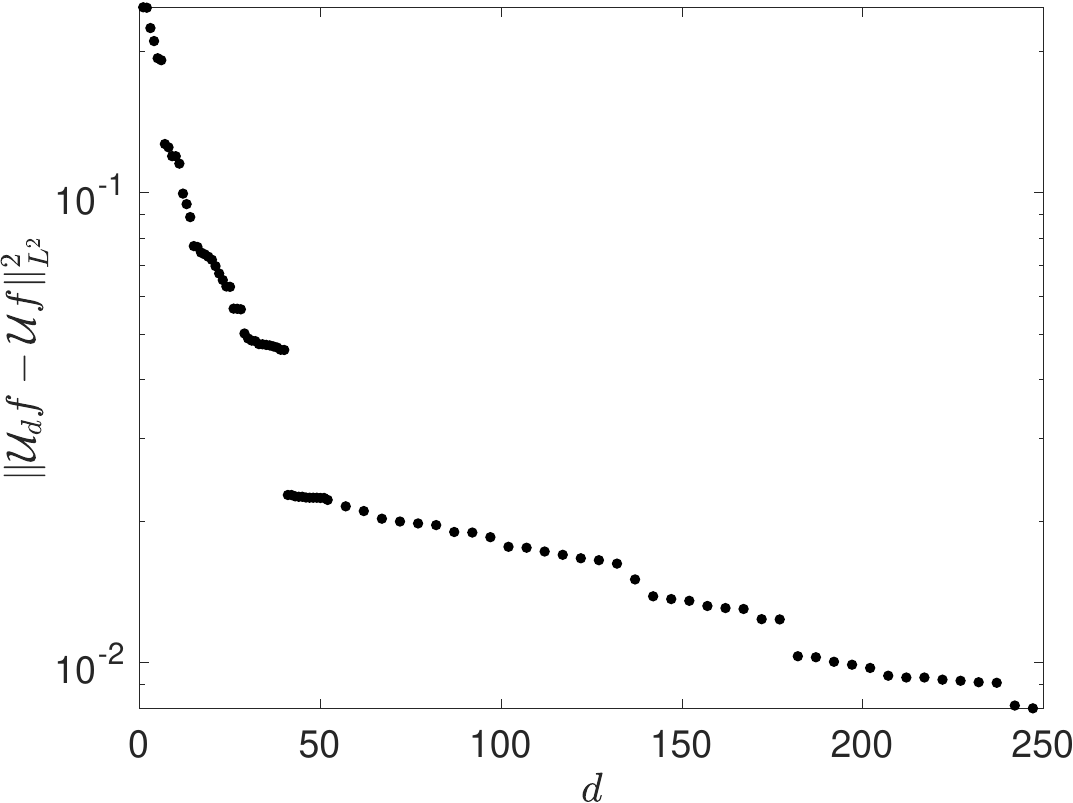}
		\caption{Ergodic torus rotation. $L^2$ prediction error for observable $f_2$.}
		\label{fig:torus_obs2}
	\end{figure}
	
	In the mean squared forecast error we observe exponential convergence in the number of delays, see Figure~\ref{fig:torus} (left) and Figure~\ref{fig:torus_obs2}. 
	Note the scale on the vertical axes: The convergence is slower the rougher the observable is, and in particular for non-continuous functions. Why in Figure~\ref{fig:torus_obs2} around $d=35$ the sudden slow-down occurs is yet to be understood.
	In Figure~\ref{fig:torus} (right) we see that the filter's autocorrelations $A_d(n)$ are exact up to the chosen delay depth ($d=21$ there), and qualitatively keep the correct behavior even up to~$n=3d$.
	
	\subsection{The affine twist map}
	
	We consider
	\[
	T(x) = \begin{pmatrix}
		x_1 + \sqrt{2} \\
		x_1 + x_2
	\end{pmatrix} \mod 1,
	\]
	again on the unit 2-torus.
	This map has mixed spectrum: Its Kronecker factor\footnote{The Kronecker factor of a transformation is the maximal factor with pure point spectrum.} is the irrational circle rotation $R_{\sqrt{2}}$, the spectrum
	in the orthogonal complement to this factor is countable Lebesgue \cite[Example 3.17]{KatokThouvenot}: Roughly speaking this part of the spectrum can be represented by the Lebesgue measure on $\bT$ and has everywhere infinite multiplicity.
	In summary, the affine twist map has so-called \emph{quasi-discrete spectrum}.\footnote{A quasi-eigenfunction of order one is an ordinary eigenfunction of~$\cU_T$. A function $f\in L^2$, $f\neq 0$, is called a \emph{quasi-eigenfunction for $T$ of order $n+1$} if $\cU_Tf=\varphi f$, where $\varphi$ is a quasi-eigenfunction of order~$n$. One says that $T$ has a \emph{quasi-discrete spectrum} if the span of quasi-eigenfunctions of all possible orders is dense in $L^2$.}
	
	We use the observable $f(x) = \exp( 2\sin(4\pi x_1) + \cos(6\pi x_2 ))$.
	\begin{figure}[htbp]
		\centering
		\includegraphics[width = 0.49\textwidth]{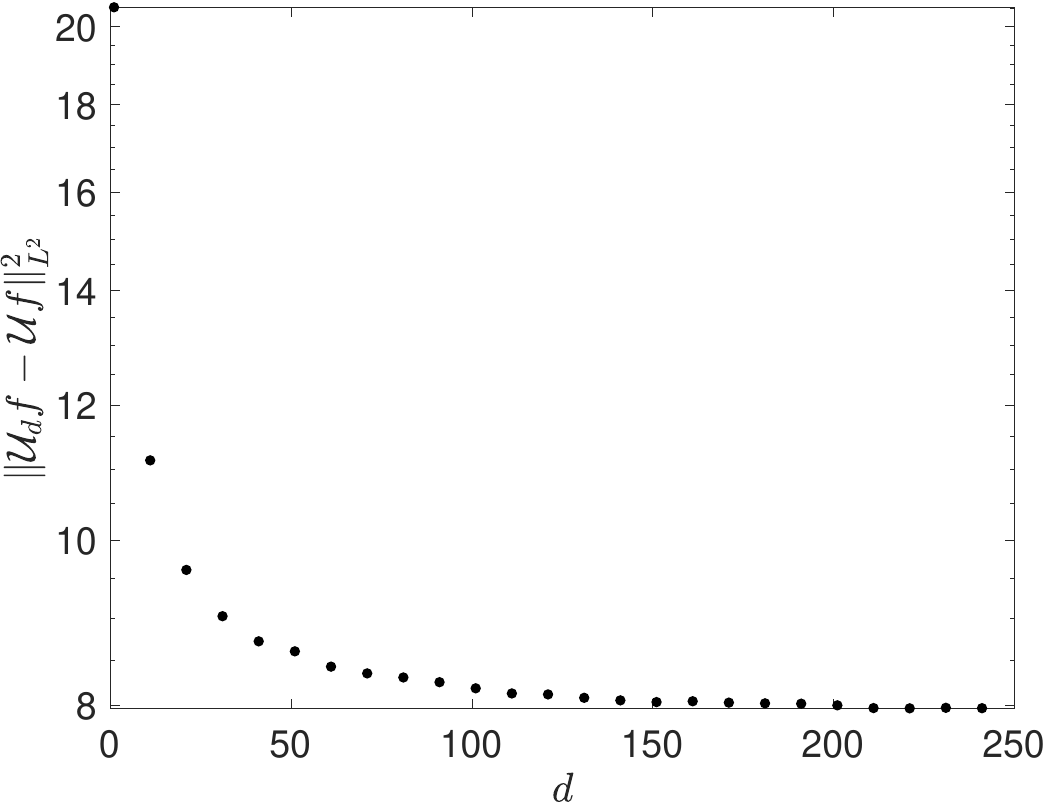}
		\hfill
		\includegraphics[width = 0.49\textwidth]{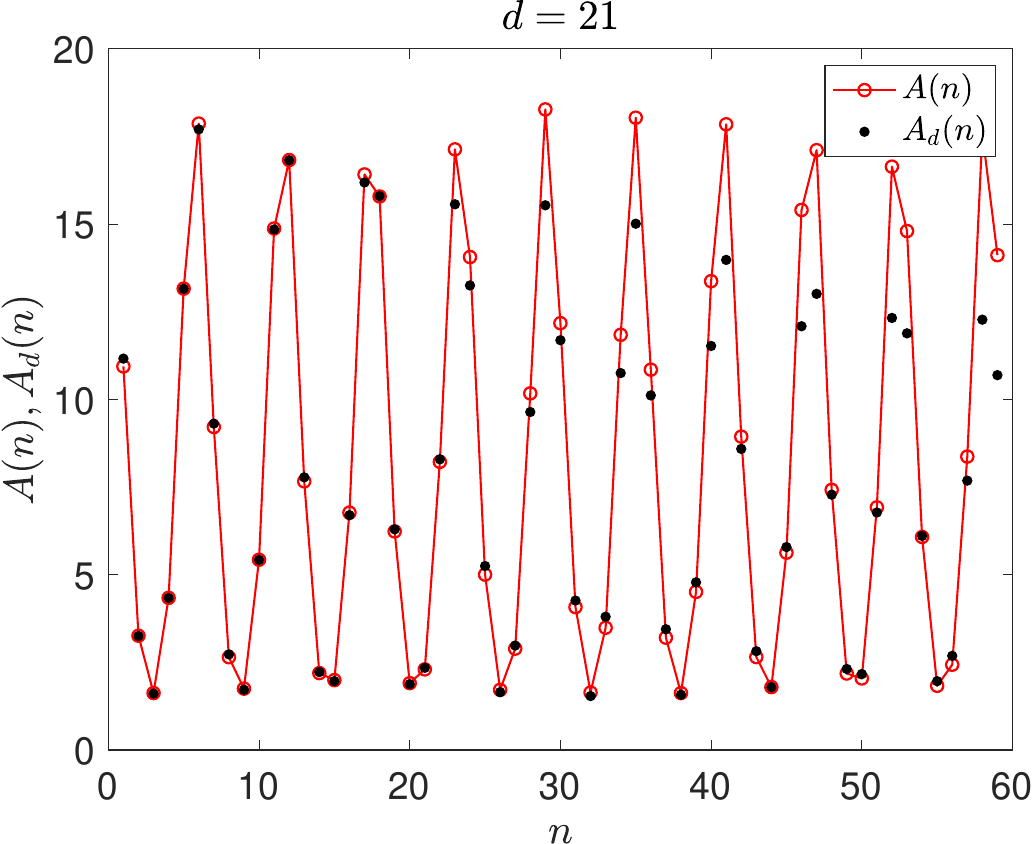}
		\caption{$L^2$ prediction error (left) and autocorrelations for the affine twist map (right).}
		\label{fig:affine}
	\end{figure}
	In Figure~\ref{fig:affine} (left), we observe that at some point the prediction error ceases to improve with increasing delay. This indicates that $(T,f)$ is not predictive (not even weakly).
	
	Unsurprisingly, reducing the dependence of the observable $f$ on the ``mixing coordinate''~$x_2$ improves the prediction error. For $f(x) = \exp( 2\sin(4\pi x_1) + 0.01\cos(6\pi x_2 ))$ the prediction error curve in Figure~\ref{fig:affine} (left) looks similar in shape, but converges to a nonzero value less than~$10^{-3}$ (not shown). For observables satisfying $f(x) = \tilde{f}(x_1)$ for some $\tilde{f}$ the prediction error behaves as in the previous example.
	
	The autocorrelations behave as predicted by Proposition~\ref{prop:autocorr} (we chose $d=21$), they seem to maintain the right ``frequency'' even for $n>d$, however, their ``amplitudes'' decay faster.
	
	\subsection{Von Neumann--Kakutani's transformation} \label{subsec:odometer}
	
	The von Neumann--Kakutani's transformation~\cite{neumann1932operatorenmethode}, $T:[0,1] \to [0,1]$, is also known under the names dyadic odometer or Corput's transformation.
	In \cite[section 1.3.1]{ferenczi1997systems}, it is given in the form
	\[
	T\Big(1-\frac{1}{2^n} + y \Big) = \frac{1}{2^{n+1}}+y \quad \forall\ 0\le y < \frac{1}{2^{n+1}}, \ n\in\N_0.
	\]
	After some manipulations, the following form can be obtained:
	\[
	T(x) = x-1 + \frac{3}{2^{n(x)}},\qquad n(x):=  \lfloor -\log_2(1-x) \rfloor.
	\]
	This transformation is rank one and has discrete spectrum, with the eigenvalues being the dyadic rationals~\cite{ferenczi1997systems}. The main difference to the torus rotation (which also has discrete spectrum) is that while the spectrum of the latter in section~\ref{ssec:ergodictorus} is generated by the two irrational factors~$\sqrt{2}$ and~$\sqrt{3}$, here there are countably many numbers generating the spectrum.
	We use the observable~$f(x) = \exp(\sin(6\pi x))$, and~$N=10^5$ for increased testing accuracy.
	
	Although we observe monotonic decrease in the prediction error in Figure~\ref{fig:odometer} (left), the error decrease is slowing down for larger delay depths. We attribute this to the complexity of the system's spectrum, of which the method is picking up more and more as $d$ increases (see also Figure~\ref{fig:odometer_spectrum}). The autocorrelations also show a more complex picture as compared with the previous examples, thus we depict it for a larger delay depth $d=51$ in Figure~\ref{fig:odometer} (right). Despite the complexity, we note that the autocorrelations are very well reproduced on the considered time scale~$n=3d$.
	
	We show the spectra in Figure~\ref{fig:odometer_spectrum}, which are in this case also reproduced by the least squares filter to a high accuracy.
	
	\begin{figure}[htbp]
		\centering
		\includegraphics[width = 0.49\textwidth]{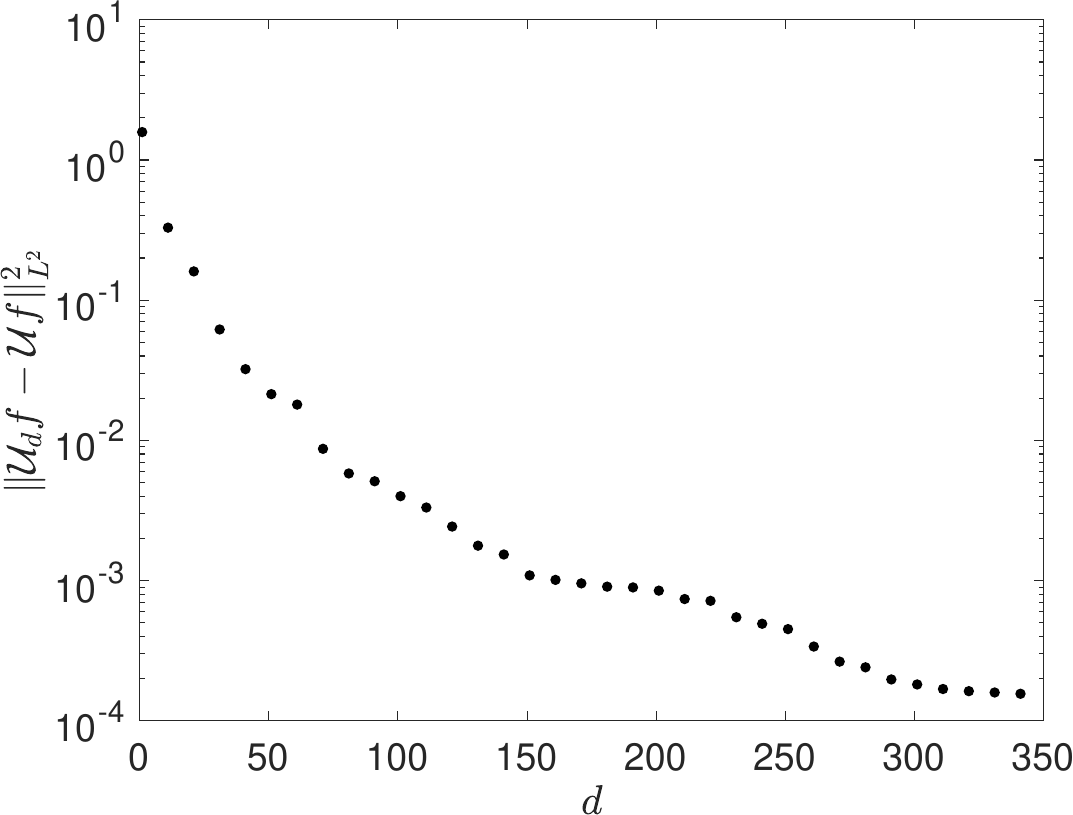}
		\hfill
		\includegraphics[width = 0.49\textwidth]{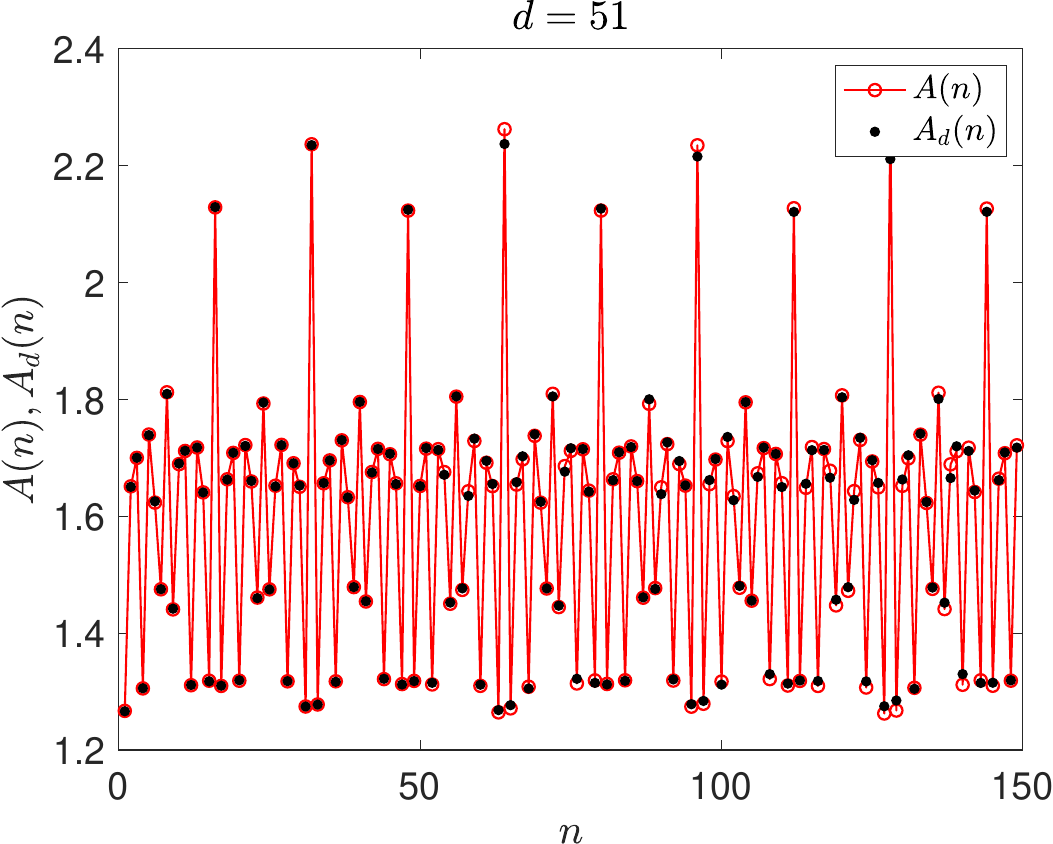}
		\caption{The von Neumann--Kakutani's transformation. $L^2$ prediction error (left) and autocorrelations (right).}
		\label{fig:odometer}
	\end{figure}
	
	\begin{figure}[htbp]
		\centering
		\includegraphics[width = 0.49\textwidth]{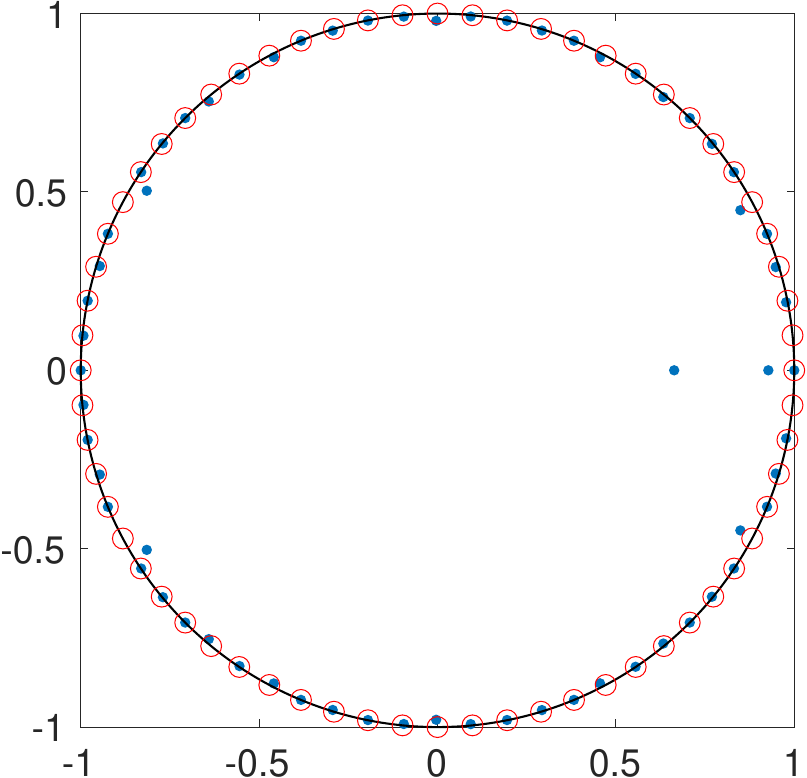}
		\hfill
		\includegraphics[width = 0.49\textwidth]{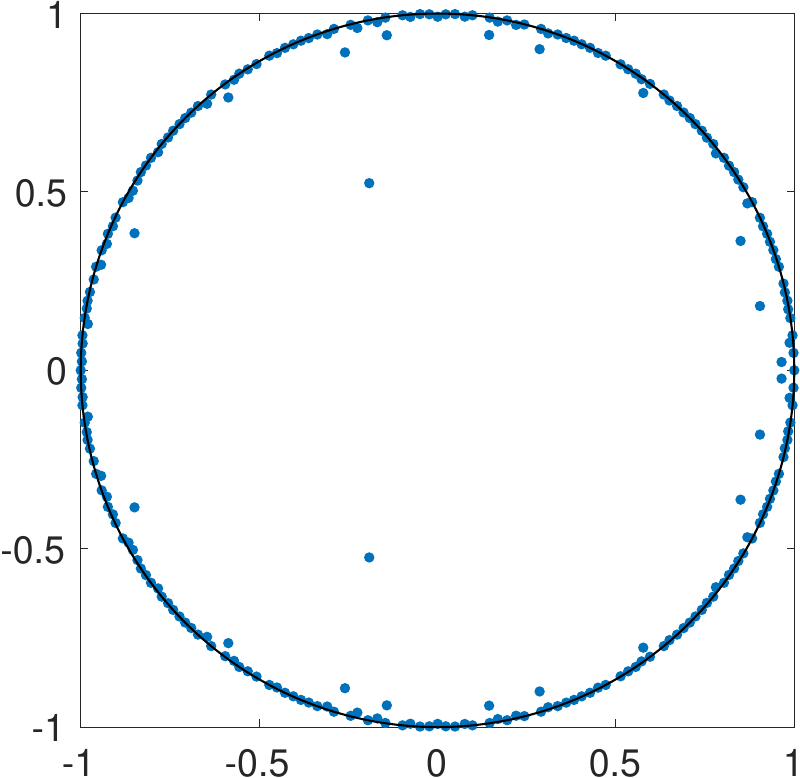}
		\caption{Spectra for the von Neumann--Kakutani's transformation. Blue dots indicate the spectrum of $U_d$ and the black line is the complex unit circle. Left: $d=64$, red circles show ``dyadic rationals'' $\exp(2\pi i \frac{k}{64})$, $k=0,\ldots,63$, that are known to be eigenvalues of~$\cU$. Right:~$d=250$.}
		\label{fig:odometer_spectrum}
	\end{figure}

	\subsection{The Lorenz '63 model}
	
	We consider the Lorenz '63 (L63) system
	\[
	x_1' = \sigma(x_2-x_1), \qquad 
	x_2' = x_1(\rho-x_3) - x_2, \qquad
	x_3' = x_1 x_2 - \beta x_3,
	\]
	with the usual choice of parameters~$\sigma=10, \beta = \frac83, \rho=28$. The discrete time system $T$ that generates the time series is taken as the time-$t$-flow map $\phi^t$ of this system for~$t=0.05$. It is realized by the ``classical'' 4th order Runge--Kutta method with step size~$5\cdot 10^{-4}$.
	\revision{This flow time is considerably smaller than the Lyapunov timescale---the inverse of the largest Lyapunov exponent, used to measure the timescale on which chaotic motion can be observed---of the L63 system, known to be approximately~$t_{\mathrm{Lyap}}\approx 1.1$ \cite{geurts2020lyapunov}.}
	
	\revision{Two equilibria of the L63 system are at $x_{\pm} = (\pm\sqrt{\beta(\rho-1)}, \pm\sqrt{\beta(\rho-1)}, \rho-1 )$. We set $\delta = \sqrt{\beta(\rho-1)}$ and consider the two observables
		\[
		f_1(x) = x_1 \quad \text{and} \quad f_2(x) = \exp\left( - \frac{\|x-x_+\|_2^2}{(\delta/3)^2} \right),
		\]
		with $f_2$ depicted in Figure~\ref{fig:l63_obs}.}
	
	\begin{figure}[htbp]
		\centering
		\includegraphics[width=0.3\textwidth]{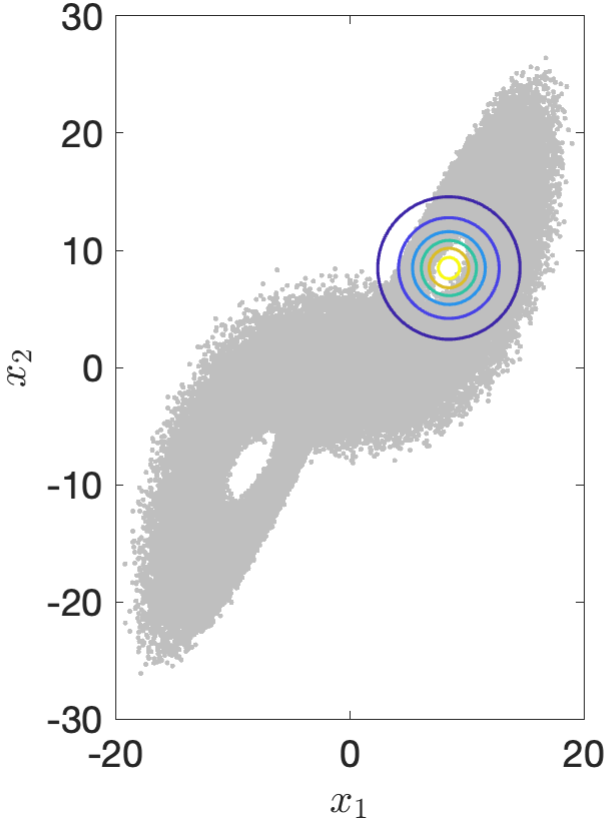}
		\caption{Observable for the L63 system, contours shown at~$z=27$. Grey dots indicate the projections of the training points onto the $(x_1,x_2)$ plane.}
		\label{fig:l63_obs}
	\end{figure}
	
	The initial points $x^{(i)}_1$ for the autocorrelation computation are obtained from a trajectory of length $N=10^6$ generated by the time-$t$-flow map of L63 with~$t=40 h$, using the step size~$h=5.3\cdot 10^{-4}$ and a random initial point close to~$(4,7,16)^{\intercal}$. \revision{We choose~$m=10^5$, as we have validated separately that increasing $m$ does not increase the fidelity of our results.}
	
	The Lorenz system is mixing~\cite{luzzatto2005lorenz} and hence has purely continuous spectrum.
	\revision{Theorem~\ref{thm:weakpred} tells us to expect predictiveness only if the trace measure satisfies the Szeg{\H o} condition. For the observable $f_1$ the trace measure, and in particular the associated density $w$, has been computed in~\cite[Figure~11 (top left)]{korda2020data}, showing that $w$ is bounded away from zero\footnote{\label{foot:density}In \cite{korda2020data}, the flow time $t=0.2$ is used, whereas we took~$t=0.05$. We used their method to confirm that both spectral densities of $\phi^{0.05}$, with respect to $f_1$ and $f_2$, are bounded away from zero (not shown).}. This implies that the Szeg{\H o} condition is not satisfied and that the prediction error cannot converge to zero. Indeed, we observe that for increasing $d$ the prediction error saturates at a nonzero value, see Figure~\ref{fig:l63_x1} (left). The left panel actually shows a comparison of multistep prediction errors of filters for several delay depths $d$ and flow times~$t \in \{0.05, 0.1, 0.2, 0.4\}$ (blue, red, orange, and purple, respectively). The prediction was made for time $0.4$, i.e., for $8,4,2,1$ steps of the filters associated to the respective flow times. The corresponding errors are shown versus the product $dt$ of delay depth multiplied by the flow time. We observe that the error does not improve after $d t \approx 5$, i.e., the filter gains no advantage from accessing observations from more than a handful Lyapunov times in the past. Additionally, the (in $d$ saturated) errors $\smash{ \| \cU^{0.4} f - \cU^{0.4}_d f \|^2_{L^2_\mu} }$ increase with increasing~$t$. This is in accordance with the intuition that ``larger flow times lose more information about the initial condition''. An interesting question, that we do not pursue any further here,  would be how the error behaves for $t\to 0$ as $dt$ is kept constant.
	We also note that the blue and red curves ($t=0.05$ and $t=0.1$, respectively) are not monotonic in~$d$. This is not in conflict with the proof of Proposition~\ref{prop:forecasterror}, because there monotony is shown only for the one-step forecast error, and here we depict multistep errors for these flow times.}
	
	\revision{Figure~\ref{fig:l63_x1} (right) shows the autocorrelation comparison as in the previous examples, for $t=0.05$ and~$d=21$. For steps beyond the delay depth the filter underestimates the true autocorrelation.}
	
	\begin{figure}[htb]
		\centering
		\includegraphics[width = 0.49\textwidth]{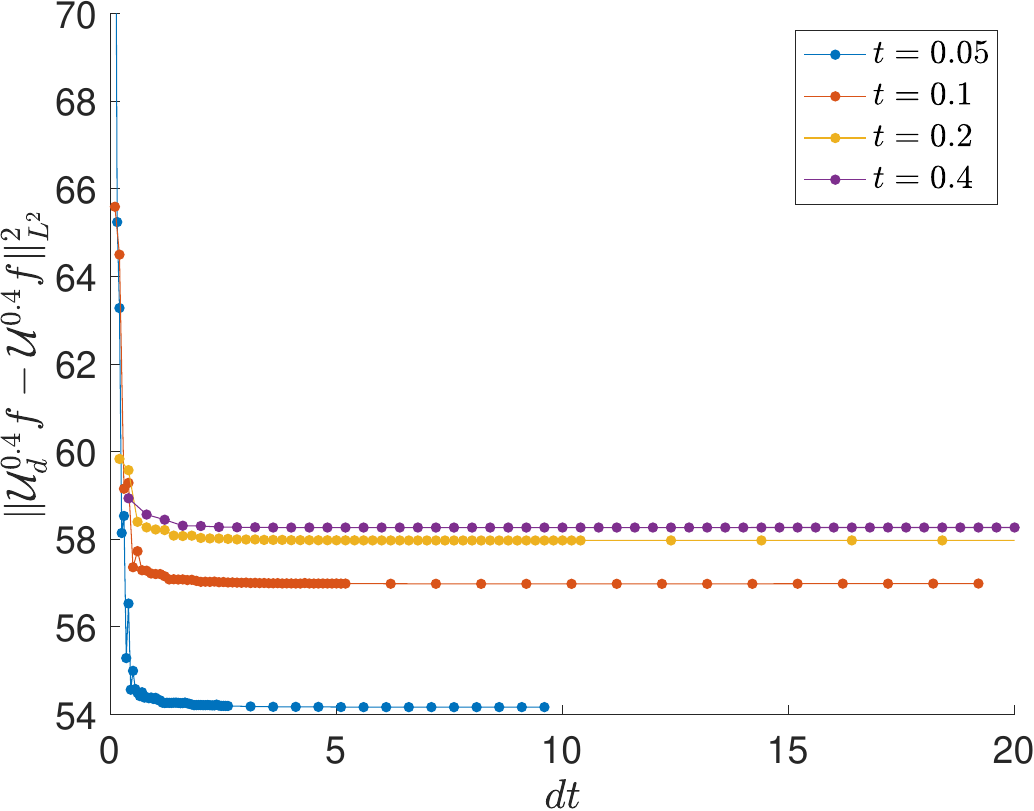}
		\hfill
		\includegraphics[width = 0.49\textwidth]{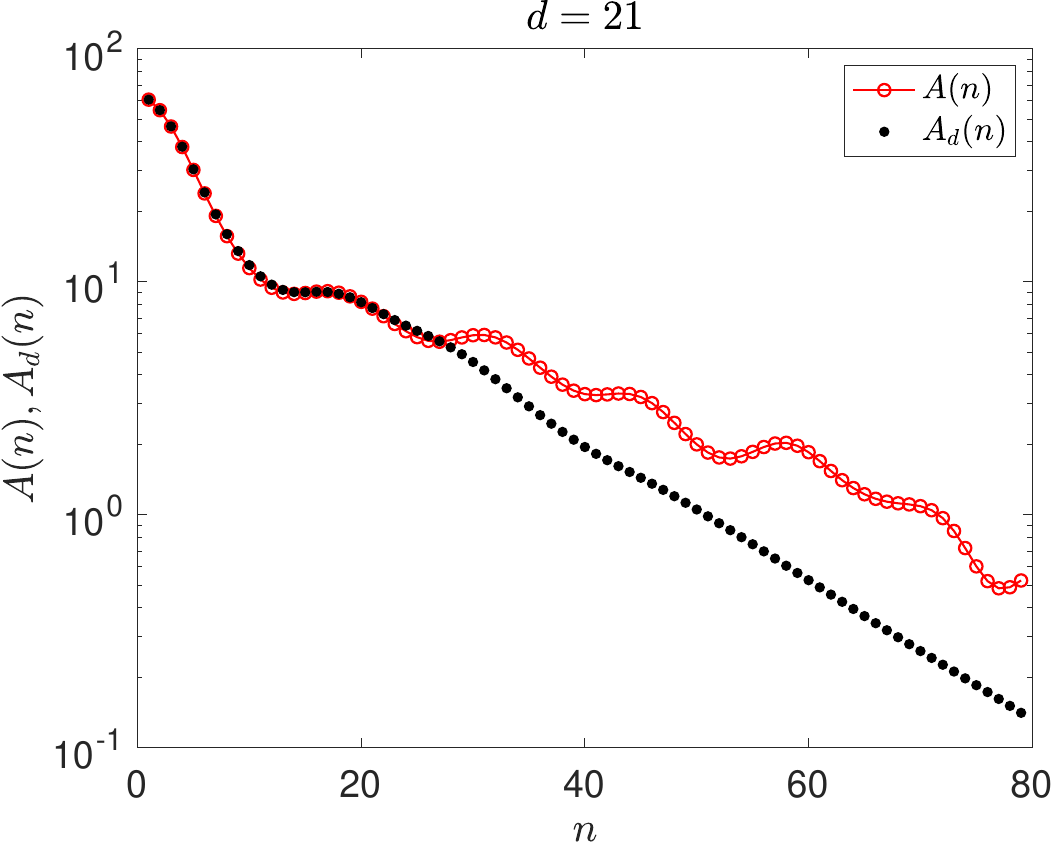}
		\caption{\revision{L63 system with observable $f_1$. Left: $L^2$ prediction error for prediction time~$0.4$, using flow times~$t \in \{0.05, 0.1, 0.2, 0.4\}$. Right: Autocorrelation for the filter with $d=21$ using flow time~$t=0.05$.}}
		\label{fig:l63_x1}
	\end{figure}
	
	\revision{The observable $f_2$ is chosen to pick up a finer-scale dynamics around the equilibrium~$x_+$. Note that by Footnote~\ref{foot:density}, we expect it to be unpredictive as well. Indeed, this is confirmed by the error saturating at a nonzero value in Figure~\ref{fig:l63} (left).} The autocorrelations up to the delay depth $d=21$ are reproduced exactly, and their exponential decay and qualitative behavior persist beyond that lag; shown in Figure~\ref{fig:l63} (right) up to~$n=80 \approx 4d$.
	
	\begin{figure}[htbp]
		\centering
		\includegraphics[width = 0.49\textwidth]{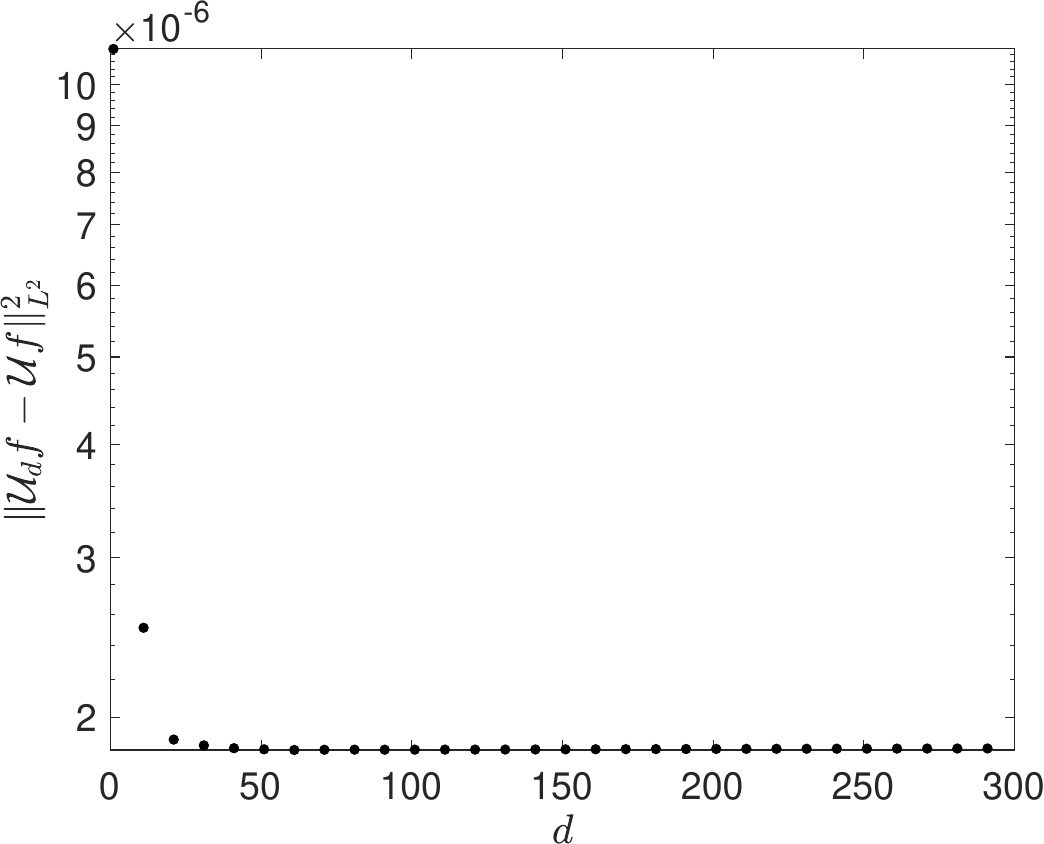}
		\hfill
		\includegraphics[width = 0.49\textwidth]{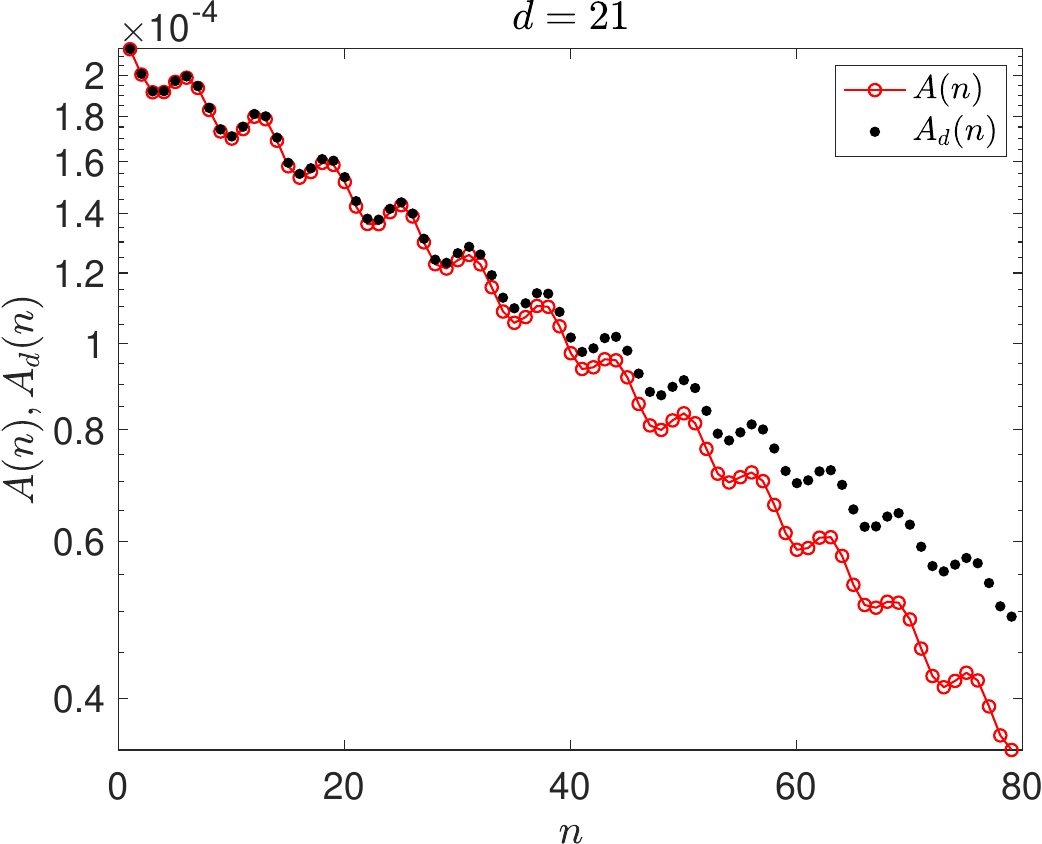}
		\caption{L63 system with observable $f_2$. $L^2$ prediction error (left) and autocorrelations (right).}
		\label{fig:l63}
	\end{figure}

	\section{Discussion and outlook}
	\label{sec:outlook}
	
	We considered the linear least squares filter (Wiener filter) for prediction of time series and showed Takens-type theorems about the size of the set of dynamics-observables pairs that allow asymptotically exact prediction in the limit of infinite delay depth. Since our setup was ergodic-theoretic and a central role was played by the Koopman operator, we call such a result a ``Koopman--Takens'' theorem. The main results, to be found in sections~\ref{ssec:ST} and~\ref{sec:genericity}, show among other things that a generic measure-preserving transformation together with a generic square-integrable observable is a predictive pair.
	
	Our results showed that systems with discrete spectrum are particularly well suited for the prediction problem (see, e.g., Proposition~\ref{prop:DiscSpecWeakPredict} and Theorem~\ref{thm:DensePrevalent}); this has also been observed by others, see, e.g.~\cite{GiDa19}. In general, we conjectured that discrete spectrum together with ergodicity (or perhaps even merely the rank one property) implies that a prevalent set of cyclic observables exist, see Conjecture~\ref{conj:discr_spec_prevalence}.
	Interestingly, however, mixing systems (that are known to have purely continuous spectrum) can be predictive as well, see the discussion after Proposition~\ref{prop:DiscSpecWeakPredict} and Remark~\ref{rem:mixingPred}.
	Further, Proposition~\ref{prop:filterapprox} indicated that the filter with finite delay depth alone does not allow us to draw conclusions about predictiveness. This can be done by validating the mean squared forecast error on a test set.
	In numerical experiments not reported here we observed that the spectrum of $U_d$ can suggest information about the type of spectrum of the system; see also the discussion on the spectral convergence of Hankel DMD in~\cite{korda2020data}. This needs further rigorous elaboration.
	
	On a quantitative level our numerical experiments suggest, on the one hand, that the forecast error decays with increasing delay depth faster for smooth observables than for nonsmooth (e.g., discontinuous) ones. On the other hand, less complicated spectrum has a similar effect: While the torus rotation in section~\ref{ssec:ergodictorus} has a spectrum generated by two scalars (the entries of the translation vector), the von Neumann--Kakutani's transformation has a spectrum generated by all $2^{-n}$, $n\in \N$.
	
	Natural next steps would be to consider vector-valued observations, investigate the effect of noise in the observations, and to derive quantitative error bounds for finite delay depths (in contrast to the asymptotic results given herein). Further, even closer to the spirit of Takens' theorem would be to have results on some measure-theoretic reconstruction of the state space in terms of the infinite observation sequences.

	\section*{Acknowledgments}
	
	\revision{We thank Friedrich Philipp for discussions on weak predictiveness and for a part of the proof of Theorem~\ref{thm:weakpred}. We also thank the referees of the first version of this paper for their careful reading and insightful comments.}
	
	Péter Koltai has been partially supported by Deutsche Forschungsgemeinschaft (DFG) through grant CRC 1114 ``Scaling Cascades in Complex Systems'', Project Number 235221301, Project A08 ``Characterization and prediction of quasi-stationary atmospheric states’’. 
	
	The research of Philipp Kunde is part of the project No.\ 2021/43/P/ST1/02885 co-funded by the National Science Centre and the European Union's Horizon 2020 research and innovation programme under the Marie Sklodowska-Curie grant agreement no.\ 945339. He also thanks Freie Universität Berlin for its hospitality during three research visits related to this paper.

	{
		\small
		\bibliographystyle{myalpha}
		\bibliography{references}
	}
	
	\appendix
	
	\section{Proof of Lemma~\ref{lem:weakapprox}}
	\label{app:aux_proofs}
	
	\begin{proof}[Proof of Lemma~\ref{lem:weakapprox}.]
		The statements (1) $\Leftrightarrow$ (3) $\Leftrightarrow$ (4) can also be found in~\cite[p.~62]{Hal17}.\\
		
		(1) $\Leftrightarrow$ (2). Let $A \in\mathfrak{B}$. Since $T$ preserves $\mu$, we have $\mu(T_n A \, \triangle \, T A) = \mu(T^{-1}T_n A \, \triangle \, A)$. Let $B:=T_n A$. We obtain $\mu(T_n A \, \triangle \, T A) = \mu(T^{-1}B \, \triangle \, T_n^{-1}B)$. As $T,T_n$ are measure-preserving essential bijections, $B\in \mathfrak{B}$, and in fact $T \mathfrak{B} = \mathfrak{B} = T_n \mathfrak{B}$ for every~$n$. We thus obtain the equivalence of $T_n \stackrel{w}{\to} T$ and $T_n^{-1} \stackrel{w}{\to} T^{-1}$ as~$n\to\infty$.
		
		(1) $\Rightarrow$ (5). We will show the claim for $i\ge 0$, while $i<0$ then follows with (1) $\Leftrightarrow$~(2). There is nothing to show for $i=0$, let us thus consider $i=1$ first. For arbitrary $\ep>0$ let $g_\ep = \sum_{j=1}^{N_\ep} \alpha_j \mathds{1}_{A_j}$, $A_j\in\mathfrak{B}$, be a simple function with $\| g - g_\ep \|_{L^2_\mu} < \ep$. We note that 
		\begin{equation*}
			\label{eq:simplediff}
			\| g_\ep\circ T - g_\ep \circ T_n \|_{L^2_\mu} \le \sum_{j=1}^{N_\ep} |\alpha_j| \|\mathds{1}_{A_j}\circ T - \mathds{1}_{A_j}\circ T_n\|_{L^2_\mu} =  \sum_{j=1}^{N_\ep} |\alpha_j| \mu(T_n A_j \, \triangle \, T A_j)^{1/2}.
		\end{equation*}
		By assumption, we thus have $\| g_\ep\circ T - g_\ep \circ T_n \|_{L^2_\mu} \to 0$ as $n\to\infty$. Using this and the measure-preserving property of $T,T_n$, we obtain for sufficiently large $n$ that
		\begin{align*}
			\| g\circ T - g\circ T_n \|_{L^2_\mu} &\le \|g\circ T - g_\ep\circ T\|_{L^2_\mu} + \| g_\ep\circ T - g_\ep\circ T_n \|_{L^2_\mu} + \| g_\ep\circ T_n - g\circ T_n\|_{L^2_\mu}\\
			&< \ep+\ep+\ep.
		\end{align*}
		Thus, $g\circ T_n\to g\circ T$ in $L^2_\mu$ as $n\to\infty$. The general statement follows by simple induction, as
		\[
		\| g\circ T^{i+1} - g\circ T_n^{i+1}\|_{L^2_\mu} \le \| g\circ T^i \circ T -g\circ T^i \circ T_n\|_{L^2_\mu} + \| g\circ T^i \circ T_n - g\circ T_n^i \circ T_n \|_{L^2_\mu},
		\]
		and $T_n$ is measure-preserving.
		
		(5) $\Rightarrow$ (3). Trivial by setting~$i=1$.
		
		(3) $\Rightarrow$ (1). Trivial by taking $g$ as characteristic functions.
		
		(3) $\Leftrightarrow$ (4). The first implication is trivial. The second is a straightforward computation, using that $\cU_T,\cU_{T_n}$ are unitary operators.
		
	\end{proof}
	
\end{document}